\documentclass[12pt]{article}
\usepackage[T1]{fontenc}
\usepackage[utf8]{inputenc}
\usepackage{lmodern}
\usepackage{amsmath,amssymb,amsthm,mathtools,bm}
\usepackage{array,booktabs,float}
\usepackage{enumitem}
\usepackage{graphicx}
\graphicspath{{fig/}{Paper1:Meanfunction/fig/}}
\usepackage{microtype}
\usepackage{xcolor}
\usepackage{xurl}
\usepackage[round,authoryear]{natbib}
\usepackage[hidelinks]{hyperref}
\usepackage[nameinlink,capitalize,noabbrev]{cleveref}
\newcommand{\SuppCref}[1]{\Cref*{#1} in the supplementary material}
\newcommand{\SuppLemmaCref}[1]{Lemma~\ref*{#1} in the supplementary material}

\newcommand{\SuppPropositionCref}[1]{Proposition~\ref*{#1} in the supplementary material}

\newtheorem{theorem}{Theorem}[section]
\newtheorem{proposition}[theorem]{Proposition}
\newtheorem{lemma}[theorem]{Lemma}
\newtheorem{corollary}[theorem]{Corollary}
\theoremstyle{definition}

\newtheorem{assumption}[theorem]{Assumption}
\newtheorem{remark}[theorem]{Remark}
\numberwithin{equation}{section}

\crefname{theorem}{Theorem}{Theorems}
\Crefname{theorem}{Theorem}{Theorems}
\crefname{proposition}{Proposition}{Propositions}
\Crefname{proposition}{Proposition}{Propositions}
\crefname{lemma}{Lemma}{Lemmas}
\Crefname{lemma}{Lemma}{Lemmas}
\crefname{corollary}{Corollary}{Corollaries}
\Crefname{corollary}{Corollary}{Corollaries}
\crefname{assumption}{Assumption}{Assumptions}
\Crefname{assumption}{Assumption}{Assumptions}
\crefname{definition}{Definition}{Definitions}
\Crefname{definition}{Definition}{Definitions}
\crefname{remark}{Remark}{Remarks}
\Crefname{remark}{Remark}{Remarks}
\crefname{example}{Example}{Examples}
\Crefname{example}{Example}{Examples}

\newcommand{\E}{\mathbb{E}}
\newcommand{\argmin}{\operatorname*{arg\,min}}
\newcommand{\R}{\mathbb{R}}
\newcommand{\cS}{\mathcal{S}}
\newcommand{\cF}{\mathcal{F}}
\newcommand{\cI}{\mathcal{I}}
\newcommand{\veps}{\varepsilon}
\newcommand{\btheta}{\bm{\theta}}
\newcommand{\bbeta}{\bm{\beta}}
\newcommand{\bA}{\bm{A}}
\newcommand{\bB}{\bm{B}}
\newcommand{\bZ}{\bm{Z}}

\newcommand{\MainCref}[1]{\Cref*{#1}}
\newcommand{\MainEqref}[1]{equation~(\ref*{#1})}
\newcommand{\PP}{\mathbb{P}}
\newcommand{\Var}{\operatorname{Var}}
\newcommand{\Cov}{\operatorname{Cov}}
\newcommand{\norm}[1]{\left\|#1\right\|}
\newcommand{\mnorm}[1]{\left\|#1\right\|_{\max}}
\newcommand{\oinf}[1]{\left\|#1\right\|_{\infty}}

\allowdisplaybreaks

        \date{}
\begin{document}

\def\spacingset#1{\renewcommand{\baselinestretch}%
		{#1}\small\normalsize} \spacingset{1}


		\title{\bf   Mean estimation in spatial functional data 

        }
        \author{Holger Dette \thanks{Department of Mathematic, Ruhr University Bochum Email: 
holger.dette@rub.de.}
  \and
  Qirui Hu\thanks{School of Statistics and Data Science, Shanghai University of Finance and Economics\\ Department of Mathematic, Ruhr University Bochum. Email: huqirui@mail.sufe.edu.cn}}
        \maketitle

	\bigskip

\begin{abstract}
Functional mean estimation exhibits a sparse-to-dense phase transition as the number of measurements per curve increases. For spatially indexed functional data, however, dependence between nearby curves and increasing spatial sampling density fundamentally alter this transition. We investigate this phenomenon under a unified framework allowing both increasing-domain and infill asymptotics, with irregular temporal observations. Our analysis identifies two distinct sources of estimation error: an observation-level component governed by the total number of measurements and a curve-level component governed by a spatial dependence-adjusted effective sample size. The resulting uniform convergence rates reveal how spatial dependence shifts the temporal phase-transition boundary and how spatial infill can improve estimation through additional measurements without a corresponding increase in effective curve-level information. We establish these results for three spline-based estimators: a pooled estimator of a stationary mean curve, a local-kernel estimator, and a tensor-product estimator of a spatially varying mean surface. The effects of spatial localization on the phase transition are characterized explicitly. Simulations illustrate the interplay between spatial dependence and temporal sampling density, and environmental data applications demonstrate the proposed estimation methods.

\end{abstract}

\noindent%
\textit{Keywords:} Spatial functional data analysis; Infill asymptotics; Phase transition; Spatial dependence
\vfill

\newpage
\spacingset{1.1}

\section{Introduction}\label{sec:intro}

The accuracy of functional mean estimation depends critically on the number of measurements available within each curve. With sparse temporal observations, estimation is governed by nonparametric smoothing, whereas sufficiently dense observations allow curve-level variation to become the dominant source of uncertainty. For spatially indexed functional data, this transition is further affected by dependence between curves observed at different locations. In particular, increasing the spatial sampling density may substantially increase the total number of measurements without providing a comparable increase in independent information about the underlying functional process. This raises a fundamental question: {\it How do spatial dependence and spatial sampling density affect the sparse-to-dense transition in functional mean estimation?} The problem arises naturally in environmental monitoring, where temperature, pollution, and water-quality profiles are observed at spatially dependent locations and often at irregular time points.

For independent functional data, the effect of within-curve sampling is well understood.  Sparse longitudinal methods were developed by \citet{YaoMullerWang2005}, and \citet{HallMullerWang2006} studied the role of the sampling plan in functional principal component analysis.  For functional  mean estimation, \citet{CaiYuan2011} established minimax $L^2$-rates and identified the transition to the root-sample-size rate available from complete trajectories.  \citet{ZhangWang2016} provided a unified local-smoothing theory across sparse, semi-dense, and ultra-dense designs.  Related developments include simultaneous inference for dense functional means \citep{CaoYangTodem2012}, optimal sup-norm rates under synchronous designs \citep{BergerHermannHolzmann2025}, non-asymptotic high-dimensional phase transitions \citep{GuoLiQiaoWang2025}, and simultaneous inference across sampling regimes \citep{CaiHu2025mean}.  Together these results show how the number and placement of measurements within a curve determine whether the estimate of mean behaves as a nonparametric or parametric object.

Dependence across curves introduces a second source of information loss.  Functional time-series methods describe linear and weakly dependent processes \citep{Bosq2000,HoermannKokoszka2010}, mean inference \citep{HorvathKokoszka2012,HorvathKokoszkaReeder2013,ChenSong2015,LiYang2023}, frequency-domain structure \citep{PanaretosTavakoli2013}, high-dimensional panels \citep{ZhouDette2023}, and locally stationary processes \citep{vanDelftEichler2018,DetteWu2026}.  Sparse-to-dense transitions have also been studied for structural-break detection and relevant-hypothesis testing in discretely observed functional time series \citep{CaiHu2024FTSBreaks,CaiHu2025RelevantFTS}.  Spatial functional data, however, are indexed by pairing with  multidimensional geometry rather than an ordered temporal lag.  Functional kriging, spatial functional statistics, and spatially dependent principal components have been developed by \citet{DelicadoGiraldoComasMateu2010}, \citet{GiraldoDelicadoMateu2011}, \citet{MartinezHernandezGenton2020}, and \citet{ZhangLi2022}; see also the broader second-generation framework of \citet{KonerStaicu2023}.

While the sparse-to-dense transition is well understood for independent functional observations, much less is known about how it changes under spatial dependence and simultaneous infill and increasing-domain sampling. A central difficulty is that spatial replication generates two different information scales. Additional nearby sites contribute new temporal observations, but their underlying functional trajectories may contain largely redundant information. Consequently, the total number of measurements and the effective number of spatial curves can grow at substantially different rates. As we demonstrate below, this distinction changes the temporal sampling density at which the leading estimation error switches from an observation-limited to a curve-limited regime. Understanding and quantifying this shift is the principal objective of the present paper.

At sites $S_i = (S_{i,1},\dots, S_{i,d})^\top \in\R^d$, $1\le i\le m_n$, and times $T_{ij}\in\left[0,1\right]$, we observe
\begin{equation}\label{eq:model}
Y_{ij}=\eta\left(S_i,T_{ij}\right)+\sigma\left(S_i,T_{ij}\right)\veps_{ij},
\qquad 1\le i\le m_n,\quad 1\le j\le N_i,
\end{equation}
where $\eta\left(s,t\right)=m\left(s,t\right)+X\left(s,t\right)$ and $\E\left\{X\left(s,t\right)\right\}=0$.  The random field $X$ represents spatially dependent curve-to-curve variation and $\veps_{ij}$ is an independent measurement error.  We consider both a stationary mean $m(t)=m(s,t)$, shared by all sites, and a locally stationary mean surface $M(u,t)$, where $M(u,t)$ approximates the spatially
varying mean $m(s,t)$ after appropriately rescaling the expanding spatial domain to the cube $[0,1]^d$; see \cite{Kurisu2022}.
 This decomposition separates
spatially dependent curve-level variation from observation-level variation generated
by random sampling times and measurement errors, and this distinction is reflected
explicitly in the convergence rates.

Our main contribution is to characterize how spatial dependence and the spatial sampling density affect the sparse-to-dense transition in functional mean estimation. We establish dependence-adjusted uniform convergence rates that reveal two distinct information scales. The observation-level component is governed by the nominal number of sampled measurements, whereas the curve-level component is governed by a dependence-adjusted effective sample size \(m_{\mathrm{eff},n}=m_n/\Theta_n\), where \(\Theta_n\) quantifies the inflation caused by spatial dependence. The balance between these two components determines the temporal sampling density at which the dominant source of estimation error changes. Under increasing-domain sparse-space sampling, the effective sample size is of the same order as the nominal number of curves, recovering the familiar sparse-to-dense transition. Under spatial infill with persistent local dependence, however, the effective sample size is governed by the spatial domain volume and can therefore be substantially smaller than the nominal number of curves. Consequently, additional spatial observations may continue to improve the observation-level component while contributing comparatively little independent curve-level information. This shifts the temporal phase-transition boundary and reveals an interaction between spatial and temporal sampling that cannot be captured by simply replacing the nominal sample size with an effective sample size. For spatially varying means, spatial localization additionally affects the transition through the kernel-window volume or the spatial-spline resolution.

To develop this theory for different mean structures, we consider three spline-based
estimators. A pooled least-squares estimator recovers the stationary mean, a local-kernel
estimator recovers the mean curve at an interior spatial point, and a tensor-product
spatial spline estimates the full mean surface. Their linear representations reduce
the stochastic analysis to empirical Gram matrices and spatially dependent site-level
scores and allow the same distinction between observation-level and curve-level
information to be traced across the three estimation problems.

We complement the theoretical results by a simulation study that varies sample size,
spatial geometry and  temporal sampling density, together with
additional distributional robustness experiments. Three environmental applications
illustrate the estimators under regular-grid, irregular-network, and highly time-sparse
sampling designs.

\section{Statistical framework and methodology}\label{sec:problem}\label{sec:methodology}

We consider the model \eqref{eq:model} and assume that the spatial locations $S_i$ are fixed and the within-site sample sizes $N_i$  are treated as fixed whereas the observation times $T_{ij}$  and the measurement errors $\varepsilon_{ij}$ are random. { For each fixed site, $(\omega,t)\mapsto X(s,t;\omega)$ is jointly measurable with respect to the $\sigma$-field generated by that curve and the Borel sets of $[0,1]$.}
More specifically, we make the following assumptions regarding the measurement locations and errors in model \eqref{eq:model}.

\begin{assumption} [Time design]
\label[assumption]{ass:time}
The $N_i$ define the number of measurements of each location.  The variables $\left\{T_{ij}:1\le i\le m_n,1\le j\le N_i\right\}$  are i.i.d.\ with density $f_T$ on $\left[0,1\right]$ satisfying $0<c_f\le f_T\left(t\right)\le C_f<\infty$ for all $t\in\left[0,1\right]$, and the entire collection of time points is independent of the latent random field $X$.  The errors $\veps_{ij}$ are i.i.d., centered, variance one, and independent of the latent field $X$ and of the $\{T_{ij}\}$.
\end{assumption}

\begin{assumption}[Infill-increasing spatial design]\label[assumption]{ass:design}
For each $n$, let $L_{1,n}, \dots , L_{d,n}>0$ and  $\cS_n=\prod_{r=1}^d\left[0,L_{r,n}\right]$, the spatial domain with volume, $V_n=|\cS_n|=\prod_{r=1}^dL_{r,n}$, and $L_n=V_n^{1/d}$.  For $\delta_n>0$, partition $\R^d$ into cubes $\Delta_{n,\ell}=\prod_{r=1}^d\left[\ell_r\delta_n,\left(\ell_r+1\right)\delta_n\right)$, $\ell = (\ell_1,\dots, \ell_d) \in\mathbb Z^d$, and set $\cI_n=\left\{\ell\in\mathbb Z^d:\Delta_{n,\ell}\cap\cS_n\ne\varnothing\right\}$.  Each $\ell\in\cI_n$ contains exactly one spatial location $S_\ell\in\Delta_{n,\ell}\cap\cS_n$.  Thus $\left\{S_i\right\}_{i=1}^{m_n}=\left\{S_\ell:\ell\in\cI_n\right\}$ and $m_n=|\cI_n|\asymp V_n\delta_n^{-d}$.  
\end{assumption}

Note that $V_n$ measures the size of the observation domain, whereas $\delta_n$ controls the spatial sampling resolution.
Throughout the asymptotic theory, we consider  an increasing-domain, that is
$V_n\to\infty$, and distinguish two spatial regimes:   
(i) if $\delta_n\asymp1$  we refer to this as an 
    {\it  sparse space} regime; 
(ii) if $\delta_n\to0$ we refer to this as a {\it dense space} regime.

\begin{assumption}[Full-dimensional domain growth]\label[assumption]{ass:full-dimensional}
There exist constants $0<c_L<C_L<\infty$ such that
$c_L\le L_{r,n}/L_n\le C_L$ for every $1\le r\le d$ and all sufficiently large $n$.
\end{assumption}

Let $M_{1,n}=\sum_{i=1}^{m_n}N_i$ denote the number of all observations and $M_{2,n}=\sum_{i=1}^{m_n}N_i\left(N_i-1\right)$, and set $\bar N_{n}=M_{1,n}/m_n$, and  $\bar N_{2,n}=M_{2,n}/m_n$.
Thus $\bar N_n$ is the average number of observations per curve; its growth relative to the dependence-adjusted effective size determines the time-sampling regime.
For the estimation of the mean function $m : \mathbb R^d \times [0,1] \to \mathbb R$, defined by  $m(s,t) = \mathbb E [ \eta (s,t)] $,   we will distinguish the cases of (spatial) {\it stationarity} , that is 
 $m(s,t) =m(t)$ (for all $s$) 
 and a {\it locally stationary mean} that can be approximated, after rescaling the spatial domain, by a smooth surface $M(u,t)$, see \cite{Kurisu2022}.

Our approach is  based on  a B-spline estimator. More specifically,  we fix an integer   $p\ge2$, and let $\bB_J\left(t\right)=\left(B_{1,J}\left(t\right),\ldots,B_{J+p,J}\left(t\right)\right)^\top$ denote the normalized B-spline basis of order $p$ on a quasi-uniform partition of the interval $\left[0,1\right]$ with $J$ interior knots.   
Throughout this paper, $J_m=J_{m_n}$, depends on $m_n$, denotes the time-direction spline resolution, and we use the notation $\bB_m\left(t\right)=\bB_{J_m}\left(t\right)$ where the dimension of this vector is $K_m = J_m +p$. For further details on B-spline theory and its use in regression, see, for example, \citet{deBoor2001}.

\subsection{Stationary mean}\label{sec:method-stationary}
In this section, we consider the model (\ref{eq:model}) with a stationary mean, that is $m\left(s,t\right)=m\left(t\right)$, for all $s \in \R^d$. The spatial locations do not enter the target, so observations from all sites can be pooled to estimate a single temporal curve.

  Accordingly, pooling all observations gives the least-squares projection onto the time-spline space, that is 
\begin{equation}\label{eq:mean-est}
\hat m_{J_m}\left(t\right)=\bB_m\left(t\right)^\top \hat\bbeta_{m,J_m},
\end{equation}
where 
\begin{equation*}
\hat\bbeta_{m,J_m}:=\argmin_{\bbeta\in\mathbb R^{K_m}}
\sum_{i=1}^{m_n}\sum_{j=1}^{N_i}\left\{Y_{ij}-\bB_m\left(T_{ij}\right)^\top\bbeta\right\}^2.
\end{equation*}
Equivalently, $\hat\bbeta_{m,J_m}=\hat Q_{m,J_m}^{-1}\hat r_{m,J_m}$, where the  empirical Gram  matrix $\hat Q_{m,J_m}$ and the vector  $\hat r_{m,J_m}$ are given by
\begin{equation*}
\hat Q_{m,J_m}:=\frac1{M_{1,n}}\sum_{i=1}^{m_n}\sum_{j=1}^{N_i}\bB_m\left(T_{ij}\right)\bB_m\left(T_{ij}\right)^\top,
\qquad
\hat r_{m,J_m}:=\frac1{M_{1,n}}\sum_{i=1}^{m_n}\sum_{j=1}^{N_i}\bB_m\left(T_{ij}\right)Y_{ij}.
\end{equation*}
The corresponding population projection is
given by $
m_{J_m}\left(t\right):=\bB_m\left(t\right)^\top\bbeta_{m,J_m},$  
where
\begin{align*}
\bbeta_{m,J_m}:=Q_{m,J_m}^{-1}r_{m,J_m}~,~~
Q_{m,J_m}:=\E\left\{\bB_m\left(T\right)\bB_m\left(T\right)^\top\right\}~,~~
r_{m,J_m}:=\E\left\{\bB_m\left(T\right)m\left(T\right)\right\},
\end{align*}
and $T$ has the density $f_T$ specified in \Cref{ass:time}. 
This projection satisfies the orthogonality condition
\begin{equation}\label{eq:mean-orthogonality}
\E\left[\bB_m\left(T\right)\left\{m\left(T\right)-m_{J_m}\left(T\right)\right\}\right]=0.
\end{equation}
Note that the normal equations give an exact centering for the stochastic expansion.  More precisely, with the notation
\begin{equation*}
\xi_{i,J_m}:=\sum_{j=1}^{N_i}\bB_m\left(T_{ij}\right)\left\{Y_{ij}-m_{J_m}\left(T_{ij}\right)\right\}\in\mathbb R^{K_m}
\end{equation*}
we have, by Assumption \ref{ass:time} and (\ref{eq:mean-orthogonality}), $\E \xi_{i,J_m}= 0$, and 
\begin{equation}\label{eq:mean-score-avg}
\hat r_{m,J_m}-\hat Q_{m,J_m}\bbeta_{m,J_m}
= M_{1,n}^{-1}\sum_{i=1}^{m_n}\xi_{i,J_m}.
\end{equation}

The estimator \eqref{eq:mean-est} pools data from all sites and smooths the mean over time $t$.  The number $J_m$ (of interior knots) controls bias-variance trade-off: a larger value of $J_m$ value of reduces the bias but increases the variance.   More observations within a curve reduce observation-level variation. Once the curves are sampled sufficiently densely in time, however, the dominant uncertainty arises from the latent curve-level variation, for which spatial dependence reduces the effective amount of replication. Section \ref{sec:theory} makes this distinction more precise.

\subsection{Locally stationary mean}\label{sec:local-stationarity-assumptions}
In this  section, we consider the problem of estimating the spatially varying mean surface $M(u,t)$ without assuming spatial stationarity of the mean. 
When the mean changes gradually over space, pooling all sites would obscure spatial heterogeneity.  We instead represent the mean on a rescaled domain by a  smooth function $M:\left[0,1\right]^d\times\left[0,1\right]\to\R$, that is
\begin{align} \label{hol5}
  \sup_{s\in\cS_n,t\in\left[0,1\right]}|m\left(s,t\right)-M\left(U\left(s\right),t\right)|\le CL_n^{-1} , 
\end{align}
where $U\left(s\right)=\left(s_1/L_{1,n},\ldots,s_d/L_{d,n}\right)^\top$.
This approximation is in the spirit of local-stationarity conditions for spatial regression and random fields \citep{Kurisu2022,LahiriMcElroyWeinberg2025}. It  allows the mean function to depend on the spatial domain and hence implicitly on \(n\). In particular, it  describes a sequence of spatially varying mean surfaces that becomes asymptotically representable by a fixed function \(M\) on the rescaled domain.
For a detailed formulation of the assumption (which also describes the condition on the temporal smoothness precisely) we refer to  \cref{ass:local-mean} in the following section.

The stationary and local settings share the difficulties caused by irregular
observations over time and dependence across sites. Estimation of a spatially
varying mean additionally requires smoothing in the spatial direction. We consider
two approaches for this purpose. The (spatial) local-kernel estimator introduced in \cref{sec:method-local} uses observations from
sites in a neighborhood of a specified spatial location, with a bandwidth $a_n$
controlling the degree of localization. The spatial-spline estimator discussed in \cref{sec:method-spatial-spline} instead represents
the entire mean surface by a tensor-product spline, with the spatial resolution $J_s$
controlling its flexibility. Accordingly, the convergence rates derived in the following Section~\ref{sec:theory} reflect the different spatial resolutions of the two procedures: localization through the bandwidth $a_n$
for the kernel estimator and through  $J_s$ for the
spatial-spline estimator.


\subsubsection{Local-kernel estimator}\label{sec:method-local}
We write $U_i=U\left(S_i\right)=\left(S_{i1}/L_{1,n},\ldots,S_{id}/L_{d,n}\right)^\top \in [0,1]^d $ and fix an interior point $u_0\in (0,1)^d$.  Let $K:\mathbb R^d\to\left[0,\infty\right)$ be a bounded Lipschitz kernel supported on $\left[-1,1\right]^d$, let $a_n\downarrow0$ denote a bandwidth, and define the weights $W_i\left(u_0\right):=K\left\{\left(U_i-u_0\right)/a_n\right\}$.  Weighting sites within an $a_n$-neighborhood of $u_0$ gives  the kernel estimator
\begin{align}
    \label{hol3} 
    \hat M_{J_m}\left(u_0,t\right)=\bB_m\left(t\right)^\top\hat\bbeta_{m,J_m}\left(u_0\right),
\end{align}
 where
\begin{equation} \label{hol4}
\hat\bbeta_{m,J_m}\left(u_0\right)=\argmin_{\bbeta\in\mathbb R^{K_m}}
\sum_{i=1}^{m_n}W_i\left(u_0\right)\sum_{j=1}^{N_i}
\left\{Y_{ij}-\bB_m\left(T_{ij}\right)^\top\bbeta\right\}^2.
\end{equation}
Let $M_{1,n}\left(u_0\right):=\sum_{i=1}^{m_n}W_i\left(u_0\right)N_i$.  Then $\hat\bbeta_{m,J_m}\left(u_0\right)=\hat Q_{m,J_m}\left(u_0\right)^{-1}\hat r_{m,J_m}\left(u_0\right)$, where
\begin{align*}
\hat Q_{m,J_m}\left(u_0\right)&:=\frac{1}{M_{1,n}\left(u_0\right)}
\sum_{i=1}^{m_n}W_i\left(u_0\right)\sum_{j=1}^{N_i}\bB_m\left(T_{ij}\right)\bB_m\left(T_{ij}\right)^\top,\\
\hat r_{m,J_m}\left(u_0\right)&:=\frac{1}{M_{1,n}\left(u_0\right)}
\sum_{i=1}^{m_n}W_i\left(u_0\right)\sum_{j=1}^{N_i}\bB_m\left(T_{ij}\right)Y_{ij}.
\end{align*}
Thus, the local estimator has the same temporal spline structure as the stationary
estimator (\ref{eq:mean-est}), but only observations from sites receiving positive kernel weight contribute
to the fit. The bandwidth $a_n$ therefore determines both the spatial smoothing bias
and the amount of spatial information available for estimating $M(u_0,\cdot)$.

\subsubsection{Tensor-product spatial-spline estimator}\label{sec:method-spatial-spline}
When the full spatial surface is of interest, a global tensor-product spline borrows information smoothly across all sites and avoids fitting a separate local model at each location.  Let $\bA_s\left(u\right)$ be a normalized tensor-product B-spline basis constructed from quasi-uniform partitions in the $d$ spatial coordinates, with $J_s = J_{s_n}$ subintervals in each coordinate and fixed order $p_s$.  Its dimension is $K_s\asymp J_s^d$.  With $\bZ_{J_s,J_m}\left(u,t\right):=\bA_s\left(u\right)\otimes\bB_m\left(t\right)\in\mathbb R^{K_sK_m}$, define
\begin{align}
    \label{hol6}
\hat M^{\mathrm{sp}}_{J_s,J_m}\left(u,t\right)=\bZ_{J_s,J_m}\left(u,t\right)^\top\hat\btheta_{M,J_s,J_m},
\end{align}
where
\begin{equation*}
\hat\btheta_{M,J_s,J_m}:=\argmin_{\btheta\in\mathbb R^{K_sK_m}}
\sum_{i=1}^{m_n}\sum_{j=1}^{N_i}
\left\{Y_{ij}-\bZ_{J_s,J_m}\left(U_i,T_{ij}\right)^\top\btheta\right\}^2.
\end{equation*}
The empirical Gram matrix and empirical moment vector are given by  
\begin{align*}
\hat Q_{M,J_s,J_m}&:=\frac1{M_{1,n}}\sum_{i=1}^{m_n}\sum_{j=1}^{N_i}
\bZ_{J_s,J_m}\left(U_i,T_{ij}\right)\bZ_{J_s,J_m}\left(U_i,T_{ij}\right)^\top,
\\
\hat r_{M,J_s,J_m}&:=\frac1{M_{1,n}}\sum_{i=1}^{m_n}\sum_{j=1}^{N_i}
\bZ_{J_s,J_m}\left(U_i,T_{ij}\right)Y_{ij},
\end{align*}
respectively, 
and the corresponding population quantities are given by
\begin{align*}
Q_{M,J_s,J_m}&:=\frac1{M_{1,n}}\sum_{i=1}^{m_n}N_i
\int_0^1\bZ_{J_s,J_m}\left(U_i,x\right)\bZ_{J_s,J_m}\left(U_i,x\right)^\top f_T\left(x\right)\,dx,\\
r_{M,J_s,J_m}&:=\frac1{M_{1,n}}\sum_{i=1}^{m_n}N_i
\int_0^1\bZ_{J_s,J_m}\left(U_i,x\right)m\left(S_i,x\right)f_T\left(x\right)\,dx,
\end{align*}
with
\begin{equation*}
\btheta_{M,J_s,J_m}^{\circ}:=Q_{M,J_s,J_m}^{-1}r_{M,J_s,J_m},
\qquad
M_{J_s,J_m}^{\circ}\left(u,t\right):=\bZ_{J_s,J_m}\left(u,t\right)^\top\btheta_{M,J_s,J_m}^{\circ}.
\end{equation*}
Note that the population projection $M^\circ_{J_s,J_m}$ is defined in terms of the
actual mean values $m(S_i,\cdot)$ at the sampled sites, rather than the approximating
surface $M(U_i,\cdot)$. This choice yields an exact centering of the sample normal
equations. The difference between $m(S_i,t)$ and $M(U_i,t)$ is treated separately
as the deterministic local-stationarity approximation error.

The defining population normal equation is given by 
\begin{equation*}
\sum_{i=1}^{m_n}N_i\int_0^1
\bZ_{J_s,J_m}\left(U_i,x\right)
\left\{m\left(S_i,x\right)-M_{J_s,J_m}^{\circ}\left(U_i,x\right)\right\}f_T\left(x\right)\,dx=0, 
\end{equation*}
and the centered site score for the joint fit is defined as 
\begin{equation*}
\omega_{i,J_s,J_m}:=\sum_{j=1}^{N_i}\bZ_{J_s,J_m}\left(U_i,T_{ij}\right)\left\{Y_{ij}-M_{J_s,J_m}^{\circ}\left(U_i,T_{ij}\right)\right\}-\E_i,
\end{equation*}
where $ \mathbb E_i
= \mathbb{E}\big [
\sum_{j=1}^{N_i}
\bZ_{J_s,J_m}(U_i,T_{ij}) \big \{
Y_{ij}-M^\circ_{J_s,J_m}(U_i,T_{ij})
\big \} \big ].
$
The population normal equation and \Cref{ass:time} give $\sum_i\E_i=0$, matching the aggregate centered score to the sample normal equations.

\section{Theoretical guarantees}\label{sec:theory}

In this section we demonstrate that the methodology developed in
Section~\ref{sec:methodology} provides consistent estimators in the different
scenarios. We are particularly interested in uniform convergence rates and the effects of sparse and dense sampling in the spatial and temporal
domain. For this purpose, we first introduce several conditions on the mean
functions and the sampling framework that are required to derive uniform
convergence rates for all three estimators introduced in Section 2. Assumptions specific to the
estimation of a stationary mean or to a locally stationary setting are stated
in the corresponding subsections below.

The rates presented in this section have a common structure. In all three
estimation problems, the stochastic error separates into an observation-level
component, governed by the nominal number of measurements, and a curve-level
component, governed by the dependence-adjusted effective sample size
$m_{\mathrm{eff},n}$ defined in~\eqref{meff} below. The assumptions introduced next
provide the dependence, tail, and smoothness conditions needed to establish
this decomposition.

For an integer $q\ge0$ and $\nu\in\left(0,1\right]$, let $C^{q,\nu}\left[0,1\right]$ denote the H\"older class with smoothness index $q+\nu$ and norm
\begin{align*}
\|f\|_{C^{q,\nu}}=\sum_{\ell=0}^q\|f^{\left(\ell\right)}\|_\infty+\sup_{x\ne y}{ |f^{\left(q\right)}\left(x\right)-f^{\left(q\right)}\left(y\right)| \over |x-y|^\nu } .
\end{align*}
For our theoretical analysis, we  combine a deterministic infill-increasing design with polynomial beta-mixing.  The design is a fixed-site analogue of the increasing-domain and infill regimes in spatial statistics considered, for example, in \citet{Lahiri2003} or \citet{JenishPrucha2009}; the mixing condition supports the covariance bounds \citep{Rio1993,Bradley2005}  used in the proofs of our main results.

For a Borel set $A\subset\mathbb R^d$, let $\cF_X\left(A\right)$ denote the $\sigma$-field generated by $\left\{X\left(s,\cdot\right):s\in A\right\}$. {The local model may depend on $n$, with this index suppressed and all constants uniform in $n$. For $r\ge0$, we define the pairwise spatial beta-mixing coefficients by
\begin{align*}
\beta_X\left(r\right)&:=\sup_n\sup_{s\ne s'}\left\{
\beta\left\{\cF_X\left(\{s\}\right),\cF_X\left(\{s'\}\right)\right\}:
\|s-s'\|_2\ge r
\right\},
\end{align*}}
where, for sub-$\sigma$-fields $\mathcal A,\mathcal B$, the coefficient $\beta\left(\mathcal A,\mathcal B\right)$  is defined by 
\begin{align*}
\beta\left(\mathcal A,\mathcal B\right)
&:=\frac12\sup_{\{A_u\},\{B_v\}}
\sum_{u,v}\left|\Pr\left(A_u\cap B_v\right)-\Pr\left(A_u\right)\Pr\left(B_v\right)\right|
\end{align*}
and the supremum is taken  over all finite partitions $\{A_u\}$ and $\{B_v\}$ of the underlying sample space with $A_u\in\mathcal A$ and $B_v\in\mathcal B$. Our next assumption refers to the decay rate of the spatial beta-mixing coefficients.

\begin{assumption}[Polynomial beta-mixing]\label[assumption]{ass:beta}
There exist constants $C_\beta>0$ and $\alpha_\beta>2d$ such that $\beta_X\left(r\right)\le C_\beta\left(1+r\right)^{-\alpha_\beta}$ for $r\ge0$.
\end{assumption}
To measure the effect of spatial dependence  we introduce the coefficient 
\begin{equation}\label{eq:Theta}
\Theta_n:=1+\sup_{1\le i\le m_n}\sum_{k\ne i}\beta_X\left(\|S_i-S_k\|_2\right)^{1/2},
\end{equation}
and call 
\begin{equation}\label{meff}
m_{\mathrm{eff},n}:=m_n/\Theta_n
\end{equation}
the {\it (dependence-adjusted) effective sample size}.
Spatial dependence reduces the amount of independent information contained in the
$m_n$ sampled curves. The factor $\Theta_n$ quantifies this redundancy, so that
$m_{\mathrm{eff},n}=m_n/\Theta_n$
provides a dependence-adjusted measure of the effective number of curves for the purpose of our convergence-rate analysis; see \SuppLemmaCref{lem:cov-sum} for more details. Thus, adding nearby sites may
contribute much less information than adding the same number of well-separated
sites. By contrast, independently sampled observation times and measurement errors
still average over the actual observations, so this part of the estimation error
benefits from the nominal sampling scale. This distinction leads to the two
information scales appearing in the convergence rates presented below. Under \Cref{ass:design,ass:beta}, \SuppLemmaCref{lem:Theta-order} gives $m_{\mathrm{eff},n}\gtrsim V_n$.  In the case of a sparse space regime, we have $m_n\asymp m_{\mathrm{eff},n} \asymp V_n$, whereas in the dense-space infill regime, we have $m_{\mathrm{eff},n}\asymp V_n$ under \Cref{ass:full-dimensional} and a short-range nonindependence condition (see  \SuppPropositionCref{prop:Theta-lower} for details)

The next assumptions describe irregular time sampling, tails, and smoothness, paralleling the independent-curve framework of \citet{CaiYuan2011} and \citet{ZhangWang2016}.

\begin{assumption}[Uniform tails]\label[assumption]{ass:tails}
There exists a constant $K_0>0$ such that
\begin{align*}
\sup_{s\in\R^d,t\in\left[0,1\right]}\|X\left(s,t\right)\|_{\psi_2}&\le K_0,&
\sup_{i,j}\|\veps_{ij}\|_{\psi_2}&\le K_0,&
\sup_{s,t}|\sigma\left(s,t\right)|&\le K_0.
\end{align*}
Moreover, we assume that the finite-dimensional distributions of the centered field are uniformly sub-Gaussian relative to their variance: for every $r\ge1$, every deterministic collection $(s_\ell,t_\ell)_{\ell=1}^r$, and every vector $(a_1,\ldots,a_r)^\top\in\R^r$,
\begin{equation}\label{eq:joint-subGaussian-field}
\left\|\sum_{\ell=1}^r a_\ell X(s_\ell,t_\ell)\right\|_{\psi_2}
\le K_0
\left[\operatorname{Var}\left\{\sum_{\ell=1}^r a_\ell X(s_\ell,t_\ell)\right\}\right]^{1/2}.
\end{equation}

\end{assumption}

\begin{assumption}[Balanced within-site counts]\label[assumption]{ass:balanced}
There exist constants $0<c_N<C_N<\infty$ such that $c_N\bar N_n\le N_i\le C_N\bar N_n$ for every $1\le i\le m_n$, where $\bar N_n=M_{1,n}/m_n$.
\end{assumption}

\begin{assumption}[Temporal mean smoothness]\label[assumption]{ass:mean-smooth}
In the stationary setting, the mean function $m$ is an element of $C^{q_m,\nu_m}\left[0,1\right]$; in the locally stationary setting, the function $t\mapsto M\left(u,t\right)$ is an element of $C^{q_m,\nu_m}\left[0,1\right]$ for all   $u\in\left[0,1\right]^d$.  The spline order satisfies $p\ge q_m^\ast+1$, where $q_m^\ast=q_m+\nu_m>0$.
\end{assumption}

\Cref{ass:time,ass:tails,ass:balanced,ass:mean-smooth} collect familiar regularity conditions for irregularly sampled functional data.  Random exogenous observation times with a density bounded above and away from zero, comparable within-curve sample sizes, and smooth mean functions have close counterparts in sparse-to-dense functional mean estimation \citep{YaoMullerWang2005,CaiYuan2011,ZhangWang2016}.  The covariance-relative finite-dimensional condition in \eqref{eq:joint-subGaussian-field} is stronger than the preceding pointwise $\psi_2$ bound because it controls every deterministic linear combination.  It holds automatically for Gaussian fields and parallels covariance-scaled sub-Gaussian conditions for Hilbert-space-valued functional observations \citep{GuoLiQiaoWang2025}. The balance condition is used to express the phase-transition rates through $\bar N_n$ and to simplify the variance terms for the local-kernel and spatial-spline estimators.

\subsection{Stationary mean}\label{sec:theory-stationary}
We first establish uniform convergence for the pooled stationary-mean estimator  \eqref{eq:mean-est} in the situation, when the mean function does not depend on the spatial locations. For this purpose we require a further assumption.

\begin{assumption}[Mean leverage condition]\label[assumption]{ass:mean-leverage}
The site counts satisfy $\max_{1\le i\le m_n}N_i=O\left(M_{1,n}/m_n\right)$.
\end{assumption}

\Cref{ass:mean-leverage} is equivalent to $\max_{i=1}^{m_n} N_i/\bar N_n=O\left(1\right)$ and  prevents individual sites from contributing a disproportionately large fraction of the total observations. Unlike Assumption \ref{ass:balanced}, it does not impose a uniform lower bound on the within-site sample sizes. The condition is not essential for obtaining a convergence rate, but it simplifies the representation of the rates.
For a family of $p_n$ score coordinates, write $b_n=(e+p_n+m_{\mathrm{eff},n})^{2/\alpha_\beta}$.

\begin{theorem}\label[theorem]{thm:mean-rate}
Suppose that $m(s,t)\equiv m(t)$ and that Assumptions \ref{ass:time}, \ref{ass:design}, \ref{ass:beta},  \ref{ass:tails}, \ref{ass:mean-smooth} and \ref{ass:mean-leverage} are satisfied.  Take $p_n=K_m$ in the definition of $b_n$.  Let $J_m\to\infty$ and suppose $J_m[\{\log K_m/(M_{1,n}J_m)\}^{1/2}+\log K_m/M_{1,n}]\to0$.  Then estimator (\ref{eq:mean-est}) satisfies that
\begin{align}
\sup_{t\in\left[0,1\right]}|\hat m_{J_m}\left(t\right)-m\left(t\right)|
&=
O_p\left(
J_m^{-q_m^\ast}+
\sqrt{\log K_m\left[
\frac{J_m}{M_{1,n}}+
\frac{M_{2,n}+(\Theta_n-1)(M_{1,n}+M_{2,n})}{M_{1,n}^2}
\right]}
\right.
\nonumber\\
&\left.\hspace{7em}+
\frac{J_m\delta_n^{-d} b_n^d\log K_m}{m_n}
\right).
\label{eq:mean-main-rate}
\end{align}
\end{theorem}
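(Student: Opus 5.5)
The plan is to use the exact linear representation coming from the normal equations. Write
\[
\hat m_{J_m}(t)-m(t)=\{m_{J_m}(t)-m(t)\}+\bB_m(t)^\top \hat Q_{m,J_m}^{-1}M_{1,n}^{-1}\sum_{i=1}^{m_n}\xi_{i,J_m},
\]
which follows from \eqref{eq:mean-score-avg}. The first term is deterministic. Under \Cref{ass:mean-smooth} and $f_T$ bounded above and below, the weighted $L^2(f_T)$ spline projection is $L^\infty$-stable: the inverse of the banded Gram matrix has exponentially decaying entries (Demko's theorem / de Boor). Hence $\sup_t|m_{J_m}-m|\lesssim \inf_{g}\|g-m\|_\infty\lesssim J_m^{-q_m^\ast}$. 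For the stochastic term I use $\|\bB_m(t)\|_1\le 1$, so the supremum over $t$ is bounded by $\|\hat Q^{-1}_{m,J_m}\|_{\infty}\,\|M_{1,n}^{-1}\sum_i\xi_{i,J_m}\|_{\infty}$.

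The first step is to control the Gram matrix. The $T_{ij}$ are i.i.d., so $\hat Q_{m,J_m}-Q_{m,J_m}$ is a sum of $M_{1,n}$ independent banded matrices. Each entry has variance $\lesssim J_m^{-1}$ and is bounded by $1$. Bernstein's inequality plus a union bound over the $O(K_m)$ nonzero entries gives
\[
\mnorm{\hat Q-Q}=O_p\bigl(\sqrt{\log K_m/(M_{1,n}J_m)}+\log K_m/M_{1,n}\bigr).
\]
Since $Q\asymp J_m^{-1}$ times a well-conditioned banded matrix, with $\oinf{Q^{-1}}\lesssim J_m$, the stated condition $J_m[\cdots]\to0$ yields $\oinf{\hat Q^{-1}}=O_p(J_m)$ by a Neumann series argument in the banded $\infty$-norm.

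The second step is to bound each score coordinate. Split $\xi_{i,J_m}$ into an error part $\sum_j B_k(T_{ij})\sigma\veps_{ij}$, a part $\sum_j B_k(T_{ij})\{m-m_{J_m}\}(T_{ij})$, which is centered and of negligible size, and a curve part $\sum_j B_k(T_{ij})X(S_i,T_{ij})$. Condition on $X$ and further split the curve part into its conditional mean $N_i\int B_kXf_T$ and the fluctuation of the i.i.d. times. The error term and the time fluctuation are independent across all observations given $X$. Their variance is $\lesssim M_{1,n}J_m^{-1}$, and sub-Gaussian/Bernstein bounds give the contribution $J_m\cdot M_{1,n}^{-1}\sqrt{M_{1,n}J_m^{-1}\log K_m}=\sqrt{J_m\log K_m/M_{1,n}}$.

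The genuinely curve-level part $\sum_i N_i\int B_k X(S_i,\cdot)f_T$ is handled next. Its diagonal variance is $\lesssim J_m^{-2}\sum_i N_i(N_i-1)$ plus a diagonal piece absorbed into $M_{1,n}$. For the cross-site covariances I will use the covariance inequality for beta-mixing with sub-Gaussian moments (Rio/Davydov type, $|\Cov|\lesssim\beta^{1/2}$ times $\psi_2$ norms). Summing over $k\ne i$ through $\Theta_n$ and \Cref{ass:mean-leverage} then gives variance $\lesssim J_m^{-2}(\Theta_n-1)(M_{1,n}+M_{2,n})$, consistent with \SuppLemmaCref{lem:cov-sum}. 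Multiplying by $J_m/M_{1,n}$ yields the middle square-root term.

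The main obstacle is turning these variance bounds into a \emph{maximal} inequality over $K_m$ dependent coordinates, which is where the $\sqrt{\log K_m}$ factor and the remainder $J_m\delta_n^{-d}b_n^d\log K_m/m_n$ come from. I would use Berbee's coupling with spatial blocking. Partition $\cS_n$ into cubes of side $b_n$. Then $b_n^{\alpha_\beta}\ge p_n+m_{\mathrm{eff},n}$ makes the total coupling error $m_n\beta_X(b_n)$ negligible after union bounding over $p_n=K_m$ coordinates. Each block contains $\lesssim\delta_n^{-d}b_n^d$ sites, so after coupling, independent (or alternating-independent) block sums are handled by Bernstein's inequality. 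The joint sub-Gaussian condition \eqref{eq:joint-subGaussian-field} ensures that block sums are sub-Gaussian with their true variances. The Gaussian part reproduces the variance term above, while the Bernstein range part, with per-block range $\lesssim\delta_n^{-d}b_n^d\bar N_nJ_m^{-1}$ up to logs, times $J_m/M_{1,n}$, gives the last term of \eqref{eq:mean-main-rate}. The delicate points are verifying that the coupling error is uniform in $k$, and that the blocking does not destroy the $\Theta_n$-based variance bound. To handle the latter, I would bound block variances directly through \SuppLemmaCref{lem:cov-sum} rather than through the block count.
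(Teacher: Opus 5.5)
Your bias, Gram-matrix and variance steps match the paper's proof. The gap is in the maximal inequality for the curve-level part.

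\textbf{Blocking is not available.} Berbee's coupling over spatial blocks needs $\beta$-coefficients between $\sigma$-fields generated by whole blocks of sites, for instance $\beta\{\cF_X(A),\cF_X(B)\}$ for a block $A$ and the union $B$ of the other blocks in its group. \Cref{ass:beta} only bounds the pairwise coefficients $\beta\{\cF_X(\{s\}),\cF_X(\{s'\})\}$ between two single sites, and such bounds do not control set-indexed coefficients. So the coupling step cannot be carried out under the theorem's hypotheses. Your accounting of the coupling error is also off: $m_n\beta_X(b_n)\lesssim\Theta_n/(e+p_n+m_{\mathrm{eff},n})$ need not vanish under infill, where $\Theta_n\asymp\delta_n^{-d}$ can dominate $V_n$; only the number of blocks should enter.

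\textbf{Blocking is also unnecessary.} The covariance-relative condition \eqref{eq:joint-subGaussian-field}, which you invoke only for block sums, applies to the full sum. First extend it from finite point evaluations to the integrals $G_k=\sum_iN_i\int B_{k,m}(t)X(S_i,t)f_T(t)\,dt$. Approximate each integral by averages over auxiliary i.i.d.\ draws from $f_T$, which converge in $L^2$, and pass to the limit in the moment-generating-function bound via Fatou's lemma; no path continuity is needed. Then $G_k$ is sub-Gaussian with proxy $\Var(G_k)\lesssim\Theta_nJ_m^{-2}\sum_iN_i^2$. A union bound over the $K_m$ coordinates gives $\max_k|G_k|/M_{1,n}=O_p[\{\Theta_n\sum_iN_i^2\log K_m\}^{1/2}/(M_{1,n}J_m)]$ directly. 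In the paper, $b_n$ is not a block length: $b_n^d=(e+p_n+m_{\mathrm{eff},n})^{2d/\alpha_\beta}$ only absorbs logarithms, via $\log(e+M_{1,n}+p_n)\le C\delta_n^{-d}(\max_iN_i)b_n^d$.

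\textbf{The time-fluctuation part skips two steps.} These steps are what actually generate the last term of \eqref{eq:mean-main-rate}.
\begin{enumerate}[label=(\roman*),leftmargin=2.2em]
\item Given $X$, the conditional variance $M_{1,n}^{-2}\sum_iN_i\int B_{k,m}^2X(S_i,\cdot)^2f_T$ is random. Only its expectation is of order $(M_{1,n}J_m)^{-1}$, and a bound holding simultaneously in $k$ must account for cross-site dependence. The paper uses the Hilbert-space sub-Gaussian energy bound of \Cref{lem:mean-hilbert-energy}, which adds a term of order $\Theta_n\max_iN_i\,x/(M_{1,n}^2J_m)$.
\item Conditional Bernstein needs bounded summands, but given $X$ the values $X(S_i,T_{ij})$ are controlled only by $\sup_t|X(S_i,t)|$, on which nothing is assumed. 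The paper clips $X$ at level $C\{\log(e+M_{1,n}+K_m)\}^{1/2}$ and bounds the clipping error in expectation.
\end{enumerate}
The resulting random-variance and range terms, not a blocking range, are what \Cref{cor:mean-direct-absorption} absorbs into $J_m\delta_n^{-d}b_n^d\log K_m/m_n$ using \Cref{ass:mean-leverage}. Your claimed contribution $\sqrt{J_m\log K_m/M_{1,n}}$ leaves them out.
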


The first term on the right-hand side of \eqref{eq:mean-main-rate} is the spline-approximation error, while the second term corresponds to the stochastic part. Under the balanced-count condition this term can be further simplified.

\begin{corollary}\label[corollary]{cor:mean-phase}
 Suppose that the conditions of \Cref{thm:mean-rate} and \Cref{ass:balanced} are satisfied. If, additionally,  
$\delta_n^{-d}b_n^dJ_m\{\log K_m/[m_n(\Theta_n+J_m/\bar N_n)]\}^{1/2}\to0,
$
then (\ref{eq:mean-main-rate}) turns into
\begin{equation}\label{eq:mean-phase-rate}
\sup_{t\in\left[0,1\right]}|\hat m_{J_m}\left(t\right)-m\left(t\right)|
=
O_p\left(
J_m^{-q_m^\ast}+
\sqrt{\log K_m\left(
\frac{J_m}{m_n\bar N_n}+
\frac{1}{m_{\mathrm{eff},n}}
\right)}
\right).
\end{equation}

\end{corollary}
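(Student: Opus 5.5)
The corollary follows from \Cref{thm:mean-rate} by simplifying each stochastic term under \Cref{ass:balanced}; no new probabilistic argument is needed.

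\emph{Variance terms.} Under \Cref{ass:balanced}, $N_i\asymp\bar N_n$ uniformly in $i$. Hence $M_{1,n}=m_n\bar N_n$ and $M_{2,n}\le\sum_i N_i^2\asymp m_n\bar N_n^2$. This gives $M_{2,n}/M_{1,n}^2\lesssim 1/m_n$ and
\[
(\Theta_n-1)(M_{1,n}+M_{2,n})/M_{1,n}^2\lesssim(\Theta_n-1)\bigl(m_n\bar N_n+m_n\bar N_n^2\bigr)/(m_n\bar N_n)^2 .
\]
The right-hand side is at most $\Theta_n(1+\bar N_n^{-1})/m_n$. The piece $\Theta_n/(m_n\bar N_n)$ is dominated by $J_m/(m_n\bar N_n)+\Theta_n/m_n$, which can be checked by splitting into the cases $\bar N_n\ge1$ and $\bar N_n<1$. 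The case $\bar N_n<1$ means some sites have zero observations; balance forces $\bar N_n\gtrsim1$ once $N_i\ge1$, and this is where the lower bound in \Cref{ass:balanced} is used. With $1/m_n\le\Theta_n/m_n=1/m_{\mathrm{eff},n}$, the bracket in \eqref{eq:mean-main-rate} is therefore $O\{J_m/(m_n\bar N_n)+1/m_{\mathrm{eff},n}\}$, which is the square-root term in \eqref{eq:mean-phase-rate}.

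\emph{Remainder term.} It remains to show that $J_m\delta_n^{-d}b_n^d\log K_m/m_n$ is $o$ of the square-root term. Write
\[
R_n:=\sqrt{\log K_m\,\bigl(J_m/(m_n\bar N_n)+\Theta_n/m_n\bigr)}=\sqrt{\log K_m\,(\Theta_n+J_m/\bar N_n)/m_n}.
\]
Dividing the remainder by $R_n$ gives
\[
\frac{J_m\delta_n^{-d}b_n^d\log K_m/m_n}{R_n}
= J_m\delta_n^{-d}b_n^d\sqrt{\frac{\log K_m}{m_n(\Theta_n+J_m/\bar N_n)}} ,
\]
which tends to zero exactly by the extra hypothesis of the corollary. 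So the remainder is $o(R_n)$ and can be absorbed.

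\emph{Main obstacle.} The only delicate point is the bookkeeping of the $(\Theta_n-1)M_{1,n}/M_{1,n}^2$ contribution, i.e.\ making sure it never exceeds the displayed rate. I would handle it by bounding it by $\Theta_n/(m_n\bar N_n)\le\Theta_n/m_n$ whenever $\bar N_n\ge1$, and by using the balance assumption to exclude the degenerate case. Combining the three steps with the unchanged bias term $J_m^{-q_m^\ast}$ yields \eqref{eq:mean-phase-rate}.
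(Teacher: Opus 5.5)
Your proposal is correct and is essentially the paper's argument: the paper also reduces the variance bracket to $J_m/(m_n\bar N_n)+C\Theta_n/m_n$, and it absorbs the remainder $J_m\delta_n^{-d}b_n^d\log K_m/m_n$ through exactly the ratio you computed (this is how its reduced score bound follows from the extra hypothesis). The only difference is that the paper uses the identity $M_{1,n}+M_{2,n}=\sum_i N_i^2\le \max_i N_i\,M_{1,n}\le CM_{1,n}^2/m_n$, so the $(\Theta_n-1)M_{1,n}$ contribution needs no separate treatment, and neither your case split on $\bar N_n$ nor the lower balance bound is required (the leverage condition suffices).
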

The stochastic part in \eqref{eq:mean-phase-rate}  consists of two components with different information scales. The term \(J_m/(m_n\bar N_n)\) corresponds to observation-level variation, whereas \(m_{\mathrm{eff},n}^{-1}\) represents the curve-level component. Independently sampled observation times and measurement errors contribute to the former, while variation of the latent random field and spatial dependence contribute to the latter. Increasing the number of observations per curve reduces the observation-level component until the curve-level component becomes dominant.
\\
Ignoring logarithmic factors, the balance between these two components yields the temporal phase-transition boundary
\begin{equation}\label{eq:mean-phase-boundary}
\bar N_n\asymp \frac{m_{\mathrm{eff},n}^{1/(2q_m^\ast)}}{\Theta_n}.
\end{equation}
Below this boundary, balancing the temporal spline bias with the observation-level component gives \(J_m\asymp(m_n\bar N_n)^{1/(2q_m^*+1)}\) and the rate
\[
\sup_{t\in [0,1]}|\hat m_{J_m}(t)-m(t)|
=O_p\!\left\{(m_n\bar N_n)^{-q_m^\ast/(2q_m^\ast+1)}\right\},
\]
up to logarithmic factors. At or above the boundary, choosing \(J_m\asymp m_{\mathrm{eff},n}^{1/(2q_m^*)}\) yields the rate \(m_{\mathrm{eff},n}^{-1/2}\), again up to logarithmic factors. We refer to temporal sampling below the boundary as sparse and to sampling at or above it as dense.
\\
An important consequence of \eqref{eq:mean-phase-boundary} is that the temporal phase-transition boundary depends not only on the effective number of spatial curves, but also on the spatial dependence inflation factor \(\Theta_n\). In the sparse-space regime, \(\Theta_n\asymp1\) and \(m_{\mathrm{eff},n}\asymp m_n\), so that\eqref{eq:mean-phase-boundary} reduces to the familiar boundary \(\bar N_n\asymp m_n^{1/(2q_m^*)}\). By contrast, under full-dimensional spatial infill with persistent local dependence, we have \(m_{\mathrm{eff},n}\asymp V_n\) and \(\Theta_n\asymp\delta_n^{-d}\), and the boundary becomes $
\bar N_n\asymp V_n^{1/(2q_m^*)}\delta_n^d.
$
Thus, increasing spatial sampling density shifts the temporal phase transition towards fewer observations per curve. Depending on the relative rates of domain expansion and spatial infill, this boundary may even converge to zero. Since \(\bar N_n\ge1\), all feasible temporal sampling densities then lie asymptotically in the dense-time regime of the leading convergence-rate bound. In this situation, additional spatial observations may still reduce the observation-level variation, but the leading curve-level error remains of order \(V_n^{-1/2}\). This illustrates how spatial infill can alter the temporal phase transition rather than merely reducing the effective number of independent curves.

\begin{remark}\label[remark]{rem:mean-special-cases}
If the data are independent, we have $\Theta_n=1$ and \Cref{cor:mean-phase} recovers the sparse-to-dense  structure in the work  of \citet{CaiYuan2011} and \citet{ZhangWang2016}. For $q_m^\ast=2$, the phase transition boundary is $\bar N_n\asymp m_n^{1/4}$ after logarithmic factors are suppressed.  The same algebra applies to a stationary functional time series observed on the fixed-spacing lattice.  Indeed, \Cref{ass:beta} with $d=1$ and $\alpha_\beta>2$ implies $\sum_{h\ge1}\beta_X\left(\Delta h\right)^{1/2}<\infty$, hence $\Theta_n=O\left(1\right)$ and $m_{\mathrm{eff},n}\asymp m_n$.  The resulting rate is of  order $(m_n\bar N_n )^{-q_m^\ast/\left(2q_m^\ast+1\right)}$ below the phase transition boundary and $m_n^{-1/2}$ above it.  The latter agrees with the root-$m_n$ mean behavior in \citet{HorvathKokoszkaReeder2013}, \citet{ChenSong2015}, and \citet{LiYang2023}, with serial dependence entering their long-run covariance.  This specialization complements sup-norm theory for independent curves \citep{BergerHermannHolzmann2025} by making the spatial covariance inflation explicit. The rates established here are upper bounds for the proposed estimators; establishing
matching minimax lower bounds and simultaneous inference is left for future work.
\end{remark}

\subsection{Locally stationary mean}

We next derive uniform convergence rates for the (spatial) kernel estimator in \eqref{hol3}, with coefficients obtained from \eqref{hol4}, and for the spatial spline estimator \eqref{hol6}.  Both methods allow the mean to vary smoothly over space.  We first state a condition used to describe this setting.

\begin{assumption}[Locally stationary mean]\label[assumption]{ass:local-mean}
There exists a function $M:\left[0,1\right]^d\times\left[0,1\right]\to\R$ such that: (i) for some $\gamma_u\in\left(0,1\right]$, $\sup_{t\in\left[0,1\right]}|M\left(u,t\right)-M\left(v,t\right)|\le C\|u-v\|_2^{\gamma_u}$ for all $u,v\in\left[0,1\right]^d$; (ii) the temporal smoothness requirement in \Cref{ass:mean-smooth} holds uniformly in $u$; (iii) the actual mean satisfies \eqref{hol5};

and (iv) the family $\left\{m\left(s,\cdot\right):s\in\cS_n\right\}$ is uniformly bounded in $C^{q_m,\nu_m}\left[0,1\right]$.
\end{assumption}

\subsubsection{Local-kernel estimator}\label{sec:theory-local}

For the theoretical analysis of the kernel estimator \eqref{hol3}, we make the following  assumption regarding the weights in the loss function \eqref{hol4}.

\begin{assumption} 
\label[assumption]{ass:local-mass}
At the fixed point $u_0 \in (0,1)^d$, we have  $\sum_{i=1}^{m_n}W_i\left(u_0\right)\asymp m_na_n^d$ and $\sum_{i=1}^{m_n}W_i\left(u_0\right)^2\asymp m_na_n^d$.

\end{assumption}
Recall  the nation $M_{1,n}\left(u_0\right)=\sum_{i=1}^{m_n}W_i\left(u_0\right)N_i$ and define $
m_{u_0,\mathrm{eff},n}:=\left\{M_{1,n}\left(u_0\right)\right\}^2/\\\left\{\Theta_n\sum_{i=1}^{m_n}W_i\left(u_0\right)^2N_i^2\right\}.
$ Note that under \Cref{ass:balanced,ass:local-mass} we have   $m_{u_0,\mathrm{eff},n}\asymp m_{\mathrm{eff},n}a_n^d$. Thus, spatial localization reduces both the nominal and the effective
curve-level sample sizes by the window-volume factor $a_n^d$.

The deterministic error of the local kernel estimator \eqref{hol3} consists of the temporal spline bias, the spatial smoothing bias $a_n^{\gamma_u}$, and the $L_n^{-1}$ approximation error from local stationarity.

\begin{theorem}\label[theorem]{thm:local-mean-rate}
Let Assumptions \ref{ass:time}, \ref{ass:design}, \ref{ass:beta}-\ref{ass:balanced}, \ref{ass:local-mean}, \ref{ass:local-mass} be satisfied and let  $J_m\to\infty$, $J_m[\{\log K_m/(M_{1,n}(u_0)J_m)\}^{1/2}+\log K_m/M_{1,n}(u_0)]\to0$. Take $p_n=K_m$ in the definition of $b_n$.  Then, for every fixed interior point $u_0$,
\begin{align}
&\sup_{t\in\left[0,1\right]}|\hat M_{J_m}\left(u_0,t\right)-M\left(u_0,t\right)|\nonumber
\\&=
O_p\left(
J_m^{-q_m^\ast}+a_n^{\gamma_u}+L_n^{-1}
+
\sqrt{\log K_m\left(
\frac{J_m}{m_na_n^d\bar N_n}+
\frac{1}{m_{\mathrm{eff},n}a_n^d}
\right)}
+
\frac{J_m\delta_n^{-d} b_n^d\log K_m}{m_n a_n^d}
\right).
\label{eq:local-mean-rate}
\end{align}

\end{theorem}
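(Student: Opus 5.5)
The plan follows the structure of the proof of \Cref{thm:mean-rate}, applied to the weighted problem \eqref{hol4}. The weights $W_i(u_0)$ are deterministic, and the locations $S_i$ are fixed. As a result, the weighted Gram matrix $\hat Q_{m,J_m}(u_0)$ and the weighted score behave like their pooled counterparts, except that $M_{1,n}$ is replaced by $M_{1,n}(u_0)$. The extra work is deterministic: we must control the spatial smoothing bias and the local-stationarity error.

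First define a local population target. Let $m_{J_m}(u_0,\cdot)=\bB_m^\top\bbeta_{m,J_m}(u_0)$ be the $f_T$-weighted $L^2$ projection of the averaged mean $\bar m_{u_0}(t):=M_{1,n}(u_0)^{-1}\sum_i W_i(u_0)N_i\, m(S_i,t)$ onto the spline space. With this choice the centred weighted scores
\[
\xi_{i,J_m}(u_0)=W_i(u_0)\sum_j\bB_m(T_{ij})\{Y_{ij}-m_{J_m}(u_0,T_{ij})\}
\]
have aggregate mean zero. The mean of each individual score is $W_i(u_0)N_i\E[\bB_m(T)\{m(S_i,T)-\bar m_{u_0}(T)\}]$. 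This is of order $a_n^{\gamma_u}+L_n^{-1}$ in the relevant norm, and it cancels in the sum.

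The error then splits into three parts:
\begin{align*}
\sup_t|\hat M_{J_m}(u_0,t)-M(u_0,t)| &\le \sup_t|\bar m_{u_0}(t)-M(u_0,t)|+\sup_t|m_{J_m}(u_0,t)-\bar m_{u_0}(t)| \\
&\quad+\sup_t|\bB_m(t)^\top\{\hat\bbeta_{m,J_m}(u_0)-\bbeta_{m,J_m}(u_0)\}|.
\end{align*}
The first term is bounded by $a_n^{\gamma_u}+L_n^{-1}$. This uses the compact support of $K$, so that $\|U_i-u_0\|\lesssim a_n$ whenever $W_i(u_0)>0$, together with \Cref{ass:local-mean}(i),(iii). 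The second term is $O(J_m^{-q_m^*})$. This follows from standard spline approximation and the $L^\infty$-stability of the projection (de Boor), since $\bar m_{u_0}$ is a convex combination of functions uniformly bounded in $C^{q_m,\nu_m}$ by \Cref{ass:local-mean}(iv).

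For the stochastic term, write
\[
\hat\bbeta_{m,J_m}(u_0)-\bbeta_{m,J_m}(u_0)=\hat Q_{m,J_m}(u_0)^{-1}M_{1,n}(u_0)^{-1}\sum_i\xi_{i,J_m}(u_0),
\]
as in \eqref{eq:mean-score-avg}. The Gram-matrix step requires showing $\|\hat Q_{m,J_m}(u_0)-Q_{m,J_m}\|_{\infty}\to0$ in the banded max/$\ell_\infty$ sense. The observation times are i.i.d. and independent of $X$, and the deterministic weights satisfy $\max_i W_iN_i/M_{1,n}(u_0)\lesssim 1/(m_na_n^d)$ by \Cref{ass:balanced,ass:local-mass}. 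A matrix Bernstein argument then gives exactly the imposed condition $J_m[\{\log K_m/(M_{1,n}(u_0)J_m)\}^{1/2}+\log K_m/M_{1,n}(u_0)]\to0$. The banded inverse of $Q_{m,J_m}$ has entries of size $J_m$ with exponential decay, which yields $\|\hat Q^{-1}\|_\infty=O_p(J_m)$.

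The main obstacle is the score. We need, for the maximum over the $K_m$ coordinates,
\[
\max_k\Big|M_{1,n}(u_0)^{-1}\sum_i\xi_{i,k}(u_0)\Big| = O_p\Big(J_m^{-1}\sqrt{\log K_m\{J_m/(m_na_n^d\bar N_n)+1/(m_{\mathrm{eff},n}a_n^d)\}}+\delta_n^{-d}b_n^d\log K_m/(m_na_n^d)\Big).
\]
I would decompose $\xi_{i,k}$ into two parts. The observation-level part contains the measurement error and the conditional-on-$X$ fluctuation of the $T_{ij}$. These are independent across all $(i,j)$, so Bernstein gives a variance of order $\sum_iW_i^2N_iJ_m^{-1}/M_{1,n}(u_0)^2\asymp J_m^{-1}/(m_na_n^d\bar N_n)$.

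The curve-level part is $W_iN_i\int B_k X(S_i,x)f_T\,dx$ plus the within-curve pair terms. It is spatially dependent. Its variance is bounded through the covariance-sum lemma (\SuppLemmaCref{lem:cov-sum}) by $\Theta_n\sum_iW_i^2N_i^2J_m^{-2}/M_{1,n}(u_0)^2 = J_m^{-2}/m_{u_0,\mathrm{eff},n}\asymp J_m^{-2}/(m_{\mathrm{eff},n}a_n^d)$.

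For the maximal inequality under beta-mixing, I would reuse the blocking and coupling construction from the proof of \Cref{thm:mean-rate}. Spatial blocks have side length $b_n$, and the number of sites per block is $\asymp\delta_n^{-d}b_n^d$. Berbee coupling costs at most $m_{\mathrm{eff},n}\beta(b_n)\to0$ by the choice of $b_n$. Bernstein over the coupled blocks then uses the sub-Gaussianity in \eqref{eq:joint-subGaussian-field}. This step produces the remainder $J_m\delta_n^{-d}b_n^d\log K_m/(m_na_n^d)$ once it is multiplied by $\|\hat Q^{-1}\|_\infty=O_p(J_m)$.

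The key check is that only sites with $\|U_i-u_0\|\le a_n$ contribute, and that the block-count bookkeeping holds with $m_n$ replaced by $m_na_n^d$. Finally, $\sup_t\|\bB_m(t)\|_1\le1$ converts the coefficient bound into the sup-norm bound, which gives \eqref{eq:local-mean-rate}.
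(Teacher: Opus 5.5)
Your deterministic decomposition (the kernel-averaged mean $\bar m_{u_0}$, its spline projection, and the coefficient error) matches the paper's \Cref{lem:local-mean-det}, and your Gram step matches \Cref{lem:local-mean-gram}. The score maximal inequality, which you identify as the crux, does not go through as proposed.

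\textbf{The blocking step is not available.} Berbee coupling of spatial blocks needs control of $\beta\{\cF_X(A),\cF_X(B)\}$ for a block $A$ and the union $B$ of the other blocks. \Cref{ass:beta} only bounds the pairwise coefficient between the single-site $\sigma$-fields $\cF_X(\{s\})$ and $\cF_X(\{s'\})$. Pairwise coefficients give no control of block coefficients; a field can be pairwise independent without being jointly independent. So the coupling cost $m_{\mathrm{eff},n}\beta_X(b_n)$ you invoke is not justified, and the proof of \Cref{thm:mean-rate} contains no blocking construction to reuse.

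\textbf{The missing idea.} Pairwise mixing gives, via Rio's inequality, only covariance bounds and hence variance bounds. The covariance-relative condition \eqref{eq:joint-subGaussian-field} turns such a variance bound directly into a sub-Gaussian tail. The paper extends that condition to integrated functionals. With $w_i=W_i(u_0)/M_{1,n}(u_0)$, the curve-level term $G_a=\sum_iw_iN_i\int B_{a,m}X(S_i,\cdot)f_T$ then has $\psi_2$-norm of order $\{\Var(G_a)\}^{1/2}\lesssim(\Theta_n\sum_iw_i^2N_i^2)^{1/2}/J_m$. A union bound over the $K_m$ coordinates then suffices, with no blocking (\Cref{lem:mean-direct-max}).

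\textbf{The observation-level step is also incomplete.} Conditionally on $X$, the summands $B_{a,m}(T_{ij})X(S_i,T_{ij})$ are unbounded. Their conditional variance $\sum_iw_i^2N_i\int B_{a,m}^2X(S_i,\cdot)^2f_T$ is random and spatially dependent. Bernstein therefore does not give the deterministic proxy $J_m^{-1}\sum_iw_i^2N_i$ uniformly in $a$, and bounding the maximum over $a$ by the sum over $a$ loses a factor $J_m$. The paper handles this as follows:
\begin{itemize}
\item it clips $X$ at level $\{\log(e+M_{1,n}+K_m)\}^{1/2}$ and applies conditional Bernstein;
\item it controls the random conditional variances uniformly in $a$ through the Hilbert-space energy bound of \Cref{lem:mean-hilbert-energy}, with covariance operator norm $\lesssim\Theta_n\max_iw_i^2N_i/J_m$.
\end{itemize}

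\textbf{Where the last term comes from.} The final term of \eqref{eq:local-mean-rate} is not a block-size term. Write $w_*=\max_iw_i$ and $N_*=\max_iN_i$, and note $w_*N_*\lesssim(m_na_n^d)^{-1}$ by \Cref{ass:balanced,ass:local-mass}. In \Cref{cor:mean-direct-absorption}, the two leftover terms $\log K_m(\Theta_n\max_iw_i^2N_i/J_m)^{1/2}$ and $w_*\log K_m\{\log(e+M_{1,n}+K_m)\}^{1/2}$ are absorbed into $\delta_n^{-d}b_n^d\log K_m\,w_*N_*$. This uses $\log(e+M_{1,n}+K_m)\lesssim\delta_n^{-d}N_*b_n^d$. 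Multiplying by $\|Q_{m,J_m}^{-1}\|_\infty\lesssim J_m$ gives the stated remainder.
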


Within the spatial window, the observation-level information is of order $m_na_n^d\bar N_n$, whereas the curve-level effective size is $m_{\mathrm{eff},n}a_n^d\asymp m_{u_0,\mathrm{eff},n}$.  The balance between these two information scales determines the local time-sampling transition.

\begin{corollary}[Local phase transition]\label[corollary]{cor:local-mean-phase}
If  the assumptions of \Cref{thm:local-mean-rate} and additionally the condition   $\delta_n^{-d}b_n^dJ_m\left\{\log K_m/\left[a_n^{d}m_n\left(\Theta_n+J_m/\bar N_n\right)\right]\right\}^{1/2}\to0$ are satisfied, the rate in \eqref{eq:local-mean-rate} simplifies to
\begin{equation}\label{eq:local-mean-rate-simple}
\sup_{t\in\left[0,1\right]}|\hat M_{J_m}\left(u_0,t\right)-M\left(u_0,t\right)|
=
O_p\left(
J_m^{-q_m^\ast}+a_n^{\gamma_u}+L_n^{-1}
+
\sqrt{\log K_m\left(
\frac{J_m}{m_na_n^d\bar N_n}+
\frac{1}{m_{\mathrm{eff},n}a_n^d}
\right)}
\right).
\end{equation}
\end{corollary}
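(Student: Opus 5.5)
The corollary follows directly from \Cref{thm:local-mean-rate}. The only work is to show that the last remainder term in \eqref{eq:local-mean-rate},
\[
R_n:=\frac{J_m\delta_n^{-d}b_n^d\log K_m}{m_na_n^d},
\]
is of smaller order than the stochastic term
\[
\Sigma_n:=\sqrt{\log K_m\left(\frac{J_m}{m_na_n^d\bar N_n}+\frac{1}{m_{\mathrm{eff},n}a_n^d}\right)}.
\]
Once $R_n=o(\Sigma_n)$ is established, $R_n$ is absorbed into $\Sigma_n$ inside the $O_p$, and \eqref{eq:local-mean-rate-simple} follows. The bias terms $J_m^{-q_m^\ast}+a_n^{\gamma_u}+L_n^{-1}$ are unaffected.

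First I rewrite $\Sigma_n$ in the form used by the extra condition. Since $m_{\mathrm{eff},n}=m_n/\Theta_n$, we have
\[
\frac{J_m}{m_na_n^d\bar N_n}+\frac{1}{m_{\mathrm{eff},n}a_n^d}=\frac{\Theta_n+J_m/\bar N_n}{m_na_n^d},
\]
so that
\[
\Sigma_n=\left\{\frac{\log K_m\,(\Theta_n+J_m/\bar N_n)}{m_na_n^d}\right\}^{1/2}.
\]
Next I form the ratio:
\[
\frac{R_n}{\Sigma_n}
=\frac{J_m\delta_n^{-d}b_n^d\log K_m}{m_na_n^d}\left\{\frac{m_na_n^d}{\log K_m(\Theta_n+J_m/\bar N_n)}\right\}^{1/2}
=\delta_n^{-d}b_n^dJ_m\left\{\frac{\log K_m}{a_n^dm_n(\Theta_n+J_m/\bar N_n)^{\phantom{1}}}\right\}^{1/2}\cdot\frac{1}{\Theta_n+J_m/\bar N_n}.
\]
The last factor is at most one, because $\Theta_n\ge1$ by \eqref{eq:Theta}. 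The remaining expression is exactly the quantity assumed in the corollary to tend to zero. Hence $R_n/\Sigma_n\to0$. (In fact the hypothesis is slightly stronger than needed, which is harmless.)

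Finally, $O_p(A_n+R_n)=O_p(A_n)$ whenever $R_n=o(A_n)$ deterministically. Here $A_n$ is the sum of the bias terms and $\Sigma_n$, and $R_n\le R_n/\Sigma_n\cdot A_n$. This gives \eqref{eq:local-mean-rate-simple}.

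There is no real obstacle. The one point to check carefully is the algebraic identity combining the two variance components into $(\Theta_n+J_m/\bar N_n)/(m_na_n^d)$, which relies only on the definition \eqref{meff}. No use of \Cref{ass:balanced} beyond its role in \Cref{thm:local-mean-rate} is required.
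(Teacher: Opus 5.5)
Your argument is correct and matches the paper's. The paper also absorbs the remainder $J_m\delta_n^{-d}b_n^d\log K_m/(m_na_n^d)$ into the leading stochastic term via the extra condition. It does this at the score level, in the reduced bound of the local score lemma after multiplying by $J_m$, and then uses $\|Q_{m,J_m}^{-1}\|_\infty\le CJ_m$.

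There is one small algebra slip. The ratio $R_n/\Sigma_n$ equals $\delta_n^{-d}b_n^dJ_m\{\log K_m/[a_n^dm_n(\Theta_n+J_m/\bar N_n)]\}^{1/2}$ exactly, with no extra factor $1/(\Theta_n+J_m/\bar N_n)$. So the hypothesis is precisely what is needed, not stronger than needed. This does not affect your conclusion.
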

Ignoring logarithmic factors and $L_n^{-1}$, the time-direction phase transition boundary is  
\begin{equation}\label{eq:local-mean-phase-boundary}
\bar N_n\asymp
\frac{\left(m_{\mathrm{eff},n}a_n^d\right)^{1/(2q_m^\ast)}}{\Theta_n}.
\end{equation}
Below this phase transition boundary, balancing the temporal  spline bias with the observation-level component gives $J_m\asymp(m_na_n^d\bar N_n)^{1/(2q_m^\ast+1)}$ and the time-smoothing contribution $(m_na_n^d\bar N_n)^{-q_m^\ast/(2q_m^\ast+1)}$ in (\ref{eq:local-mean-rate-simple}), up to logarithmic factors. At or above the phase transition boundary, the curve-level component determines the order of the stochastic error. Taking $J_m\asymp(m_{\mathrm{eff},n}a_n^d)^{1/(2q_m^\ast)}$ gives the order $(m_{\mathrm{eff},n}a_n^d)^{-1/2}$. In both regimes, the complete error bound in \eqref{eq:local-mean-rate-simple} also contains the spatial terms $a_n^{\gamma_u}$ and $L_n^{-1}$.

The preceding results concern estimation at a fixed spatial location. We next strengthen Theorem~\ref{thm:local-mean-rate} to obtain uniformity over interior spatial points.

\begin{theorem}[Uniformity over interior spatial points]\label[theorem]{thm:local-mean-uniform}
Let $\mathcal U_\circ=\left[c_0,1-c_0\right]^d$ for some fixed constant $c_0\in\left(0,1/2\right)$, and let \Cref{ass:time}, \ref{ass:design}, \ref{ass:beta}-\ref{ass:balanced} and \ref{ass:local-mean} be satisfied. Further, assume that the kernel $K$ is bounded, Lipschitz continuous, and compactly supported and that, for $r=1,2$,
\[
c m_na_n^d\le \inf_{u\in\mathcal U_\circ}\sum_{i=1}^{m_n}W_i(u)^r
\le \sup_{u\in\mathcal U_\circ}\sum_{i=1}^{m_n}W_i(u)^r\le C m_na_n^d.
\]
Let $\mathcal U_n\subset\mathcal U_\circ$ be a regular $(a_n^{2d+2}/J_m )$-net, {take $p_n=K_m|\mathcal U_n|$ in the definition of $b_n$}, and define $\ell_n:=\log K_m+\log\left(a_n^{-1}\right)$.  
 If  $J_m\to\infty$, $J_m\big [\big \{\ell_n/ (m_na_n^d\bar N_nJ_m)\big \}^{1/2}+\ell_n/(m_na_n^d\bar N_n )\big ]\to0$,
we have 
\begin{align}
&\sup_{u\in\mathcal U_\circ}\sup_{t\in\left[0,1\right]}|\hat M_{J_m}\left(u,t\right)-M\left(u,t\right)|
\nonumber\\
&=
O_p\left(
J_m^{-q_m^\ast}+a_n^{\gamma_u}+L_n^{-1}
+
\sqrt{\ell_n\left(
\frac{J_m}{m_na_n^d\bar N_n}+
\frac{1}{m_{\mathrm{eff},n}a_n^d}
\right)}
+
\frac{J_m\delta_n^{-d} b_n^d\ell_n}{m_n a_n^d}
\right)
\label{eq:local-mean-uniform}
\end{align}

\end{theorem}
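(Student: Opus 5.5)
The plan is to reduce the uniform statement to the pointwise analysis behind \Cref{thm:local-mean-rate}, applied simultaneously on the finite net $\mathcal U_n$, and then interpolate between net points using the Lipschitz continuity of $K$.

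\textbf{Step 1: decomposition.} For each $u\in\mathcal U_\circ$ I would write
\[
\hat M_{J_m}(u,t)-M(u,t)=\bB_m(t)^\top\hat Q_{m,J_m}(u)^{-1}\bigl\{\hat r_{m,J_m}(u)-\hat Q_{m,J_m}(u)\bbeta_{m,J_m}(u)\bigr\}+\bigl\{M_{J_m}(u,t)-M(u,t)\bigr\}.
\]
Here $\bbeta_{m,J_m}(u)$ is the population projection of the kernel-weighted mean $\sum_i W_i(u)N_i m(S_i,\cdot)/M_{1,n}(u)$. The deterministic term is bounded exactly as in the pointwise proof by $J_m^{-q_m^\ast}+a_n^{\gamma_u}+L_n^{-1}$. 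That bound uses \Cref{ass:local-mean}(i)--(iv), the support of $K$ in $[-1,1]^d$, and the stability of the spline projection in sup-norm; these constants do not depend on $u$, since $u\in\mathcal U_\circ$ stays at distance $c_0$ from the boundary and the two-sided mass bounds hold uniformly.

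\textbf{Step 2: Gram matrix and score on the net.} The centered score is
\[
\sum_i W_i(u)\,\xi_{i,J_m}(u)\big/M_{1,n}(u).
\]
It has the same structure as the score in the pointwise proof, with deterministic weights. Two things are needed uniformly over $u\in\mathcal U_n$.

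First, $\|\hat Q_{m,J_m}(u)-Q_{m,J_m}\|$ must be small, uniformly on the net. I would get this from a matrix Bernstein inequality for the independent times $T_{ij}$ plus a union bound over $|\mathcal U_n|$. Since $|\mathcal U_n|\lesssim (J_m a_n^{-2d-2})^d$, the log cardinality is $\lesssim\ell_n$, and the stated condition on $J_m$ makes the deviation $o(J_m^{-1})$. This keeps the inverse bounded.

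Second, I need a maximal inequality for the $p_n=K_m|\mathcal U_n|$ coordinates $\{\bB_m(t_k)^\top\cdots\}$, or the score coordinates themselves. I would invoke the same supplementary coupling and Bernstein-type lemma used in \Cref{thm:local-mean-rate}. That lemma splits the score into
\begin{itemize}[label={}]
\item an observation-level part (time sampling and errors), handled by a conditional Bernstein bound with variance $\asymp J_m/(m_na_n^d\bar N_n)$ per coordinate;
\item a curve-level part, handled by blocking with a beta-mixing coupling at block size governed by $b_n$, using \SuppLemmaCref{lem:cov-sum} for the variance $\asymp 1/(m_{\mathrm{eff},n}a_n^d)$.
\end{itemize}
The choice $p_n=K_m|\mathcal U_n|$ in $b_n$ exactly accounts for the enlarged union. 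It produces the $\sqrt{\ell_n(\cdot)}$ term and the remainder $J_m\delta_n^{-d}b_n^d\ell_n/(m_na_n^d)$, with the sub-Gaussian condition \eqref{eq:joint-subGaussian-field} supplying the tails. The supremum over $t$ reduces to the coefficient sup-norm via $\sup_t\|\bB_m(t)\|_1\le1$ and the $\ell_\infty$-boundedness of $Q^{-1}$ for banded spline Gram matrices.

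\textbf{Step 3: off-net discretization.} This is the main obstacle. For $u\in\mathcal U_\circ$ pick $u'\in\mathcal U_n$ with $\|u-u'\|\le a_n^{2d+2}/J_m$. Then the Lipschitz property of $K$ gives
\[
|W_i(u)-W_i(u')|\lesssim a_n^{2d+1}/J_m,
\]
and only sites within distance of order $a_n$ of $u$ or $u'$ contribute, so at most $O(m_na_n^d)$ of them. Hence the weighted sums $\hat Q$, $\hat r$ and $M_{1,n}(\cdot)$ change by relative amounts $O_p(a_n^{d+1}/J_m)$ after normalization by $M_{1,n}(u)\asymp m_na_n^d\bar N_n$. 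The absolute moments of $Y_{ij}$ are controlled using sub-Gaussianity and a crude $\max_{ij}|Y_{ij}|=O_p(\sqrt{\log M_{1,n}})$, which is absorbed by the polynomial factor $a_n$. Combining this with $\|\hat Q^{-1}\|_\infty=O_p(1)$ shows the discretization error is $o_p$ of the stated rate, provided $\log M_{1,n}\lesssim \ell_n$ up to the powers of $a_n$ gained. If that does not hold cleanly, I would shrink the mesh further, which only affects $\ell_n$ logarithmically.
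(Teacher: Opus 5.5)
Your Steps 1 and 2 follow the paper's proof. The deterministic bound of \Cref{lem:local-mean-det} holds with constants uniform in $u$. The score on the net comes from one application of \Cref{cor:mean-direct-absorption} to the joint coordinates $(v,a)$, with weights $W_i(v)/M_{1,n}(v)$ and $\log(e+K_m|\mathcal U_n|)\le C\ell_n$. Bernstein's inequality with a union bound over the net gives the uniform Gram bound. That supplementary lemma does not block or couple, as you describe. Beta-mixing enters only through Rio's covariance bound, the curve-level part is handled by the covariance-relative sub-Gaussian condition \eqref{eq:joint-subGaussian-field} and a Hilbert-space energy bound, and $b_n$ appears only when logarithmic terms are absorbed into the remainder. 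As a black box, your use of it is nevertheless correct.

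The gap is in Step 3.
\begin{itemize}
\item Bounding the responses by $\max_{i,j}|Y_{ij}|=O_p(\sqrt{\log M_{1,n}})$ leaves an interpolation error of the form $a_n^{k}\sqrt{\log M_{1,n}}$ for some fixed $k$. Apart from $a_n\to0$, the hypotheses only bound $a_n$ from below, so $a_n=(\log M_{1,n})^{-1/(4k)}$ is admissible. Then this error grows like $(\log M_{1,n})^{1/4}$, while every term of the stated rate can still tend to zero.
\item Your fallback of refining the mesh proves a different statement. The net fixes $p_n=K_m|\mathcal U_n|$ and hence $b_n$ in the remainder. The extra $\log\log M_{1,n}$ in the log-cardinality also need not be $O(\ell_n)$.
\item $\oinf{\hat Q(u)^{-1}}$ is $O_p(J_m)$, not $O_p(1)$. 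The factor $J_m^{-1}$ in the mesh $\eta_n=a_n^{2d+2}/J_m$ is there precisely to cancel it.
\item The identity $\hat\bbeta(u)-\hat\bbeta(u')=\hat Q(u)^{-1}[\hat r(u)-\hat r(u')+\{\hat Q(u')-\hat Q(u)\}\hat\bbeta(u')]$ needs $\max_{v\in\mathcal U_n}\|\hat\bbeta(v)\|_\infty=O_p(1+\rho_n)$. Here $\rho_n$ is the stochastic part of the rate, and this bound must come from the net analysis (bounded population local coefficients plus the score bound). The crude estimate $\oinf{\hat Q^{-1}}\|\hat r\|_\infty$ costs a factor $J_m$ that the mesh does not absorb.
\end{itemize}

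The paper avoids the maximum altogether. Markov's inequality gives the single random quantity $M_{1,n}^{-1}\sum_{i,j}|Y_{ij}|=O_p(1)$, which is uniform in $u$ for free. Combine this with $\max_i|W_i(u)-W_i(u')|\le C\eta_n/a_n$ and $\inf_u M_{1,n}(u)\ge ca_n^dM_{1,n}$. The Gram change is then $O(\eta_n/a_n^{d+1})$. The worst moment-vector term, coming from the change in the normalizing mass, is $O_p(\eta_n/a_n^{2d+1})=O_p(a_n/J_m)$; this is where the exponent $2d+2$ in the net comes from. Multiplying by $\oinf{\hat Q(u)^{-1}}=O_p(J_m)$ and using the bounded net coefficients gives $\sup_{u,t}|\hat M_{J_m}(u,t)-\hat M_{J_m}(u',t)|=O_p\{a_n(1+\rho_n)\}$. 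This is absorbed since $a_n\le a_n^{\gamma_u}$. Finally, $|M(u,t)-M(u',t)|\le C\eta_n^{\gamma_u}\le Ca_n^{\gamma_u}$.
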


\subsubsection{Spatial-spline estimator}\label{sec:theory-spatial-spline}
The tensor-product estimator \eqref{hol6} recovers the complete surface $M$ in one global fit.  In place of a local window, it allocates the effective sample across $J_s^d$ spatial degrees of freedom. For its theoretical analysis we require the following assumptions.

\begin{assumption}[Spatial smoothness of the local mean]
\label[assumption]{ass:spatial-spline-mean}
Let $q_u\in\mathbb N_0$, $\nu_u\in(0,1]$, and write
$q_u^*=q_u+\nu_u$. For every multi-index
$\alpha\in\mathbb N_0^d$ with $|\alpha|\le q_u$ and every $0\le \ell\le q_m$, the derivative
$\partial_u^\alpha \partial_t^\ell M(u,t)$ 
exists and is uniformly bounded on $[0,1]^d\times[0,1]$. Moreover,
there exists a constant $C>0$ such that
\[
\sup_{t\in[0,1]}
\left|
\partial_u^\alpha \partial_t^\ell M(u,t)
-
\partial_u^\alpha \partial_t^\ell M(v,t)
\right|
\le
C\|u-v\|_2^{\nu_u}
\]
for all $u,v\in[0,1]^d$, $|\alpha|=q_u$, and
$0\le \ell\le q_m$, and
\[
\sup_{u\in[0,1]^d}
\left|
\partial_u^\alpha \partial_t^{q_m} M(u,t)
-
\partial_u^\alpha \partial_t^{q_m} M(u,s)
\right|
\le
C|t-s|^{\nu_m}
\]
for all $s,t\in[0,1]$ and $|\alpha|\le q_u$.
The spline orders satisfy $
p_s\ge q_u^*+1, ~
p\ge q_m^*+1.
$
\end{assumption}

\begin{assumption}[Spatial design stability]\label[assumption]{ass:spatial-spline-design}
The aspect ratios $L_{r,n}/L_n$ are bounded above and bounded away from zero. For the rescaled sites $U_i=U(S_i)\in[0,1]^d$, define  
\[
h_{U,n}:=\sup_{u\in[0,1]^d}\min_{1\le i\le m_n}\|u-U_i\|_2,
\qquad
q_{U,n}:=\frac12\min_{i\ne k}\|U_i-U_k\|_2,
\]
as the fill distance and separation radius. Assume that $h_{U,n}/q_{U,n}\le C$ and $h_{U,n}\to0$.  Thus the rescaled sites are quasi-uniform and, in particular, $h_{U,n}\asymp q_{U,n}\asymp m_n^{-1/d}$.  The spatial spline resolution satisfies $J_sh_{U,n}\to0$.
\end{assumption}

\begin{theorem}[Spatial-spline local mean: global supnorm rate]\label[theorem]{thm:spatial-spline-mean-rate}
Let Assumptions \ref{ass:time}, \ref{ass:design}, \ref{ass:beta}- \ref{ass:balanced}, \ref{ass:local-mean}, \ref{ass:spatial-spline-mean}, \ref{ass:spatial-spline-design} be satisfied.  {Take $p_n=K_sK_m$ in the definition of $b_n$.}  If
$
J_s^dJ_m\left\{
\sqrt{J_s^{-d}\log\left(K_sK_m\right)/(M_{1,n}J_m)}
+ \log\left(K_sK_m\right)/M_{1,n}
\right\}\to0,$ 
then
\begin{align}
&\sup_{u\in\left[0,1\right]^d}\sup_{t\in\left[0,1\right]}
|\hat M^{\mathrm{sp}}_{J_s,J_m}\left(u,t\right)-M\left(u,t\right)|
\label{eq:spatial-spline-mean-rate}\\
&\quad =
O_p\left(
J_s^{-q_u^\ast}+J_m^{-q_m^\ast}+L_n^{-1}
+
\sqrt{
J_s^d\log\left(K_sK_m\right)
\left(
\frac{J_m}{m_n\bar N_n}+
\frac{1}{m_{\mathrm{eff},n}}
\right)}
+
\frac{J_s^dJ_m\delta_n^{-d} b_n^d\log\left(K_sK_m\right)}{m_n}
\right).
\nonumber
\end{align}

\end{theorem}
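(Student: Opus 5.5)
The plan is to follow the template of \Cref{thm:mean-rate} with the tensor-product basis $\bZ_{J_s,J_m}$ in place of $\bB_m$. I would decompose
\[
\hat M^{\mathrm{sp}}_{J_s,J_m}-M=\bigl(\hat M^{\mathrm{sp}}_{J_s,J_m}-M^\circ_{J_s,J_m}\bigr)+\bigl(M^\circ_{J_s,J_m}-M\bigr).
\]

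\textbf{Deterministic part.} For the second term I would compare $M^\circ_{J_s,J_m}$ with the design-weighted projection of $M(U_i,\cdot)$. The change in the target is at most $CL_n^{-1}$ by \eqref{hol5}. That projection is a least-squares projection, so it is stable in sup-norm once the Gram matrix $Q_{M,J_s,J_m}$ is shown to be well conditioned. I would get this conditioning from
\begin{itemize}[label={}]
\item the quasi-uniform design of \Cref{ass:spatial-spline-design}, with $J_sh_{U,n}\to0$ so that each spatial B-spline support contains $\asymp m_nJ_s^{-d}$ sites, and
\item \Cref{ass:balanced} and the bounds on $f_T$,
\end{itemize}
which give eigenvalues $\asymp (K_sK_m)^{-1}$. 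Together with the banded structure (a Demko-type exponential decay of $Q^{-1}$), this yields a Lebesgue-constant bound in $\infty$-norm. Stability then reduces the bias to the best tensor-spline approximation error in sup-norm, which is $O(J_s^{-q_u^\ast}+J_m^{-q_m^\ast})$ by \Cref{ass:spatial-spline-mean} and standard tensor-product spline theory \citep{deBoor2001}.

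\textbf{Stochastic part.} The normal equations give
\[
\hat\btheta-\btheta^\circ=\hat Q_{M,J_s,J_m}^{-1}M_{1,n}^{-1}\sum_i\omega_{i,J_s,J_m}.
\]
\begin{enumerate}[label=(\roman*)]
\item I would control $\|\hat Q_{M,J_s,J_m}-Q_{M,J_s,J_m}\|$ by a matrix Bernstein inequality. This is legitimate because, conditional on the fixed sites, the $T_{ij}$ are independent and $\hat Q$ does not involve $X$. The resulting deviation is of order $\sqrt{J_s^{-d}\log(K_sK_m)/(M_{1,n}J_m)}+\log(K_sK_m)/M_{1,n}$ relative to the eigenvalue scale $(K_sK_m)^{-1}$, and the stated condition makes it $o(1)$. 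Hence $\|\hat Q^{-1}\|_\infty\lesssim K_sK_m$.
\item I would bound $\max_k|e_k^\top\sum_i\omega_i|$ over the $p_n=K_sK_m$ coordinates. Each $\omega_i$ splits into three pieces:
\begin{itemize}[label={}]
\item a measurement-error/time-design piece, independent across $(i,j)$;
\item the latent piece $\sum_j\bZ(U_i,T_{ij})X(S_i,T_{ij})$ minus its conditional mean;
\item the curve-level piece $N_i\int\bZ(U_i,x)X(S_i,x)f_T(x)\,dx$.
\end{itemize}
The first two are handled by Bernstein's inequality, conditionally on $X$. For a coordinate indexed by a spatial spline $A_k$ and a temporal spline $B_l$, this gives variance $\asymp M_{1,n}J_s^{-d}J_m^{-1}$. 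The curve-level piece is a spatially dependent sum. Its variance is bounded through the covariance inequality of \SuppLemmaCref{lem:cov-sum}; only the $\asymp m_nJ_s^{-d}$ sites in the support of $A_k$ contribute, giving variance $\lesssim\Theta_n\bar N_n^2m_nJ_s^{-d}J_m^{-2}$. A union bound over the coordinates then supplies the $\log(K_sK_m)$ factor.
\item Multiplying by $\|\hat Q^{-1}\|_\infty\asymp J_s^dJ_m$ and using $\sup_{u,t}\|\bZ(u,t)\|_1\le1$ gives the term
\[
\sqrt{J_s^d\log(K_sK_m)\left(\frac{J_m}{m_n\bar N_n}+\frac{1}{m_{\mathrm{eff},n}}\right)}.
\]
\end{enumerate}

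\textbf{Main obstacle.} The hardest step is the maximal inequality for the dependent curve-level sums. Sub-Gaussian tails through \eqref{eq:joint-subGaussian-field} do not survive beta-mixing directly. I would first try a Berbee coupling on spatial blocks of side $b_n$, which replaces the dependent sum by independent block sums. The coupling costs a remainder of order $J_s^dJ_m\delta_n^{-d}b_n^d\log(K_sK_m)/m_n$, namely the number of sites per block times the basis scale. Alongside this, I would use the joint sub-Gaussian bound within blocks, with $b_n=(e+p_n+m_{\mathrm{eff},n})^{2/\alpha_\beta}$ chosen so that the coupling probability $\beta_X(b_n)$, summed over coordinates and blocks, vanishes. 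This step mirrors \Cref{thm:mean-rate}; the only changes are the $J_s^{-d}$ support localization in the variances and the enlarged $p_n$.
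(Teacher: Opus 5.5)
Your overall architecture matches the paper's: a stable projection with $\|Q_{M,J_s,J_m}^{-1}\|_\infty\lesssim J_s^dJ_m$ for the bias, Gram concentration plus a Neumann series, and a coordinatewise score bound split into observation-level and curve-level parts with the same variance proxies. The gap is in the step you single out as the main obstacle. Your mechanism there is neither available nor needed under the stated assumptions.

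\textbf{Why the Berbee coupling fails.} A Berbee coupling of spatial blocks needs absolute-regularity coefficients between the $\sigma$-fields of whole blocks, and in fact between one block and the union of the others. \Cref{ass:beta} only controls the pairwise coefficient between two single-site fields $\cF_X(\{s\})$ and $\cF_X(\{s'\})$. Pairwise mixing does not bound block mixing; pairwise independent but jointly dependent families are the standard counterexample. So the coupling probability is not $\beta_X(b_n)$, and the step fails as stated.

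\textbf{What the paper does instead.} The paper never couples. Condition \eqref{eq:joint-subGaussian-field} applies to every deterministic linear combination of the dependent field. It therefore applies directly to the whole curve-level sum $G_q=\sum_i w_{iq}N_i\int B_{b,m}(t)X(S_i,t)f_T(t)\,dt$ with $w_{iq}=A_a(U_i)/M_{1,n}$, after extending the condition from point evaluations to these integrals by an $L^2$-approximation and Fatou argument. Hence $G_q$ is sub-Gaussian with $\psi_2$-norm of order $\Var(G_q)^{1/2}$. Rio's inequality gives $\Var(G_q)\lesssim\Theta_n\sum_iw_{iq}^2N_i^2J_m^{-2}$, and a union bound over the $K_sK_m$ coordinates yields the curve-level term. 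Your remark that sub-Gaussian tails ``do not survive beta-mixing directly'' is therefore a misdiagnosis. The last term of the rate is also not a coupling cost. In the paper, $b_n^d=(e+p_n+m_{\mathrm{eff},n})^{2d/\alpha_\beta}$ only absorbs, via $\log R\le CR^{2d/\alpha_\beta}$, the Bernstein range term and the clipping level $\{\log(e+M_{1,n}+p_n)\}^{1/2}$ from the observation-level part.

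\textbf{The observation-level part needs more than you give it.} Conditionally on $X$, the summands $\bZ(U_i,T_{ij})X(S_i,T_{ij})$ are unbounded, since no sup-norm tail condition on sample paths is assumed. Their conditional variance $\sum_iw_{iq}^2N_i\int B_{b,m}^2X(S_i,\cdot)^2f_T$ is also random. The paper handles this as follows:
\begin{itemize}
\item it clips $X$ at level $\asymp\{\log(e+M_{1,n}+p_n)\}^{1/2}$ and controls the clipping error in expectation;
\item it bounds the random variance uniformly in $q$ by a Hilbert-space sub-Gaussian energy inequality;
\item the operator-norm part of that inequality, of order $\Theta_n\max_iw_{iq}^2N_i/J_m$, is where dependence enters this piece and feeds the remainder.
\end{itemize}

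\textbf{Two smaller points on the Gram matrices.}
\begin{itemize}
\item A spectral-norm matrix Bernstein bound gives $\|\hat Q^{-1}\|_\infty\lesssim J_s^dJ_m$ only after you use bounded row sparsity, $\|A\|_\infty\le s\max_{k,l}|A_{kl}|\le s\|A\|_2$, and run the Neumann series in $\ell_\infty$.
\item $\|Q^{-1}\|_\infty\lesssim J_s^dJ_m$ needs exponential decay in the tensor-index (graph) distance. The paper obtains it from $Q_{M,J_s,J_m}=Q_{s,J_s,n}\otimes Q_{m,J_m}$ and a Benzi--Razouk argument for the spatial factor. A banded Demko bound in a linear ordering has bandwidth growing like $J_s^{d-1}$ and loses powers of $J_s$ when $d\ge2$. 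The lower spectral bound for the spatial factor also needs the sampling inequality $\sum_i\pi_{i,n}g(U_i)^2\asymp\|g\|_{L^2}^2$ over the spline space; counting sites per support gives only the upper bound.
\end{itemize}
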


Note that the approximation errors have the different order $J_s^{-q_u^\ast}$ and $J_m^{-q_m^\ast}$ in the space and time direction.  The observation-level component allocates $m_n\bar N_n$ measurements across $J_s^d$ spatial degrees of freedom, whereas the curve-level component allocates the dependence-adjusted size $m_{\mathrm{eff},n}$ across the same spatial resolution.

\begin{corollary}[Spatial phase transition]\label[corollary]{cor:spatial-spline-mean-phase}
Let the assumptions of \Cref{thm:spatial-spline-mean-rate} be  satisfied and assume  that   $\delta_n^{-d}b_n^dJ_m\big \{J_s^d\log (K_sK_m )/\big [m_n (\Theta_n+J_m/\bar N_n\  )\big]\big \}^{1/2}\to0$.  Then for fixed spatial resolution $J_s$ and with logarithmic factors suppressed, the time-direction phase transition boundary is
\[
\bar N_n\asymp
\frac{\left(m_{\mathrm{eff},n}/J_s^d\right)^{1/(2q_m^\ast)}}{\Theta_n}.
\]
Below this phase transition boundary, we have $J_m\asymp\left(m_n\bar N_n/J_s^d\right)^{1/(2q_m^\ast+1)}$ and
\[
R_{m,n}:=\sup_{u\in\left[0,1\right]^d}\sup_{t\in\left[0,1\right]}
|\hat M^{\mathrm{sp}}_{J_s,J_m}\left(u,t\right)-M\left(u,t\right)|
=O_p\left\{J_s^{-q_u^\ast}+L_n^{-1}+(m_n\bar N_n/J_s^d)^{-q_m^\ast/(2q_m^\ast+1)}\right\}.
\]
At or above it,  we have $J_m\asymp (m_{\mathrm{eff},n}/J_s^d )^{1/(2q_m^\ast)}$ and $R_{m,n}=O_p\big \{J_s^{-q_u^\ast}+L_n^{-1}+ (J_s^d/m_{\mathrm{eff},n} )^{1/2}\big \}$.  If $L_n^{-1}$ is of  lower order, then the joint minimization in the sparse-time regime gives
\[
R_{m,n}=O_p\left\{ (m_n\bar N_n )^{-q_u^\ast q_m^\ast/\left(2q_u^\ast q_m^\ast+dq_m^\ast+q_u^\ast\right)}\right\},
\]
whereas in the dense-time spatial regime the  rate is given by  $m_{\mathrm{eff},n}^{-q_u^\ast/\left(2q_u^\ast+d\right)}$.

\end{corollary}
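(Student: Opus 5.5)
The plan is to derive the corollary from \Cref{thm:spatial-spline-mean-rate} by showing that the additional condition makes the last (remainder) term negligible, and then optimizing over $J_m$ and $J_s$.

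\textbf{Step 1: the remainder term.} Write the stochastic term as
\[
\sqrt{J_s^d\log(K_sK_m)\{J_m/(m_n\bar N_n)+1/m_{\mathrm{eff},n}\}}
=\sqrt{J_s^d\log(K_sK_m)\,(\Theta_n+J_m/\bar N_n)/m_n},
\]
using $m_{\mathrm{eff},n}=m_n/\Theta_n$. Divide the remainder term $J_s^dJ_m\delta_n^{-d}b_n^d\log(K_sK_m)/m_n$ by this quantity. The ratio is
\[
\delta_n^{-d}b_n^dJ_m\bigl\{J_s^d\log(K_sK_m)/[m_n(\Theta_n+J_m/\bar N_n)]\bigr\}^{1/2}.
\]
By the extra hypothesis this ratio tends to zero, so the remainder is of smaller order than the stochastic term and can be dropped. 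Hence
\[
R_{m,n}=O_p\Bigl(J_s^{-q_u^\ast}+J_m^{-q_m^\ast}+L_n^{-1}+\sqrt{J_s^d\log(K_sK_m)\{J_m/(m_n\bar N_n)+1/m_{\mathrm{eff},n}\}}\Bigr).
\]

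\textbf{Step 2: the time-direction boundary for fixed $J_s$.} Suppress logarithms and treat $J_s$ as given.
\begin{itemize}
\item The observation-level and curve-level components are equal when $J_m/\bar N_n\asymp\Theta_n$.
\item Below the boundary, balance $J_m^{-q_m^\ast}$ against $(J_s^dJ_m/(m_n\bar N_n))^{1/2}$. This gives $J_m\asymp(m_n\bar N_n/J_s^d)^{1/(2q_m^\ast+1)}$ and the stated rate $(m_n\bar N_n/J_s^d)^{-q_m^\ast/(2q_m^\ast+1)}$. This choice is valid as long as the curve-level term stays dominated.
\item In the dense regime, the error floor is $(J_s^d/m_{\mathrm{eff},n})^{1/2}$. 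It is attained with $J_m\asymp(m_{\mathrm{eff},n}/J_s^d)^{1/(2q_m^\ast)}$, which makes the bias match the floor.
\item This $J_m$ keeps the observation term subordinate exactly when $J_m/\bar N_n\lesssim\Theta_n$, that is, when $\bar N_n\gtrsim(m_{\mathrm{eff},n}/J_s^d)^{1/(2q_m^\ast)}/\Theta_n$. This is the stated boundary, and at or above it the dense rate follows.
\end{itemize}
The chosen $J_m$ must also satisfy the rate conditions of \Cref{thm:spatial-spline-mean-rate}. I take this as part of the hypotheses; it holds whenever the resulting rates tend to zero.

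\textbf{Step 3: joint minimization over $J_s$ and $J_m$.} Assume $L_n^{-1}$ is of lower order.
\begin{itemize}
\item In the sparse-time regime, set $J_s^{-q_u^\ast}\asymp J_m^{-q_m^\ast}\asymp(J_s^dJ_m/N)^{1/2}$ with $N=m_n\bar N_n$.
\item Writing $J_s=r^{-1/q_u^\ast}$ and $J_m=r^{-1/q_m^\ast}$, the balance condition becomes $r^2\asymp r^{-d/q_u^\ast-1/q_m^\ast}/N$.
\item Solving gives $r\asymp N^{-q_u^\ast q_m^\ast/(2q_u^\ast q_m^\ast+dq_m^\ast+q_u^\ast)}$, which is the stated rate.
\item In the dense regime, minimize $J_s^{-q_u^\ast}+(J_s^d/m_{\mathrm{eff},n})^{1/2}$. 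This gives $J_s\asymp m_{\mathrm{eff},n}^{1/(2q_u^\ast+d)}$ and the rate $m_{\mathrm{eff},n}^{-q_u^\ast/(2q_u^\ast+d)}$.
\end{itemize}

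\textbf{Main obstacle.} The expected difficulty is bookkeeping rather than substance. One must check that the optimizing choices of $J_s$ and $J_m$ remain compatible with the Gram-matrix condition of \Cref{thm:spatial-spline-mean-rate}, with $J_sh_{U,n}\to0$, and with the negligibility hypothesis. Because $b_n$ depends on $K_sK_m$, that hypothesis must be verified at the chosen resolutions rather than treated as independent of them. I would state the rates under the implicit requirement that these conditions hold at the optimizing choices.
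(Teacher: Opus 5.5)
Your proposal is correct and follows essentially the paper's route. The additional hypothesis is exactly the ratio of the remainder term to the stochastic term (the paper applies it through the reduced score bound in the spatial-spline score lemma), and your boundary derivation and joint $(J_s,J_m)$ balancing are the same bias--variance algebra, since locating the boundary via $J_m/\bar N_n\lesssim\Theta_n$ at the dense-regime choice of $J_m$ is equivalent to the paper's equating of the sparse-regime rate with the curve-level term.
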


The two local-mean procedures have the same leading stochastic structure when their spatial resolutions are matched.  Setting $a_n\asymp J_s^{-1}$ makes $m_{\mathrm{eff},n}a_n^d$ and $m_{\mathrm{eff},n}/J_s^d$ equivalent, and hence aligns their time-direction phase-transition boundaries up to logarithmic factors.  Their distinction lies in approximation: a local-constant kernel has spatial bias $a_n^{\gamma_u}$, whereas a tensor-product spline achieves $J_s^{-q_u^\ast}$ under the stronger anisotropic smoothness condition.

\section{Simulation study}\label{sec:simulation}

In this section, we investigate the finite sample properties of the proposed estimators.  
All reported Monte Carlo averages are based on $200$ replications.  We use the values  $m_n=400$, $900$, $1600$, $2500$, $3600$, $6400$, and $10000$ for the number of spatial locations.  We first consider the problem of estimating a stationary mean and then retain the same spatial and temporal sampling designs for the problem of estimating a locally stationary mean.

\subsection{Stationary mean}\label{sec:sim-stationary}

In dimension $d=2$, we generated the data from model (\ref{eq:model}) with  $m\left(s, t\right)=2\sin\left(\pi t\right)$, $\sigma(s,t) = 0.3$, $T_{ij}\stackrel{\mathrm{iid}}{\sim}\operatorname{Unif}\left[0,1\right]$, and  independent  standard normal distributed errors $\epsilon_{ij}$.  The time points and measurement errors are independent of the latent process.  At site $S_i$, we define the random field  by
$X\left(S_i,t\right)=\sum_{k=1}^4\xi_{ik}\phi_k\left(t\right)$, where
\begin{align*}
\left(\phi_1\left(t\right),\phi_2\left(t\right),\phi_3\left(t\right),\phi_4\left(t\right)\right)^\top
=\sqrt2\left(\sin\left(2\pi t\right),\cos\left(2\pi t\right),\sin\left(4\pi t\right),\cos\left(4\pi t\right)\right)^\top, 
\end{align*}
and, for each $k$, $\left(\xi_{1k},\ldots,\xi_{m_n,k}\right)^\top$ is a centered Gaussian vector with 
$\operatorname{Cov}\left(\xi_{ik},\xi_{\ell k}\right)=2^{-k}\rho_{r,k}\left(S_i-S_\ell\right)$ for  some function $\rho_{i,k}$. The four score vectors are mutually $i$ independent.   Writing $h=S_i-S_\ell$, the five choices for the function  $\rho_{i,k}$ are
\begin{align*}
\rho_{0,k}\left(h\right)&=\mathbf 1\left(h=0\right),  \tag{\text{D}0}\\
\rho_{1,k}\left(h\right)&=\exp\left(-\|h\|/\alpha_k\right),  \tag{\text{D}1}\\
\rho_{2,k}\left(h\right)&=\exp\left(-\|h\|^2/\alpha_k^2\right), \tag{\text{D}2}\\
\rho_{3,k}\left(h\right)&=\left(1+\sqrt3\|h\|/\alpha_k\right)\exp\left(-\sqrt3\|h\|/\alpha_k\right), \tag{\text{D}3}\\
\rho_{4,k}\left(h\right)&=\exp\!\left[-\left\{\left(h_1/a_{1k}\right)^2+\left(h_2/a_{2k}\right)^2\right\}^{1/2}\right]. \tag{\text{D}4}
\end{align*}
Here $\left(\alpha_1,\ldots,\alpha_4\right)^\top=\left(0.4,0.6,0.8,1.0\right)^\top$, while
$\left(a_{1k},a_{2k}\right)^\top$ equals $\left(0.3,0.8\right)^\top$, $\left(0.4,1.0\right)^\top$, $\left(0.5,1.2\right)^\top$, and $\left(0.6,1.4\right)^\top$ for $k=1,\ldots,4$.  Thus the scenario (D0) gives scores with independent components, and (D1) - (D4) use exponential, Gaussian, Mat\'ern-$3/2$, and anisotropic exponential covariance functions.

For each $m_n$, the sites form a $\sqrt{m_n}\times\sqrt{m_n}$ grid.  In the sparse space regime, the nearest-neighbor distance  of the spatial locations  is one and the spatial area is of order $m_n$.  In the dense space regime, the spatial locations are  given by $ \big (a/m_n^{1/4},b/m_n^{1/4} \big )^\top$, so the nearest-neighbor distance is $m_n^{-1/4}$ and the volume of the spatial domain is of order $m_n^{1/2}$.  We therefore have $m_{\mathrm{eff},n}=m_n$ under (D0) and under sparse-space scenarios (D1)--(D4), and $m_{\mathrm{eff},n}=m_n^{1/2}$ under dense-space scenarios (D1)--(D4).

For the sparse time regime, the numbers of time points at each locations are sampled uniformly from the integers between $2m_n^{1/5}$ and $4m_n^{1/5}$, while in the dense time regime are  sampled uniformly from the integers between $1.5m_n^{1/2}$ and $2.5m_n^{1/2}$. They are kept fixed for each simulated scenario, and the number of time points is the same for the sparse and the dense space regime.   The additional simulation experiment in \SuppCref{app:additional-simulations-meff} replaces $m_n$ by $m_{\mathrm{eff},n}$ in these two ranges.

The stationary estimator \eqref{eq:mean-est} uses cubic B-splines with dimension proportional to $ (m_{\mathrm{eff},n}\bar N_n)^{1/5}$ in the sparse time and to $m_{\mathrm{eff},n}^{1/4}$ in the dense time regime. The sup-norm of $ \hat m_{J_m} -m $ is calculated as a maximum over 101 equally spaced points.

\begin{figure}[!h]
\centering
\includegraphics[width=0.82\textwidth]{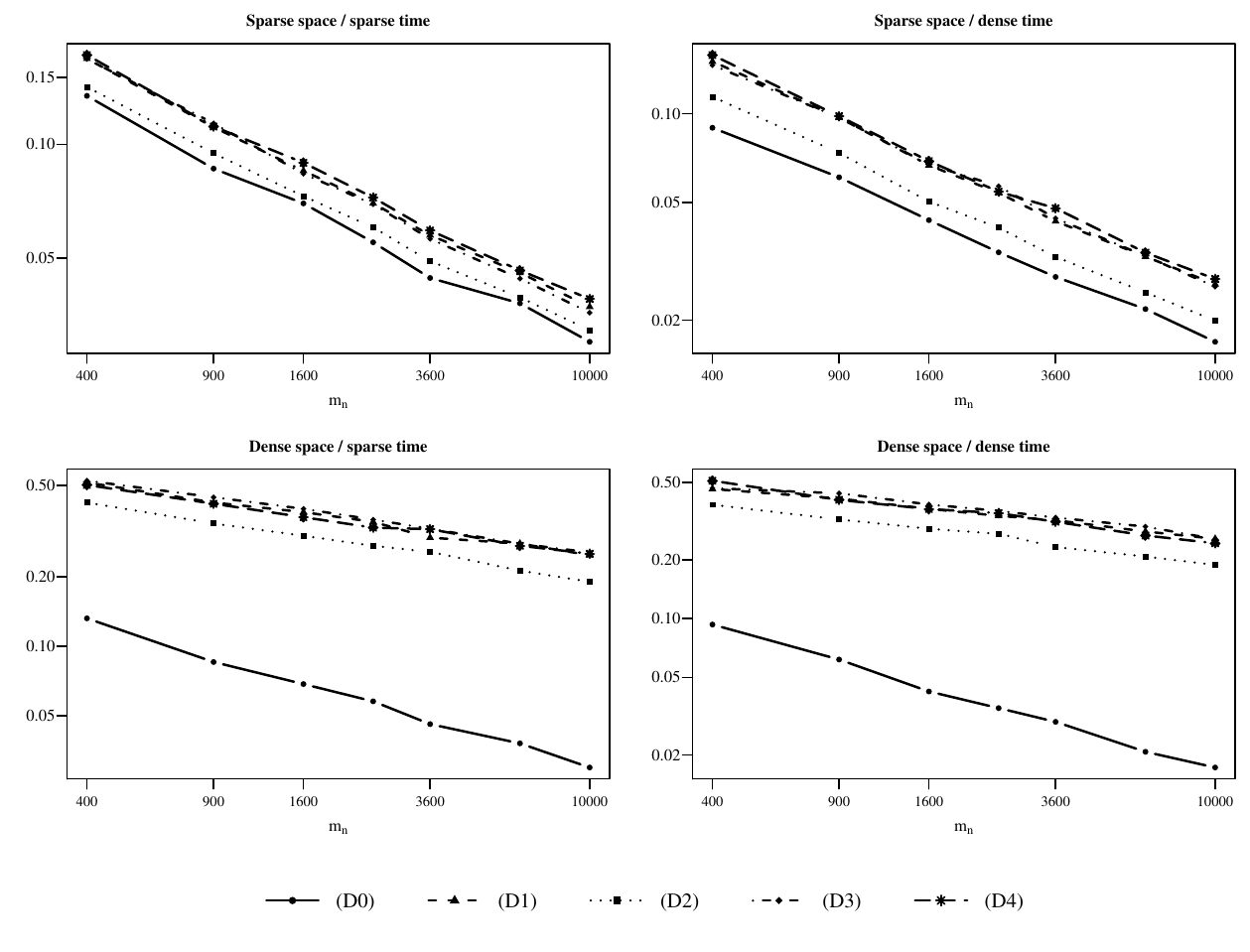}
\vspace{0pt}
\caption{\it Simulated sup-norm error for the  estimator \eqref{eq:mean-est} for different values of $m_n$ under scenario (D0)--(D4).}
\label{fig:stationary-mean-rates-(D0)-(D4)}
\end{figure}

In Figure \ref{fig:stationary-mean-rates-(D0)-(D4)} we display the simulated sup-norm error for the different cases. The detailed numbers  are shown in Table S.1 of supplementary materials. We observe  that the errors are decreasing with increasing  $m_n$ for all five dependence choices.  In scenario (D0), changing the distance between sites has little effect because the scores are independent.  Under (D1)--(D4), the errors remain close to the independent case in sparse-space regime, whereas the differences become larger and decrease more slowly in dense-space regime because nearby sites are more strongly correlated and $m_{\mathrm{eff},n}$ grows as $m_n^{1/2}$.  Dense time sampling decreases the error in the sparse space regime and under (D0).  In the dense space regime (lower panels in Figure \ref{fig:stationary-mean-rates-(D0)-(D4)}),  the sup-norm errors in models (D1)--(D4) are close and occasionally reverse because additional observations within a curve reduce the measurement error but not the spatially dependent part of the process.  These patterns agree with spatial functional results in which mean estimation depends on both spatial dependence and the distances between sites \citep{HoermannKokoszka2013spatial,ZhangLi2022}.

\subsection{Locally stationary mean}\label{sec:sim-local}

In this section, we investigate the performance of the estimator of a locally stationary mean using the same sites, observation times, measurement errors, component  and dependence functions  of scenario (D0)--(D4), and we set
$U_i=((a-1)/(\sqrt{m_n}-1),(b-1)/(\sqrt{m_n}-1))^\top$. The rescaled mean function is given by 
$M\left(u,t\right)=2\sin\left(\pi t\right)+0.75\left(u_1-1/2\right)\cos\left(2\pi t\right)$, and the variance of marginal scores $\xi_{ik}$ is changed from $2^{-k}$ to
$\lambda_k\left(U_i\right)=2^{-k}\left\{1+0.4\left(U_{i1}+U_{i2}-1\right)\right\}$, which gives 
$\left\{\lambda_k\left(U_i\right)\lambda_k\left(U_\ell\right)\right\}^{1/2}
\rho_{r,k}\left(S_i-S_\ell\right) $ for $
\operatorname{Cov}\left(\xi_{ik},\xi_{\ell k}\right)$.

For the local-kernel estimator \eqref{hol3}, we evaluate the sup-norm of 
$\hat M_{J_m}\left(u_0,\cdot \right)-M\left(u_0,\cdot \right)$ at point $u_0=\left(0.5,0.5\right)^\top$ over the same 101 equally spaced  time points.  At this location, $M\left(u_0,t\right)=m\left(t\right)$, so the stationary and local targets have temporal shape.  The bandwidth is proportional to $(m_{\mathrm{eff},n}\bar N_n)^{-2/9}$ in the sparse time and $m_{\mathrm{eff},n}^{-1/4}$ in the dense time regime.  The time dimension follows the stationary rule after replacing $m_{\mathrm{eff},n}$ by $m_{\mathrm{eff},n}a_n^2$.

For comparison over a spatial region, we evaluate the local-kernel estimator \eqref{hol3} and spatial-spline estimator \eqref{hol6} of the grid $\left\{0.2,0.35,0.5,0.65,0.8\right\}^2$ and 
$101$ equally spaced time points, and take the maximum error over all $25\times101$ points.  For the spatial spline, $K_s$ is $\max\left\{4,\left\lceil0.5m_{\mathrm{eff},n}^{1/4}\right\rceil\right\}^2$, and its time dimension follows the stationary rule with $m_{\mathrm{eff},n}$ replaced by $m_{\mathrm{eff},n}/K_s$.

Figure \ref{fig:local-mean-rates-(D0)-(D4)} shows the sup-norm error of the estimator (\ref{hol3}) and the detailed results are shown in Table S.2 in the supplementary materials. We observe the same effect of spatial dependence as in the stationary experiment.  Errors decrease rapidly in the sparse space regime but more slowly in dense space regime under (D1)--(D4).  Dense time sampling gives a clearer reduction of the error because the estimator uses only sites near the point $u_0$.

\begin{figure}[!h]
\centering
\includegraphics[width=0.82\textwidth]{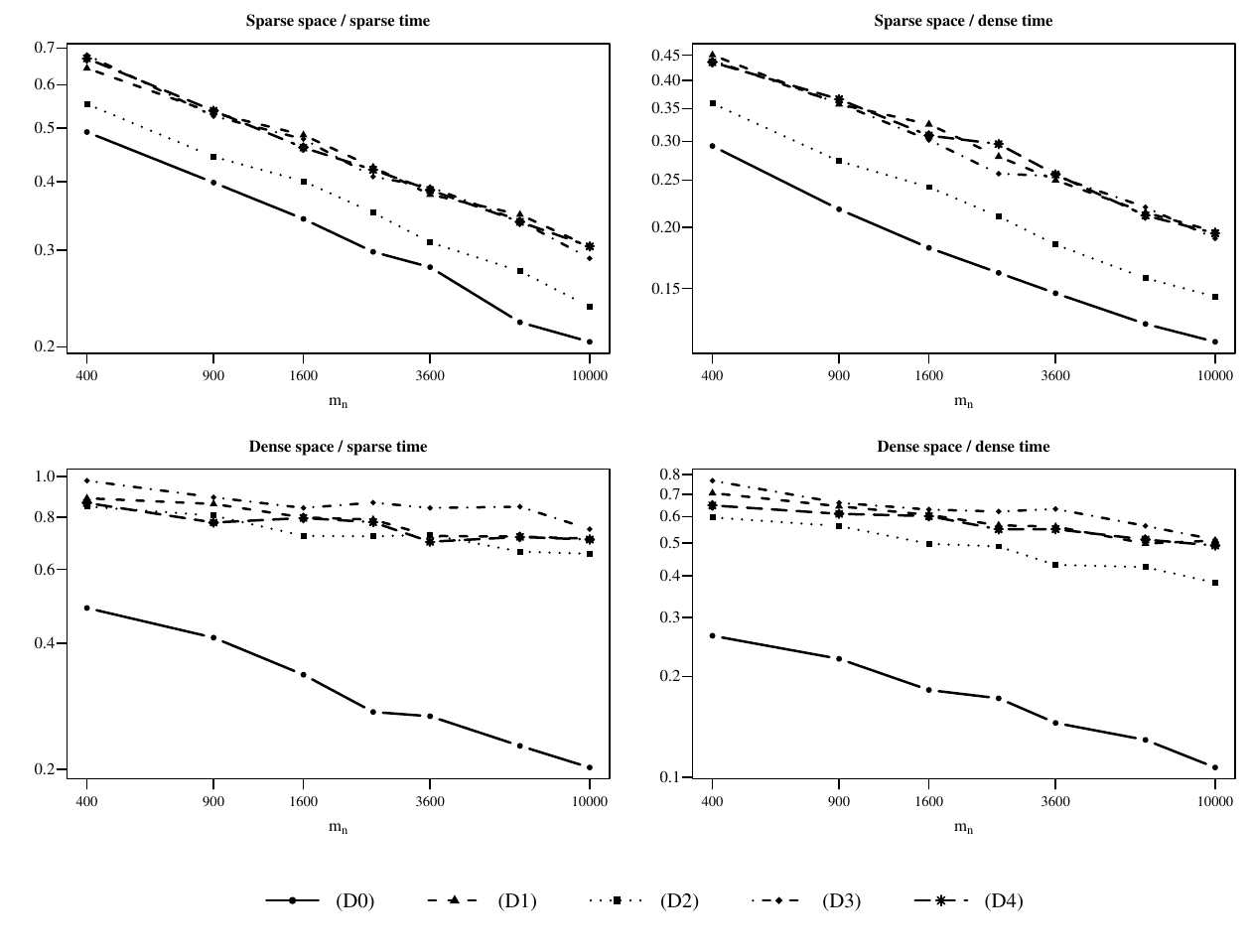}
\vspace{0pt}
\caption{\it Simulated  sup-norm errors for the local-kernel estimator \eqref{hol3} over all seven values of $m_n$ under scenarios (D0)--(D4).}
\label{fig:local-mean-rates-(D0)-(D4)}
\end{figure}

In \SuppCref{tab:mean-sim-local-methods}, we display the simulated sup-norm error of the estimators (\ref{hol3}) and (\ref{hol6}). We observe that the errors also decrease substantially in the sparse space regime.  The maximal error of the spatial spline estimator decreases faster than that of the local-kernel estimator because it fits the complete surface using all sites.  In the dense space regime, both methods improve more slowly.  In the sparse time regime, the spatial spline estimators becomes more accurate as $m_n$ increases, whereas under dense time regime the local-kernel estimator remains more accurate over the reported range.

Finally, we study the robustness of the estimators with respect to the score  distribution. More specifically, we repeated the experiment for scenario (D1) replacing either the Gaussian scores or the Gaussian measurement errors by standardized uniform, Laplace, or $t_5$ variables. 
Note that the Laplace and \(t_5\) distributions do not satisfy the sub-Gaussian assumptions of the theory. For the measurement-error experiment, each distribution was considered with $\sigma\left(s,t\right)=0.3$ and with $\sigma\left(s,t\right)=0.3\left\{1+0.5\sin\left(\pi t\right)\right\}$.  The results for the four score distributions are displayed in \SuppCref{tab:mean-sim-robustness-latent}  for the stationary  and the locally stationary mean function, where   $m_n=10000$. We do not observe substantial differences for the four score distributions.  Corresponding results for alternative distributions of  the measurement-error  are given in \SuppCref{tab:mean-sim-robustness-error-hom,tab:mean-sim-robustness-error-het} and show a similar picture. Therefore, the main differences in these experiments are due to spatial sampling and dependence rather than to the particular score or error distribution.

\section{Applications}\label{sec:applications}

We illustrate the application of the estimators in three public environmental data sets: North Atlantic sea-surface temperature data (NOAA OISST v2.1) representing spatially and temporally dense data, California ground-level
ozone  data  (EPA AQS ozone)  representing spatially sparse and temporally dense data;    California stream
nitrate  data  (WQP/USGS nitrate) representing representing spatially and temporally sparse data. The latter example is discussed in Section S.1.2 of the supplementary materials.  The spatial and temporal resolutions of the three examples are summarized in \Cref{tab:application-data-summary}.

\begin{table}[!ht]
\centering

\small
\setlength{\tabcolsep}{3.4pt}
\resizebox{\textwidth}{!}{%
\begin{tabular}{lllrc rr}
\toprule
Data set & Period & Spatial layout & $m_n$ & $N_i$: min/med/max & $M_{1,n}$ & Coverage (\%) \\
\midrule
NOAA OISST v2.1 & 2023 & Regular $0.5^\circ$ ocean grid & 2565 & 364/364/364 & 933660 & 99.7 \\
EPA AQS ozone & 2023 & Irregular monitors & 131 & 304/355/363 & 45925 & 96.0 \\
WQP/USGS nitrate & 2021--2023 & Irregular stream sites & 122 & 1/3/28 & 439 & 0.3 \\
\bottomrule
\end{tabular}
}
\caption{\it Sampling structure for three data sets:   Coverage is the percentage of possible calendar-day records after the stated site and quality filters.}
\label{tab:application-data-summary}
\end{table}

\subsection{North Atlantic sea-surface temperature}

The NOAA OISST Version 2.1 Climate Data Record contains North Atlantic sea-surface temperatures and 
serves as  an example of spatially and temporally dense data. It 
combines bias-adjusted satellite, ship, buoy, and Argo observations on a complete $0.25^\circ$ global grid, available daily since September 1981 \citep{NOAAOISSTv21,HuangEtAl2021OISST}.  We analyzed 2023 SST over $30.125^\circ$--$49.625^\circ$N and $79.875^\circ$--$40.375^\circ$W, retained every second grid point, and required at least 360 days per ocean cell.  All 2565 retained cells had 364 observations, yielding a regular $0.5^\circ$ grid with 99.7\% calendar-day coverage.

The day of year was rescaled to the interval $\left[0,1\right]$.  The stationary estimator \eqref{eq:mean-est} used cubic B-splines with 14 interior knots.  The local-kernel estimator \eqref{hol3} used weights $\{1-\left(r/225\right)^2\}^2\mathbf 1\left\{r<225\right\}$, where $r$ is great-circle distance in kilometers.  We compared three targets in the warmer Sargasso Sea region (SS1--SS3) and three near the colder Labrador Current (LC1--LC3), retaining the Gulf Stream target (GS) as a further comparison.

In \Cref{fig:application-oisst} we display the surface of the annual mean temperatures (upper left panel) and the estimators of the mean curves at different spatial locations.  We observe  a clear annual cycle with substantial regional differences.  The Gulf Stream and Sargasso targets (GS and SS1) remain warmer than the Labrador target (LC1) throughout the year.  The annual-mean SS1--LC1 contrast is $18.0^\circ$C, while the seasonal amplitude is about $15.7^\circ$C at LC1 and $9^\circ$C at GS and SS1.

Within the Sargasso region, the three curves differ by at most $1.03^\circ$C and peak within four days in August (lower left panel).  The Labrador curves peak later and differ by up to $3.18^\circ$C, with the southern target generally warmer (lower right panel).  A common mean therefore describes the selected Sargasso locations more closely than the Labrador locations.  Across regions, the differences in level and seasonality favor local rather than basin-wide anomaly baselines.

\begin{figure}[!h]
\centering
\includegraphics[width=0.82\textwidth]{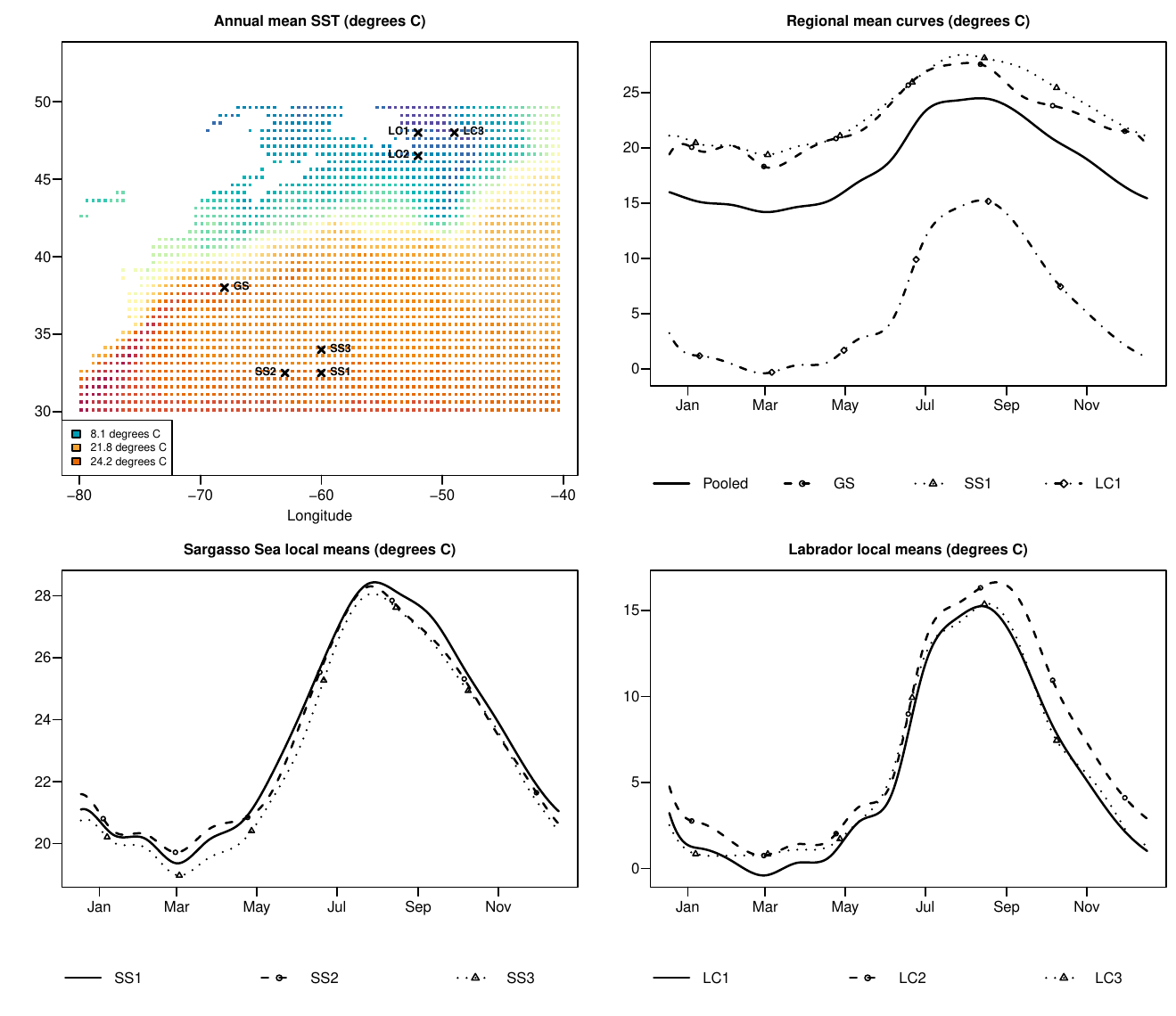}
\vspace{0pt}
\caption{\it North Atlantic SST.  Upper left: annual mean SST on the $0.5^\circ$ grid and seven target locations.  Upper right: pooled mean estimate under stationary assumption  and local mean estimate at location GS, SS1, and LC1.  Lower panels: local mean estimate at Sargasso and Labrador locations, with separate temperature scales.}
\label{fig:application-oisst}
\end{figure}

\subsection{California ground-level ozone}

The U.S. Environmental Protection Agency AQS distributes quality-assured ambient pollution data through AirData \citep{USEPAAirData2026}.  To illustrate the estimators for  spatially sparse and temporally dense data, we used California ozone records (parameter 44201) from the 2023 daily file.  Ozone forms through photochemical reactions involving nitrogen oxides and volatile organic compounds; hot, sunny conditions often raise concentrations, while transport and meteorology produce regional differences \citep{USEPAOzoneBasics2026}.  We retained ``8-HR RUN AVG BEGIN HOUR'' records for the 2015 standard, no exceptional-event flag, and at least 75\% daily completeness.  The response was the daily maximum 8-hour average in ppb.  Duplicate site--day pollutant-occurrence codes were reduced to their maximum, and each site was required to have at least $300$ valid days.  The final sample contains 131 irregular monitors with values $N_i$ ranging from $304$ to $ 363$ and a median of $355$.
The day of year was rescaled to the interval $\left[0,1\right]$.  The stationary estimator \eqref{eq:mean-est} used cubic B-splines with 14 interior knots.  The local-kernel estimator \eqref{hol3} used weights $\{1-(r/175 )^2\}^2\mathbf 1 \{r<175 \}$, where $r$ is great-circle distance in kilometers.  To examine variation within the higher- and lower-ozone regions, we compared targets near Fresno, Bakersfield, and Merced in the Central Valley (CV1--CV3), and San Francisco, San Jose, and Santa Rosa in the Bay Area (SF1--SF3).  We also included a Los Angeles Basin target (LA).

The statewide mean displayed in \Cref{fig:application-aqs} rises into summer and declines in autumn, while local means reveal distinct regional patterns.  Los Angeles peaks near 68 ppb in summer.  The Central Valley curves peak in July at 55.9--65.3 ppb, whereas the Bay Area curves remain lower and peak in April.  The Bay Area curves are also closer to one another: their pairwise root-mean-square differences are 0.64--1.16 ppb, compared with 8.29--13.83 ppb between Valley and Bay targets.  These patterns favor a common mean for the selected Bay locations but reveal appreciable variation within the Valley.  Local means thus distinguish the stronger summer elevation inland from the earlier coastal peak, which the statewide curve obscures.

\begin{figure}[!h]
\centering
\includegraphics[width=0.82\textwidth]{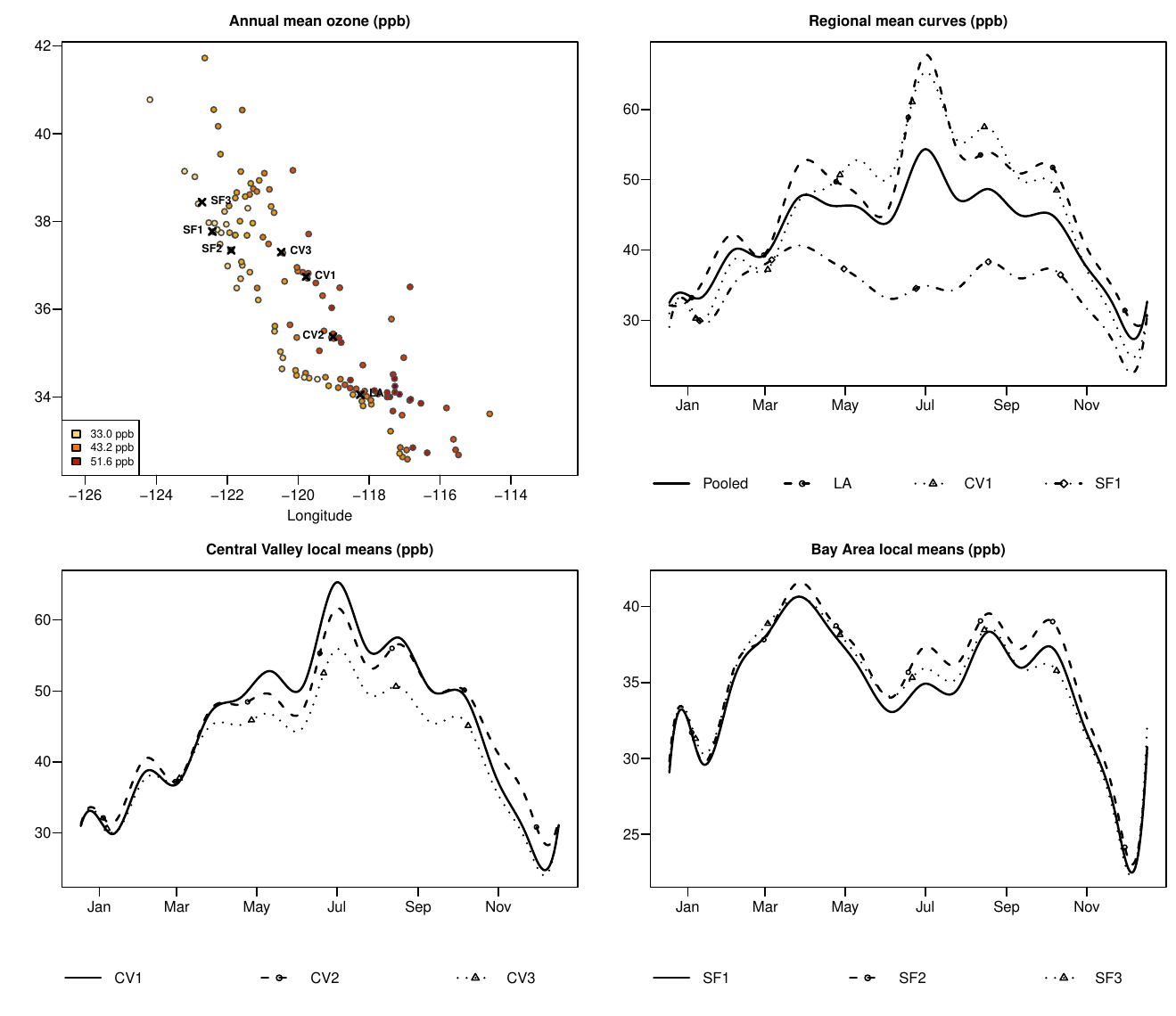}
\vspace{0pt}
\caption{\it California ozone.  Upper left: monitors colored by annual mean daily maximum 8-hour ozone and seven target locations.  Upper right: pooled mean estimate under stationarity assumption and local mean estimates at locations LA, CV1, and SF1.  Lower panels: local mean estimate Central Valley and Bay Area locations, with separate concentration scales.}
\label{fig:application-aqs}
\end{figure}

\section{Concluding remarks}\label{sec:discussion}

This paper develops a unified mean-estimation framework for functional observations whose spatial and temporal sampling densities may vary simultaneously.  Across the stationary, local-kernel, and spatial-spline estimators, independent observation-level variation accumulates according to the nominal number of sampled measurements, whereas curve-level spatial variation is governed by $m_{\mathrm{eff},n}$.  Localization changes these two information scales to $m_na_n^d\bar N_n$ and $m_{\mathrm{eff},n}a_n^d$, while a global spatial spline distributes them across $J_s^d$ spatial degrees of freedom.  Their balance determines the time-sampling phase transition boundary.  The simulations support the resulting information-scale ordering, and the applications illustrate how the estimators recover common temporal structure and regional variation under distinct spatial and temporal sampling designs.

The analysis establishes uniform convergence rate under explicit spatial mixing and design conditions.  Matching lower bounds would clarify the optimality of the dependence-adjusted phase boundaries, while simultaneous confidence bands and spatially dependent bootstrap methods would extend the framework from estimation to inference.  Other useful directions include data-driven spatial resolution, adaptive smoothing under heterogeneous site density, and joint estimation of the mean and covariance structure.

\section*{Declaration of AI use}
For this work, generative AI tools were used to edit language, search the literature, develop code, and help check the authors' mathematical arguments. Before incorporating any AI-assisted material into the manuscript or code, the authors verified it. They take full responsibility for the work.

\section*{Data availability statement}
The sea-surface temperature data are publicly available from \url{https://doi.org/10.25921/RE9P-PT57}.  The ozone data are available from the U.S. Environmental Protection Agency's \url{https://aqs.epa.gov/aqsweb/airdata/download_files.html}, and the nitrate data are available from the \url{https://www.waterqualitydata.us/}. 
\section*{Acknowledgements}
The work of the authors has been partially supported by the Deutsche Forschungsgemeinschaft (DFG): TRR 391 \textit{Spatio-temporal Statistics for the Transition of Energy and Transport} (520388526), and Research Unit 5381 \textit{Mathematical Statistics in the Information Age} (460867398).

\bibliographystyle{abbrvnat}
\spacingset{1.1}
\bibliography{ref}

\clearpage
\setcounter{section}{0}
\setcounter{figure}{0}
\setcounter{table}{0}
\setcounter{equation}{0}
\setcounter{theorem}{0}
\setcounter{page}{1}
\renewcommand{\thesection}{S.\arabic{section}}
\renewcommand{\thefigure}{S.\arabic{figure}}
\renewcommand{\thetable}{S.\arabic{table}}
\renewcommand{\thepage}{S.\arabic{page}}
\renewcommand{\theHsection}{supp.\arabic{section}}
\renewcommand{\theHfigure}{supp.\arabic{figure}}
\renewcommand{\theHtable}{supp.\arabic{table}}
\renewcommand{\theHequation}{supp.\arabic{section}.\arabic{equation}}
\renewcommand{\theHtheorem}{supp.\arabic{section}.\arabic{theorem}}
\section*{Supplementary materials}
\section{Numerical results}
\subsection{Simulation results in Section 4}

The tables in this section give the numerical results discussed in \MainCref{sec:simulation}.  \Cref{tab:mean-sim-primary,tab:mean-sim-local-pointwise} report selected values of the errors plotted in \MainCref{fig:stationary-mean-rates-(D0)-(D4),fig:local-mean-rates-(D0)-(D4)} for the stationary and local-kernel estimators, respectively.  \Cref{tab:mean-sim-local-methods} compares the local-kernel and spatial-spline estimators over the spatial region, and \Cref{tab:mean-sim-robustness-latent} examines different score distributions. 

\begin{table}[!ht]
\centering

\scriptsize
\setlength{\tabcolsep}{2.5pt}

\resizebox{\linewidth}{!}{%
\begin{tabular}{ll*{14}{c|}c}
\hline
\multicolumn{2}{c}{Design}
& \multicolumn{15}{c}{$m_n$} \\
\hline
Space & Time
& \multicolumn{5}{c|}{$400$}
& \multicolumn{5}{c|}{$3600$}
& \multicolumn{5}{c}{$10000$} \\
\hline
& &
(D0) & (D1) & (D2) & (D3) & (D4) &
(D0) & (D1) & (D2) & (D3) & (D4) &
(D0) & (D1) & (D2) & (D3) & (D4) \\
\hline
Sparse & Sparse
& 0.129 & 0.171 & 0.152 & 0.171 & 0.173
& 0.044 & 0.056 & 0.047 & 0.056 & 0.058
& 0.031 & 0.035 & 0.031 & 0.037 & 0.039 \\
Sparse & Dense
& 0.088 & 0.158 & 0.106 & 0.139 & 0.143
& 0.027 & 0.044 & 0.032 & 0.042 & 0.048
& 0.018 & 0.026 & 0.020 & 0.027 & 0.028 \\
Dense & Sparse
& 0.128 & 0.507 & 0.422 & 0.521 & 0.499
& 0.047 & 0.302 & 0.257 & 0.324 & 0.323
& 0.030 & 0.247 & 0.191 & 0.256 & 0.252 \\
Dense & Dense
& 0.091 & 0.522 & 0.384 & 0.467 & 0.509
& 0.029 & 0.302 & 0.233 & 0.329 & 0.313
& 0.017 & 0.226 & 0.188 & 0.256 & 0.243 \\
\hline
\end{tabular}%
}
\caption{\it Simulated sup-norm error for the estimator \eqref{eq:mean-est}.
For each $m_n$, the five subcolumns correspond to the different scenarios (D0)--(D4).}
\label{tab:mean-sim-primary}
\end{table}

\begin{table}[!ht]
\centering

\scriptsize
\setlength{\tabcolsep}{2.5pt}

\resizebox{\linewidth}{!}{%
\begin{tabular}{ll*{14}{c|}c}
\hline
\multicolumn{2}{c}{Design}
& \multicolumn{15}{c}{$m_n$} \\
\hline
Space & Time
& \multicolumn{5}{c|}{$400$}
& \multicolumn{5}{c|}{$3600$}
& \multicolumn{5}{c}{$10000$} \\
\hline
& &
(D0) & (D1) & (D2) & (D3) & (D4) &
(D0) & (D1) & (D2) & (D3) & (D4) &
(D0) & (D1) & (D2) & (D3) & (D4) \\
\hline
Sparse & Sparse
& 0.499 & 0.654 & 0.591 & 0.612 & 0.673
& 0.257 & 0.398 & 0.309 & 0.378 & 0.400
& 0.204 & 0.297 & 0.245 & 0.313 & 0.309 \\
Sparse & Dense
& 0.278 & 0.405 & 0.355 & 0.443 & 0.456
& 0.151 & 0.255 & 0.184 & 0.254 & 0.263
& 0.109 & 0.199 & 0.146 & 0.195 & 0.199 \\
Dense & Sparse
& 0.482 & 0.893 & 0.850 & 0.975 & 0.867
& 0.272 & 0.771 & 0.726 & 0.841 & 0.700
& 0.203 & 0.697 & 0.655 & 0.749 & 0.706 \\
Dense & Dense
& 0.283 & 0.672 & 0.597 & 0.768 & 0.646
& 0.149 & 0.553 & 0.430 & 0.632 & 0.550
& 0.111 & 0.522 & 0.381 & 0.512 & 0.492 \\
\hline
\end{tabular}%
}
\caption{\it Simulated sup-norm error for the local-kernel estimator \eqref{hol3} at $u_0=(0.5,0.5)^\top$.
For each $m_n$, the five subcolumns correspond to the different scenarios (D0)--(D4).}
\label{tab:mean-sim-local-pointwise}
\end{table}

\begin{table}[!ht]
\centering
\setlength{\tabcolsep}{2.4pt}
\begin{tabular}{l|l|r|r|r|r|r|r|r}
\toprule
Space--time & Method & $400$ & $900$ & $1600$ & $2500$ & $3600$ & $6400$ & $10000$ \\
\midrule
Sparse--Sparse & Local kernel & 1.341 & 1.107 & 0.984 & 0.861 & 0.817 & 0.702 & 0.642 \\
Sparse--Sparse & Spatial spline & 0.837 & 0.553 & 0.439 & 0.329 & 0.272 & 0.287 & 0.231 \\
Sparse--Dense & Local kernel & 0.836 & 0.713 & 0.631 & 0.587 & 0.516 & 0.455 & 0.413 \\
Sparse--Dense & Spatial spline & 0.732 & 0.508 & 0.387 & 0.306 & 0.258 & 0.291 & 0.233 \\
Dense--Sparse & Local kernel & 1.584 & 1.484 & 1.457 & 1.478 & 1.436 & 1.430 & 1.395 \\
Dense--Sparse & Spatial spline & 1.795 & 1.605 & 1.479 & 1.424 & 1.339 & 1.261 & 1.156 \\
Dense--Dense & Local kernel & 1.178 & 1.129 & 1.052 & 1.035 & 1.002 & 0.984 & 0.957 \\
Dense--Dense & Spatial spline & 1.693 & 1.584 & 1.445 & 1.406 & 1.301 & 1.211 & 1.146 \\
\bottomrule
\end{tabular}
\caption{\it Simulated uniform sup-norm errors for the local-kernel estimator \eqref{hol3} and spatial-spline estimator \eqref{hol6} under scenario (D1).}
\label{tab:mean-sim-local-methods}
\end{table}

\begin{table}[!ht]

\centering
\setlength{\tabcolsep}{4.2pt}
\begin{tabular}{l|l|r|r|r|r}
\toprule
Space--time & Target & Gaussian & Uniform & Laplace & $t_5$ \\
\midrule
Sparse--Sparse & Stationary & 0.037 & 0.035 & 0.037 & 0.036 \\
Sparse--Sparse & Local & 0.314 & 0.308 & 0.308 & 0.319 \\
Sparse--Dense & Stationary & 0.026 & 0.026 & 0.027 & 0.027 \\
Sparse--Dense & Local & 0.200 & 0.194 & 0.196 & 0.192 \\
Dense--Sparse & Stationary & 0.253 & 0.255 & 0.255 & 0.250 \\
Dense--Sparse & Local & 0.734 & 0.693 & 0.752 & 0.725 \\
Dense--Dense & Stationary & 0.249 & 0.257 & 0.253 & 0.253 \\
Dense--Dense & Local & 0.476 & 0.524 & 0.503 & 0.499 \\
\bottomrule
\end{tabular}
\caption{\it Simulated sup-norm errors for the stationary estimator \eqref{eq:mean-est} and pointwise local-kernel estimator \eqref{hol3} for four different score distributions in  scenario (D1) ($m_n=10000$).}
\label{tab:mean-sim-robustness-latent}
\end{table}
\clearpage
\subsection{California stream nitrate}

The Water Quality Portal (WQP) integrates publicly available discrete water-quality data from the United States Geological Survey (USGS), the Environmental Protection Agency (EPA), and other monitoring organizations \citep{WQP2021}.  We queried the National Water Information System (NWIS) provider only for California stream sites with nitrate results during $2021$--$2023$  representing a spatially sparse and temporally sparse data example.   To keep the measurement definition uniform, we retained accepted dissolved-nitrate results reported under USGS parameter 00618 in mg/L as N, and averaged replicate determinations from the same site and day.  This yielded $122$ sites and $439$ distinct site--day observations.  {Each site had $1-28$ observations over three years, with a median of three, giving a temporally sparse design.}  Nitrate is a mobile form of nitrogen in streams, and excessive nutrient enrichment can contribute to eutrophication \citep{USGSNutrients2019}.

For the right-skewed nitrate concentrations, we fitted $\log\left(1+Y_{ij}\right)$ using four cubic basis functions with no interior knot.  The curves display $\exp\{\text{fitted log-scale curve}\}-1$, a back-transformed summary of the mean on the log scale.  This low-dimensional fit captures broad seasonal variation in the sparse observations.  We compared the stationary statewide curve with a local curve centered at $\left(34.05^\circ\mathrm{N},118.25^\circ\mathrm{W}\right)$ in Southern California, using a 500-km biweight bandwidth.  The local window contains 25 sites and 83 observations over 30 year--month combinations.

\Cref{fig:application-wqp} shows that the observations are concentrated around the Sacramento--San Joaquin Delta.  The statewide fit declines from spring to early autumn before rising toward winter; the Southern California fit instead has a marked spring elevation and approaches the statewide level later in the year.  {The fitted seasonal patterns thus differ between the two spatial scales.}

\begin{figure}[!h]
\centering
\includegraphics[width=0.92\textwidth]{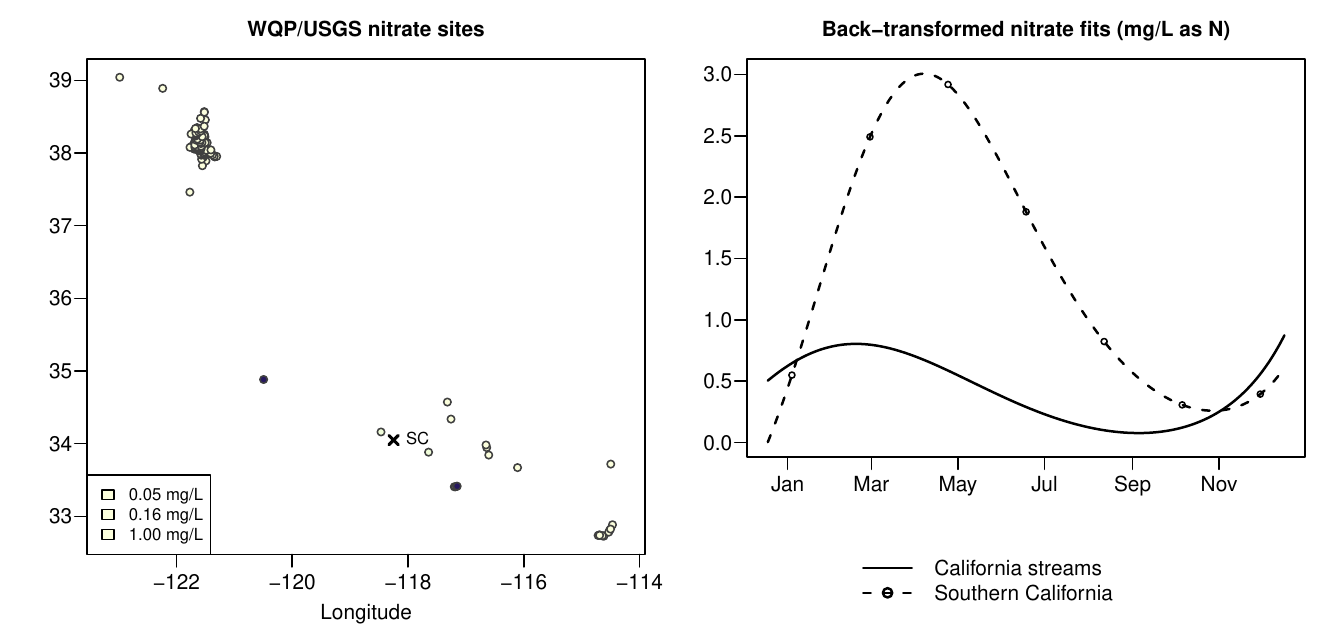}
\caption{\it California WQP/USGS dissolved-nitrate application.  The left panel shows the 122 retained stream sites colored by their back-transformed sitewise average of $\log\left(1+Y_{ij}\right)$; the cross marks the Southern California (SC) target.  The right panel shows the back-transformed stationary and SC local log-scale fits.}
\label{fig:application-wqp}
\end{figure}

\clearpage
\subsection{Additional simulation results}\label{app:additional-simulations}

The stationary and locally stationary models, spatial grids, and tuning rules are those in \MainCref{sec:sim-stationary,sec:sim-local}. All reported errors are averaged over $200$ replications. In the tables, Space--time lists the spatial sampling regime first and the temporal sampling regime second. Under scenarios (D1)--(D4) in the dense space regime, the score vectors $\left(\xi_{1k},\ldots,\xi_{m_n,k}\right)^\top$ were generated on the regular grid by exact block-circulant embedding of the stated covariance functions.

For the measurement-error experiments under (D1), we retain Gaussian score vectors and replace the measurement errors by standardized Gaussian, uniform, Laplace, or $t_5$ variables. We use $m_n=10000$ and the ranges for $N_i$ specified in \MainCref{sec:sim-stationary}. \Cref{tab:mean-sim-robustness-error-hom,tab:mean-sim-robustness-error-het} use $\sigma(s,t)=0.3$ and $\sigma(s,t)=0.3\{1+0.5\sin(\pi t)\}$, respectively. Stationary errors are computed for estimator~(\ref*{eq:mean-est}) over the $101$ time points used in the main article. Local errors use estimator~(\ref*{hol3}) at $u_0=(0.5,0.5)^\top$, taking the maximum over the same time points.

\begin{table}[H]
\centering
\caption{Sup-norm errors for the stationary estimator~(\ref*{eq:mean-est}) and local-kernel estimator~(\ref*{hol3}) at $u_0=(0.5,0.5)^\top$ under (D1) with four measurement-error distributions and $\sigma(s,t)=0.3$, at $m_n=10000$, with $N_i$ determined by $m_n$.}
\label{tab:mean-sim-robustness-error-hom}
\small
\setlength{\tabcolsep}{4.2pt}
\begin{tabular}{l|l|r|r|r|r}
\toprule
Space--time & Target & Gaussian & Uniform & Laplace & $t_5$ \\
\midrule
Sparse--Sparse & Stationary & 0.038 & 0.038 & 0.037 & 0.037 \\
Sparse--Sparse & Local & 0.316 & 0.306 & 0.305 & 0.318 \\
Sparse--Dense & Stationary & 0.028 & 0.027 & 0.028 & 0.027 \\
Sparse--Dense & Local & 0.198 & 0.200 & 0.208 & 0.196 \\
Dense--Sparse & Stationary & 0.249 & 0.253 & 0.246 & 0.261 \\
Dense--Sparse & Local & 0.687 & 0.718 & 0.674 & 0.690 \\
Dense--Dense & Stationary & 0.249 & 0.244 & 0.256 & 0.248 \\
Dense--Dense & Local & 0.485 & 0.497 & 0.529 & 0.512 \\
\bottomrule
\end{tabular}
\end{table}

\begin{table}[H]
\centering
\caption{Sup-norm errors for the stationary estimator~(\ref*{eq:mean-est}) and local-kernel estimator~(\ref*{hol3}) at $u_0=(0.5,0.5)^\top$ under (D1) with four measurement-error distributions and $\sigma(s,t)=0.3\{1+0.5\sin(\pi t)\}$, at $m_n=10000$, with $N_i$ determined by $m_n$.}
\label{tab:mean-sim-robustness-error-het}
\small
\setlength{\tabcolsep}{4.2pt}
\begin{tabular}{l|l|r|r|r|r}
\toprule
Space--time & Target & Gaussian & Uniform & Laplace & $t_5$ \\
\midrule
Sparse--Sparse & Stationary & 0.036 & 0.035 & 0.037 & 0.039 \\
Sparse--Sparse & Local & 0.317 & 0.303 & 0.314 & 0.319 \\
Sparse--Dense & Stationary & 0.027 & 0.026 & 0.026 & 0.028 \\
Sparse--Dense & Local & 0.195 & 0.199 & 0.200 & 0.199 \\
Dense--Sparse & Stationary & 0.266 & 0.246 & 0.252 & 0.249 \\
Dense--Sparse & Local & 0.723 & 0.695 & 0.733 & 0.688 \\
Dense--Dense & Stationary & 0.251 & 0.226 & 0.249 & 0.249 \\
Dense--Dense & Local & 0.516 & 0.522 & 0.477 & 0.485 \\
\bottomrule
\end{tabular}
\end{table}

\Cref{tab:mean-sim-robustness-latent} gives the corresponding experiment that changes the score innovation distribution while retaining Gaussian errors with $\sigma(s,t)=0.3$. The score vectors are obtained by multiplying standardized innovations by the same covariance square root. The three experiments were run separately, so their Gaussian columns provide separate comparisons. Across these experiments, the effects of changing the score innovation or error distribution are small relative to the differences between spatial sampling regimes. The Laplace and $t_5$ settings provide finite-sample evidence beyond the sub-Gaussian conditions used in the theory.

\label{app:additional-simulations-meff}
In the main experiment, the two ranges for $N_i$ depend on $m_n$ and are therefore the same across spatial designs at a given $m_n$. Here we replace $m_n$ by $m_{\mathrm{eff},n}$ in those ranges: $N_i$ is sampled uniformly from the integers between $2m_{\mathrm{eff},n}^{1/5}$ and $4m_{\mathrm{eff},n}^{1/5}$ in the sparse time regime, and between $1.5m_{\mathrm{eff},n}^{1/2}$ and $2.5m_{\mathrm{eff},n}^{1/2}$ in the dense time regime, with endpoints rounded down. We use the same values of $m_{\mathrm{eff},n}$ as in \MainCref{sec:sim-stationary}.  This comparison examines whether the effect of spatial dependence persists when the number of observations per site also varies with the spatial design. The models, tuning rules, and error evaluation grids remain unchanged.

\begin{table}[H]
\centering
\caption{Sup-norm errors for the stationary estimator~(\ref*{eq:mean-est})
with $N_i$ determined by $m_{\mathrm{eff},n}$.
For each $m_n$, the five subcolumns correspond to (D0)--(D4).}
\label{tab:mean-sim-primary-meff}
\scriptsize
\setlength{\tabcolsep}{2.5pt}

\resizebox{\linewidth}{!}{%
\begin{tabular}{ll*{14}{c|}c}
\hline
\multicolumn{2}{c}{Design}
& \multicolumn{15}{c}{$m_n$} \\
\hline
Space & Time
& \multicolumn{5}{c|}{$400$}
& \multicolumn{5}{c|}{$3600$}
& \multicolumn{5}{c}{$10000$} \\
\hline
& &
(D0) & (D1) & (D2) & (D3) & (D4) &
(D0) & (D1) & (D2) & (D3) & (D4) &
(D0) & (D1) & (D2) & (D3) & (D4) \\
\hline
Sparse & Sparse
& 0.130 & 0.166 & 0.145 & 0.159 & 0.172
& 0.045 & 0.058 & 0.048 & 0.057 & 0.060
& 0.029 & 0.036 & 0.032 & 0.036 & 0.036 \\
Sparse & Dense
& 0.086 & 0.149 & 0.110 & 0.141 & 0.154
& 0.029 & 0.044 & 0.034 & 0.044 & 0.047
& 0.018 & 0.025 & 0.020 & 0.026 & 0.026 \\
Dense & Sparse
& 0.127 & 0.495 & 0.386 & 0.536 & 0.503
& 0.047 & 0.304 & 0.243 & 0.325 & 0.305
& 0.028 & 0.262 & 0.192 & 0.267 & 0.250 \\
Dense & Dense
& 0.096 & 0.508 & 0.413 & 0.540 & 0.497
& 0.030 & 0.330 & 0.248 & 0.342 & 0.307
& 0.017 & 0.254 & 0.185 & 0.245 & 0.261 \\
\hline
\end{tabular}%
}
\end{table}

\Cref{tab:mean-sim-primary-meff} shows that the stationary errors decrease as $m_n$ increases. The decrease remains slower in the dense space regime under (D1)--(D4), even though the numbers of observations per site are now determined by $m_{\mathrm{eff},n}$.

The comparison at $u_0$ uses the locally stationary model in \MainCref{sec:sim-local}, including the spatially varying mean and score variances. Errors are evaluated at $u_0=(0.5,0.5)^\top$ over $101$ time points. For the regional comparison, both estimators are evaluated on the same $25$ spatial points and $101$ time points as in the main article, taking the maximum over all $25\times101$ combinations.

\begin{table}[H]
\centering
\caption{Simulated sup-norm errors for the local-kernel estimator~(\ref*{hol3})
at $u_0=(0.5,0.5)^\top$, with $N_i$ determined by $m_{\mathrm{eff},n}$.
For each $m_n$, the five subcolumns correspond to (D0)--(D4).}
\label{tab:mean-sim-local-pointwise-meff}
\scriptsize
\setlength{\tabcolsep}{2.5pt}

\resizebox{\linewidth}{!}{%
\begin{tabular}{ll*{14}{c|}c}
\hline
\multicolumn{2}{c}{Design}
& \multicolumn{15}{c}{$m_n$} \\
\hline
Space & Time
& \multicolumn{5}{c|}{$400$}
& \multicolumn{5}{c|}{$3600$}
& \multicolumn{5}{c}{$10000$} \\
\hline
& &
(D0) & (D1) & (D2) & (D3) & (D4) &
(D0) & (D1) & (D2) & (D3) & (D4) &
(D0) & (D1) & (D2) & (D3) & (D4) \\
\hline
Sparse & Sparse
& 0.501 & 0.643 & 0.560 & 0.647 & 0.667
& 0.255 & 0.363 & 0.312 & 0.382 & 0.376
& 0.199 & 0.318 & 0.245 & 0.291 & 0.312 \\
Sparse & Dense
& 0.281 & 0.462 & 0.353 & 0.446 & 0.430
& 0.147 & 0.245 & 0.182 & 0.244 & 0.255
& 0.116 & 0.197 & 0.149 & 0.190 & 0.199 \\
Dense & Sparse
& 0.492 & 0.815 & 0.800 & 0.940 & 0.800
& 0.251 & 0.665 & 0.587 & 0.736 & 0.660
& 0.209 & 0.637 & 0.526 & 0.684 & 0.600 \\
Dense & Dense
& 0.282 & 0.706 & 0.617 & 0.783 & 0.669
& 0.149 & 0.575 & 0.480 & 0.564 & 0.561
& 0.110 & 0.488 & 0.407 & 0.545 & 0.477 \\
\hline
\end{tabular}%
}
\end{table}

\begin{table}[H]
\centering
\caption{Regional sup-norm errors for the local-kernel estimator~(\ref*{hol3}) and spatial-spline estimator~(\ref*{hol6}) under (D1), with $N_i$ determined by $m_{\mathrm{eff},n}$. }
\label{tab:mean-sim-local-methods-meff}
\scriptsize
\setlength{\tabcolsep}{2.4pt}
\begin{tabular}{l|l|r|r|r|r|r|r|r}
\toprule
Space--time & Method$\backslash m_n$ & $400$ & $900$ & $1600$ & $2500$ & $3600$ & $6400$ & $10000$ \\
\midrule
Sparse--Sparse & Local kernel & 1.319 & 1.104 & 0.985 & 0.872 & 0.801 & 0.696 & 0.639 \\
Sparse--Sparse & Spatial spline & 0.831 & 0.564 & 0.423 & 0.329 & 0.268 & 0.301 & 0.234 \\
Sparse--Dense & Local kernel & 0.866 & 0.725 & 0.626 & 0.574 & 0.524 & 0.449 & 0.405 \\
Sparse--Dense & Spatial spline & 0.752 & 0.520 & 0.387 & 0.307 & 0.259 & 0.292 & 0.229 \\
Dense--Sparse & Local kernel & 1.464 & 1.340 & 1.302 & 1.317 & 1.254 & 1.204 & 1.223 \\
Dense--Sparse & Spatial spline & 1.841 & 1.607 & 1.473 & 1.447 & 1.335 & 1.190 & 1.142 \\
Dense--Dense & Local kernel & 1.184 & 1.126 & 1.071 & 1.030 & 1.035 & 0.993 & 0.957 \\
Dense--Dense & Spatial spline & 1.786 & 1.581 & 1.457 & 1.374 & 1.355 & 1.238 & 1.148 \\
\bottomrule
\end{tabular}
\end{table}

\Cref{tab:mean-sim-local-pointwise-meff,fig:local-mean-rates-meff} show the same effect of spatial dependence for the local mean at $u_0$. In the regional comparison in \Cref{tab:mean-sim-local-methods-meff}, in the dense space regime, the spatial-spline estimator becomes more accurate as $m_n$ increases under sparse time sampling, whereas the local-kernel estimator remains more accurate under dense time sampling. These patterns agree with the main experiment, although the errors in the dense space regime change when fewer observations are taken within each site.

\begin{figure}[H]
\centering
\includegraphics[width=0.94\textwidth]{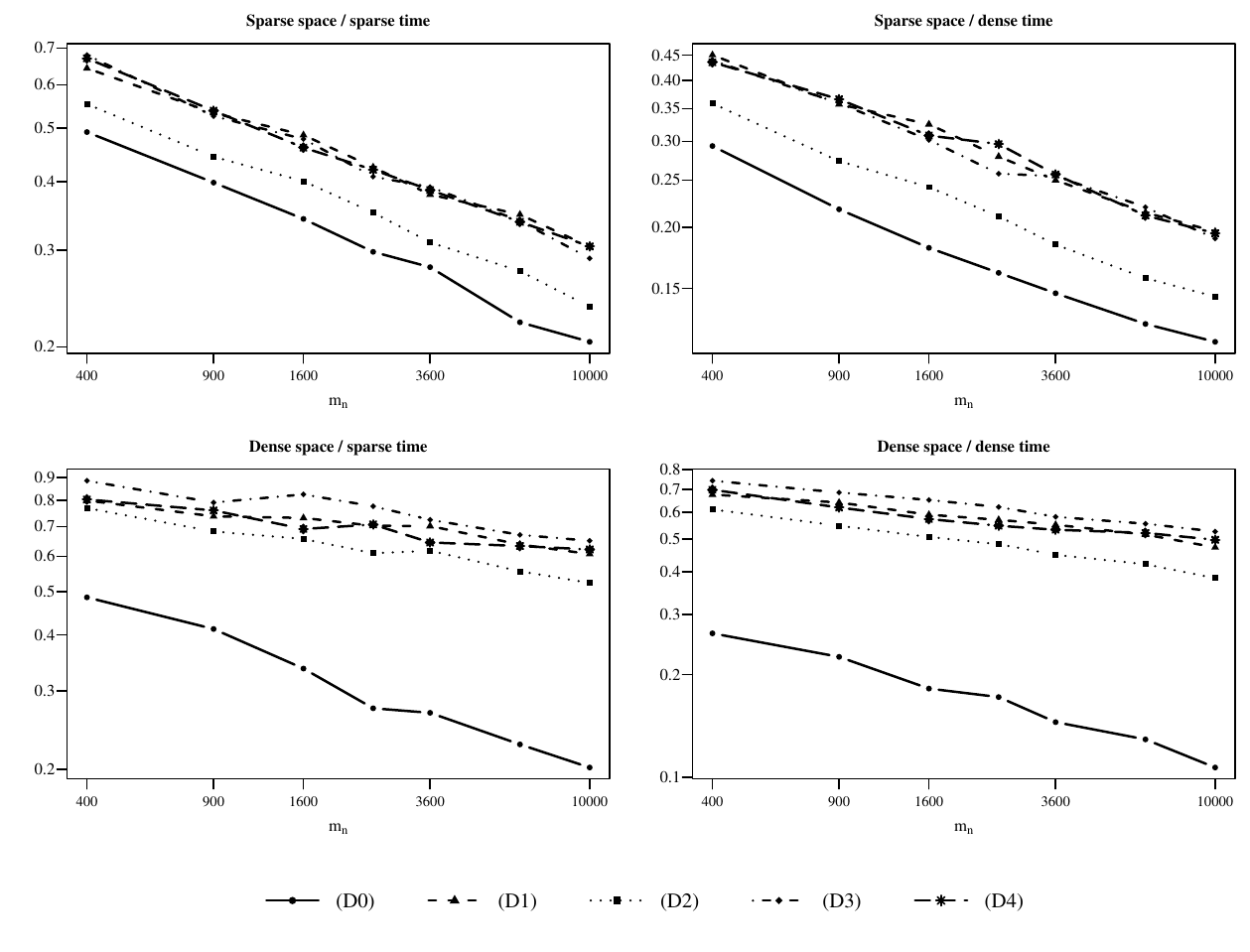}
\caption{Simulated sup-norm errors for the local-kernel estimator~(\ref*{hol3}) at $u_0=(0.5,0.5)^\top$ over all seven values of $m_n$ under (D0)--(D4), with $N_i$ determined by $m_{\mathrm{eff},n}$.}
\label{fig:local-mean-rates-meff}
\end{figure}

The distribution experiments under (D1) use $m_n=10000$ and the alternative ranges for $N_i$ above. We change either the score innovations or the measurement errors, retaining the other part of the model as Gaussian. As in the main experiment, the score comparison uses $\sigma(s,t)=0.3$, and the error comparison uses both the constant and time-varying scales. The three experiments are run separately.

\begin{table}[H]
\centering
\caption{Sup-norm errors for the stationary estimator~(\ref*{eq:mean-est}) and local-kernel estimator~(\ref*{hol3}) at $u_0=(0.5,0.5)^\top$ under (D1) with four score distributions at $m_n=10000$, with $N_i$ determined by $m_{\mathrm{eff},n}$.}
\label{tab:mean-sim-robustness-latent-meff}
\small
\setlength{\tabcolsep}{4.2pt}
\begin{tabular}{l|l|r|r|r|r}
\toprule
Space--time & Target & Gaussian & Uniform & Laplace & $t_5$ \\
\midrule
Sparse--Sparse & Stationary & 0.037 & 0.037 & 0.037 & 0.036 \\
Sparse--Sparse & Local & 0.307 & 0.324 & 0.310 & 0.295 \\
Sparse--Dense & Stationary & 0.027 & 0.028 & 0.027 & 0.026 \\
Sparse--Dense & Local & 0.205 & 0.190 & 0.190 & 0.188 \\
Dense--Sparse & Stationary & 0.255 & 0.252 & 0.244 & 0.236 \\
Dense--Sparse & Local & 0.606 & 0.611 & 0.641 & 0.637 \\
Dense--Dense & Stationary & 0.247 & 0.248 & 0.257 & 0.243 \\
Dense--Dense & Local & 0.499 & 0.491 & 0.512 & 0.474 \\
\bottomrule
\end{tabular}
\end{table}

\begin{table}[H]
\centering
\caption{Sup-norm errors for the stationary estimator~(\ref*{eq:mean-est}) and local-kernel estimator~(\ref*{hol3}) at $u_0=(0.5,0.5)^\top$ under (D1) with four measurement-error distributions and $\sigma(s,t)=0.3$, at $m_n=10000$, with $N_i$ determined by $m_{\mathrm{eff},n}$.}
\label{tab:mean-sim-robustness-error-hom-meff}
\small
\setlength{\tabcolsep}{4.2pt}
\begin{tabular}{l|l|r|r|r|r}
\toprule
Space--time & Target & Gaussian & Uniform & Laplace & $t_5$ \\
\midrule
Sparse--Sparse & Stationary & 0.037 & 0.035 & 0.036 & 0.038 \\
Sparse--Sparse & Local & 0.295 & 0.308 & 0.300 & 0.316 \\
Sparse--Dense & Stationary & 0.027 & 0.025 & 0.027 & 0.026 \\
Sparse--Dense & Local & 0.194 & 0.198 & 0.201 & 0.196 \\
Dense--Sparse & Stationary & 0.261 & 0.254 & 0.248 & 0.258 \\
Dense--Sparse & Local & 0.636 & 0.614 & 0.637 & 0.615 \\
Dense--Dense & Stationary & 0.239 & 0.265 & 0.255 & 0.261 \\
Dense--Dense & Local & 0.514 & 0.491 & 0.484 & 0.516 \\
\bottomrule
\end{tabular}
\end{table}

\begin{table}[H]
\centering
\caption{Sup-norm errors for the stationary estimator~(\ref*{eq:mean-est}) and local-kernel estimator~(\ref*{hol3}) at $u_0=(0.5,0.5)^\top$ under (D1) with four measurement-error distributions and $\sigma(s,t)=0.3\{1+0.5\sin(\pi t)\}$, at $m_n=10000$, with $N_i$ determined by $m_{\mathrm{eff},n}$.}
\label{tab:mean-sim-robustness-error-het-meff}
\small
\setlength{\tabcolsep}{4.2pt}
\begin{tabular}{l|l|r|r|r|r}
\toprule
Space--time & Target & Gaussian & Uniform & Laplace & $t_5$ \\
\midrule
Sparse--Sparse & Stationary & 0.038 & 0.036 & 0.037 & 0.037 \\
Sparse--Sparse & Local & 0.325 & 0.314 & 0.309 & 0.296 \\
Sparse--Dense & Stationary & 0.027 & 0.025 & 0.027 & 0.027 \\
Sparse--Dense & Local & 0.203 & 0.197 & 0.206 & 0.195 \\
Dense--Sparse & Stationary & 0.253 & 0.262 & 0.256 & 0.261 \\
Dense--Sparse & Local & 0.619 & 0.634 & 0.608 & 0.636 \\
Dense--Dense & Stationary & 0.252 & 0.253 & 0.247 & 0.239 \\
Dense--Dense & Local & 0.462 & 0.487 & 0.498 & 0.515 \\
\bottomrule
\end{tabular}
\end{table}
\Cref{tab:mean-sim-robustness-latent-meff,tab:mean-sim-robustness-error-hom-meff,tab:mean-sim-robustness-error-het-meff} show that the differences between spatial sampling regimes remain larger than those between the score or measurement-error distributions. Together with the preceding (D0)--(D4) comparisons, these results indicate that the effect of spatial dependence persists under both choices of the ranges for $N_i$.

\clearpage
\section{Technical lemmas}\label{app:spatial-tools}
\begin{lemma}\label[lemma]{lem:shell}
Under \MainCref{ass:design}, there exists a constant $C>0$ such that, for every site $S_i$ and every integer $u\ge0$,
\begin{equation*}
\#\left\{k:u\le \norm{S_k-S_i}_2<u+1\right\}
\le C\,\delta_n^{-d}\left(1+u\right)^{d-1}.
\end{equation*}
\end{lemma}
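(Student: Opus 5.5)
The plan is a volume-counting argument based on the cube structure of \Cref{ass:design}. Each site $S_k$ lies in its own cube $\Delta_{n,\ell(k)}$, and distinct sites lie in distinct cubes. Each cube has side $\delta_n$ and diameter $\sqrt d\,\delta_n$. So it suffices to bound the number of cubes $\Delta_{n,\ell}$ that contain a point at distance in $[u,u+1)$ from $S_i$. Two cases arise, according to whether $\delta_n$ is small or large compared with $1$. Since $\delta_n$ may be of order one, I would not assume $\delta_n\to0$.

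\emph{Case $\delta_n\le 1$.} If $u\le\|S_k-S_i\|_2<u+1$, then every point of $\Delta_{n,\ell(k)}$ lies in the enlarged shell
\[
A_u=\{x:\max(0,u-\sqrt d\,\delta_n)\le \|x-S_i\|_2< u+1+\sqrt d\,\delta_n\}.
\]
The cubes are disjoint and each has volume $\delta_n^d$, so the count is at most $|A_u|\delta_n^{-d}$. With $\sqrt d\,\delta_n\le\sqrt d$, the shell volume is
\[
|A_u|=\omega_d\{(u+1+\sqrt d)^d-\max(0,u-\sqrt d)^d\}\le C(1+u)^{d-1}.
\]
This follows from the mean value theorem when $u>\sqrt d$, since the shell then has width $1+2\sqrt d$ and radius at most $C(1+u)$. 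When $u\le\sqrt d$, the ball volume is bounded by a constant, which is at most $C(1+u)^{d-1}$. Combining these gives the claim.

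\emph{Case $\delta_n>1$.} Here $\delta_n^{-d}<1$, so the previous argument loses the factor; the bound must instead be of order $\delta_n^{-d}(1+u)^{d-1}$, which is smaller than $(1+u)^{d-1}$. Since the statement quantifies over a constant $C$ uniform in $n$, I would use that \Cref{ass:design} allows the sparse regime only with $\delta_n\asymp1$, i.e.\ $\delta_n\le C_\delta$. The same enlargement argument then gives a count at most $\omega_d\{(u+1+\sqrt d C_\delta)^d-\max(0,u-\sqrt dC_\delta)^d\}\delta_n^{-d}\le C(1+u)^{d-1}\delta_n^{-d}$, with $C$ depending on $d$ and $C_\delta$. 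In fact this single argument with $\sqrt d C_\delta$ in place of $\sqrt d$ covers both cases at once. I would present it that way, noting that $\delta_n$ bounded above is implicit in the two regimes considered.

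The only delicate step is the small-$u$ shell volume estimate. For $u$ below the enlargement radius the shell is a full ball, so it is not thin. Its volume is still bounded by a constant, so the factor $(1+u)^{d-1}\ge1$ absorbs it.
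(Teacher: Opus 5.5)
Your proposal is correct and is essentially the paper's proof: you enlarge the shell by the cell diameter $\sqrt d\,\delta_n$, count the disjoint cells of volume $\delta_n^d$ that fit inside it, and bound the shell volume by $C(1+u)^{d-1}$ using $\delta_n=O(1)$ (mean value theorem for large $u$, a constant for bounded $u$). As you note yourself, the two-case preamble is unnecessary, because the single argument with $\delta_n\le C_\delta$, which is implicit in the paper's two spatial regimes, covers every case.
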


\begin{proof}
Fix $S_i$.  If $u\le \norm{S_k-S_i}_2<u+1$, every point in the design cell containing $S_k$ lies in the enlarged annulus
\begin{equation*}
A_{u,n}\left(S_i\right):=
\left\{s\in\R^d:
\left(u-\sqrt d\,\delta_n\right)_+
\le \norm{s-S_i}_2
<u+1+\sqrt d\,\delta_n
\right\}.
\end{equation*}
The relevant design cells are disjoint and each has volume $\delta_n^d$, so their number is at most $|A_{u,n}\left(S_i\right)|/\delta_n^d$.  By \MainCref{ass:design}, $\delta_n=O\left(1\right)$, and hence
\begin{align*}
|A_{u,n}\left(S_i\right)| =v_d\left[ \left(u+1+\sqrt d\,\delta_n\right)^d -\left(u-\sqrt d\,\delta_n\right)_+^d \right] \le C\left(1+u\right)^{d-1},
\end{align*}
where the last bound follows from the mean-value theorem for large $u$ and by enlarging $C$ for bounded $u$.  Dividing by $\delta_n^d$ proves the result.

\end{proof}

\begin{lemma}\label[lemma]{lem:Theta-order}
Under \MainCref{ass:design,ass:beta}, $\Theta_n\le C\delta_n^{-d}$ and, consequently, $m_{\mathrm{eff},n}\ge cV_n$.
\end{lemma}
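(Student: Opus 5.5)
The plan is to bound the defining sum in \eqref{eq:Theta} by grouping the sites $S_k$ into unit-width shells around a fixed site $S_i$, and then to apply \Cref{lem:shell} to count the sites in each shell. Fix $i$. Since $\beta_X$ is nonincreasing in $r$ (the supremum defining it is taken over a shrinking set of pairs as $r$ grows), every $k\ne i$ with $u\le\|S_k-S_i\|_2<u+1$ satisfies $\beta_X(\|S_i-S_k\|_2)\le\beta_X(u)\le C_\beta(1+u)^{-\alpha_\beta}$ by \Cref{ass:beta}. Summing over the integers $u\ge0$ gives
\begin{equation*}
\sum_{k\ne i}\beta_X\left(\|S_i-S_k\|_2\right)^{1/2}
\le \sum_{u=0}^\infty \#\left\{k:u\le\|S_k-S_i\|_2<u+1\right\}\,C_\beta^{1/2}(1+u)^{-\alpha_\beta/2}.
\end{equation*}

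Inserting the count $C\delta_n^{-d}(1+u)^{d-1}$ from \Cref{lem:shell} bounds the right-hand side by
\begin{equation*}
C\,C_\beta^{1/2}\,\delta_n^{-d}\sum_{u=0}^\infty(1+u)^{d-1-\alpha_\beta/2}.
\end{equation*}
This series converges because $\alpha_\beta>2d$ gives $\alpha_\beta/2-d+1>1$. The bound does not depend on $i$ or $n$, so taking the supremum over $i$ gives $\Theta_n\le 1+C'\delta_n^{-d}$.

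Because $\delta_n=O(1)$ under \Cref{ass:design} (it is either bounded or tends to zero), we have $\delta_n^{-d}\ge c>0$, and the additive $1$ can be absorbed, giving $\Theta_n\le C\delta_n^{-d}$. Finally, $m_n\asymp V_n\delta_n^{-d}$ from \Cref{ass:design} yields $m_{\mathrm{eff},n}=m_n/\Theta_n\ge cV_n\delta_n^{-d}/(C\delta_n^{-d})=c'V_n$.

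The only delicate point is the monotonicity of $\beta_X$, which is needed to replace $\beta_X(\|S_i-S_k\|_2)$ by its value at the shell's inner radius. Should one prefer to avoid it, one can instead apply \Cref{ass:beta} directly to $\|S_i-S_k\|_2\ge u$, since the polynomial bound $(1+r)^{-\alpha_\beta}$ is itself decreasing in $r$. The rest of the argument is routine summation.
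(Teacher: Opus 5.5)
Your proposal is correct and follows the paper's proof essentially step for step. Both arguments decompose into unit shells, count sites per shell with \Cref{lem:shell}, bound $\beta_X$ by the polynomial decay, sum the convergent series $\sum_{u\ge0}(1+u)^{d-1-\alpha_\beta/2}$ using $\alpha_\beta>2d$, and conclude from $m_n\asymp V_n\delta_n^{-d}$. Your explicit absorption of the additive $1$ in $\Theta_n$ via $\delta_n=O(1)$ fills in a detail that the paper leaves implicit.
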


\begin{proof}
By \Cref{lem:shell},
\begin{equation*}
\sum_{k\neq i}\beta_X\!\left(\norm{S_i-S_k}_2\right)^{1/2}
\le
\sum_{u=0}^\infty
\#\left\{k:u\le\norm{S_k-S_i}_2<u+1\right\}
\sup_{r\in\left[u,u+1\right)}\beta_X\left(r\right)^{1/2}.
\end{equation*}
Applying \Cref{lem:shell} and \MainCref{ass:beta},
\begin{align*}
\sum_{k\neq i}\beta_X\!\left(\norm{S_i-S_k}_2\right)^{1/2}
&\le
C\delta_n^{-d}
\sum_{u=0}^\infty \left(1+u\right)^{d-1}\sup_{r\in\left[u,u+1\right)}\left(1+r\right)^{-\alpha_\beta/2}
\le
C\delta_n^{-d}
\sum_{u=0}^\infty \left(1+u\right)^{d-1-\alpha_\beta/2}.
\end{align*}
Because $\alpha_\beta>2d$, the exponent $d-1-\alpha_\beta/2$ is less than $-1$, and the series converges.
Thus, $\Theta_n\le C\delta_n^{-d}$.
Since $m_n\asymp V_n\delta_n^{-d}$ by \MainCref{ass:design}, it follows that $m_{\mathrm{eff},n}\ge cV_n$.

\end{proof}

\begin{proposition}\label[proposition]{prop:Theta-lower}
Assume \MainCref{ass:design,ass:beta} and that there exists $r_0>0$ such that $\beta_X\left(r_0\right)>0$.  In the dense-space regime $\delta_n\to0$, assume additionally \MainCref{ass:full-dimensional}.  Then $\Theta_n\asymp \delta_n^{-d}$ and $m_{\mathrm{eff},n}\asymp V_n$.
\end{proposition}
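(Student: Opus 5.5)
The upper bound $\Theta_n\le C\delta_n^{-d}$ is already \Cref{lem:Theta-order}, so only the matching lower bound $\Theta_n\ge c\,\delta_n^{-d}$ needs proof. Given it, $m_{\mathrm{eff},n}=m_n/\Theta_n\asymp V_n\delta_n^{-d}/\delta_n^{-d}=V_n$ follows from $m_n\asymp V_n\delta_n^{-d}$ in \MainCref{ass:design}. The lower bound will come from a single well-chosen site $S_i$ together with a ball of neighbours at which $\beta_X$ stays bounded away from zero.

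The key monotonicity step is the following. By definition, $\beta_X(r)$ is a supremum over pairs with $\|s-s'\|_2\ge r$, so it is nonincreasing in $r$. Hence $\beta_X(r)\ge\beta_X(r_0)=:b_0>0$ for all $0\le r\le r_0$, and every $k\ne i$ with $\|S_k-S_i\|_2\le r_0$ contributes at least $b_0^{1/2}$ to the sum in \eqref{eq:Theta}. It therefore suffices to find one index $i$ with
\[
\#\{k\ne i:\|S_k-S_i\|_2\le r_0\}\ge c\,\delta_n^{-d}.
\]
One caveat: $\beta_X(r)$ is a supremum over pairs, so it is an upper envelope rather than the pairwise coefficient at the actual pair $(S_i,S_k)$. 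The definition of $\Theta_n$, however, plugs the distance into $\beta_X(\cdot)$ itself, so this argument is exactly what is needed.

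The counting step splits into two cases.

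\emph{Sparse space, $\delta_n\asymp1$.} Here $\Theta_n\ge1\ge c\,\delta_n^{-d}$ holds trivially, since $\delta_n^{-d}$ is bounded. No neighbour counting is needed.

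\emph{Dense space, $\delta_n\to0$.} Choose $i$ so that $S_i$ lies near the centre of $\cS_n$. By \MainCref{ass:full-dimensional}, each side satisfies $L_{r,n}\ge c_LL_n\to\infty$, so for large $n$ the cube $S_i+[-r_0/(2\sqrt d),r_0/(2\sqrt d)]^d$ is contained in $\cS_n$. This cube has side $r_0/\sqrt d$ and lies inside the ball of radius $r_0/2$. It contains at least $c\,(r_0/\delta_n)^d$ full design cells $\Delta_{n,\ell}$, all belonging to $\cI_n$, and each such cell contains exactly one site, lying within distance $r_0/2+\sqrt d\,\delta_n\le r_0$ of $S_i$ once $\delta_n$ is small. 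This gives $\Theta_n\ge1+b_0^{1/2}\,(c\,r_0^d\delta_n^{-d}-1)\ge c'\delta_n^{-d}$.

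The main obstacle is this counting step, specifically making sure the neighbourhood of radius $r_0$ around the chosen site is not cut off by the boundary of $\cS_n$. This is precisely where \MainCref{ass:full-dimensional} enters: without it, one side length could stay small (for instance below $r_0$) and the number of neighbours would scale like a lower power of $\delta_n^{-1}$. The remaining work is the cell-counting volume argument, which mirrors the proof of \Cref{lem:shell}.
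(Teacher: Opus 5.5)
Your proposal is correct and follows essentially the same route as the paper: the upper bound from \Cref{lem:Theta-order}, the trivial case $\delta_n\asymp1$, and, in the dense regime, a central site whose $r_0/2$-neighbourhood lies inside $\cS_n$ by \MainCref{ass:full-dimensional}. That neighbourhood contains at least $c\,\delta_n^{-d}$ complete design cells, and combining this with the monotonicity of $\beta_X$ gives the lower bound. The extra slack $r_0/2+\sqrt d\,\delta_n$ is unnecessary but harmless, since a site in a cell contained in your cube is already within $r_0/2$ of $S_i$.
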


\begin{proof}
The upper bound $\Theta_n\le C\delta_n^{-d}$ follows from \Cref{lem:Theta-order}.  If $\delta_n\asymp1$, then $\Theta_n\ge1\asymp\delta_n^{-d}$, so it remains to consider the dense-space regime $\delta_n\to0$.

By \MainCref{ass:design,ass:full-dimensional}, $\min_rL_{r,n}\to\infty$.  Let $x_n=\left(L_{1,n}/2,\ldots,L_{d,n}/2\right)^\top$, and choose the observed site $S_i$ in the design cell containing $x_n$.  For all sufficiently large $n$, $S_i$ is at distance greater than $r_0$ from every face of $\cS_n$.  The ball $B\left(S_i,r_0/2\right)$ therefore contains at least $c\delta_n^{-d}$ complete design cells, each of which contains one observed site.  Every such site $S_k$ satisfies $\|S_i-S_k\|_2\le r_0/2$.  Since $r\mapsto\beta_X\left(r\right)$ is nonincreasing,
\begin{equation*}
\sum_{k\ne i}\beta_X\left(\|S_i-S_k\|_2\right)^{1/2}
\ge c\delta_n^{-d}\beta_X\left(r_0\right)^{1/2}.
\end{equation*}
Thus $\Theta_n\ge c'\delta_n^{-d}$.  Combining the two bounds gives $\Theta_n\asymp\delta_n^{-d}$, and $m_n\asymp V_n\delta_n^{-d}$ from \MainCref{ass:design} then yields $m_{\mathrm{eff},n}\asymp V_n$.
\end{proof}

For the sparse two-dimensional simulation lattice, $\delta_n\asymp1$ and $V_n\asymp m_n$, so \Cref{lem:Theta-order} gives $m_{\mathrm{eff},n}\asymp m_n$.  For the dense lattice, $\delta_n\asymp m_n^{-1/4}, V_n\asymp m_n^{1/2}$. Whenever the simulated score field satisfies \MainCref{ass:beta} and the short-range nonindependence condition of \Cref{prop:Theta-lower}, that proposition gives $\Theta_n\asymp m_n^{1/2}$ and $m_{\mathrm{eff},n}\asymp m_n^{1/2}$.  For D0, $\Theta_n=1$ by independence.

\begin{lemma}\label[lemma]{lem:cov-sum}
Under \MainCref{ass:time}, let $\mathcal G_i:=\sigma\left(T_{ij},\veps_{ij}:1\le j\le N_i\right)$ and let $\left\{Z_i\right\}_{i=1}^{m_n}$ be real-valued random variables satisfying $Z_i\in \cF_X\left(\left\{S_i\right\}\right)\vee\mathcal G_i$.
Assume $\sup_i\norm{Z_i}_4<\infty$.  Then there exists $C>0$ such that, for every $i$,
\begin{equation*}
\sum_{k=1}^{m_n}|\Cov\left(Z_i,Z_k\right)|
\le C\Theta_n \sup_{1\le \ell\le m_n}\norm{Z_\ell}_4^2,
\end{equation*}
and
\begin{equation}\label{eq:var-sum}
\Var\!\left(\sum_{i=1}^{m_n}Z_i\right)
\le C\Theta_n \sum_{i=1}^{m_n}\norm{Z_i}_4^2.
\end{equation}
\end{lemma}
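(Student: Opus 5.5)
The plan is to reduce the covariance between $Z_i$ and $Z_k$ to the pairwise beta-mixing coefficient of the two underlying latent curves, and then sum over $k$ using the definition \eqref{eq:Theta} of $\Theta_n$. The key covariance inequality is a Davydov/Rio-type bound for beta-mixing (which is dominated by alpha-mixing, $\alpha\le\beta$). For $p=q=4$ it reads
\[
|\Cov(Z_i,Z_k)|\le C\,\beta\{\sigma(Z_i),\sigma(Z_k)\}^{1/2}\,\norm{Z_i}_4\norm{Z_k}_4 ,
\]
since $1-1/4-1/4=1/2$. This exponent $1/2$ is exactly the one appearing in $\Theta_n$.

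The main step, and the one I expect to be the genuine obstacle, is to show that
\[
\beta\{\sigma(Z_i),\sigma(Z_k)\}\le \beta\{\cF_X(\{S_i\})\vee\mathcal G_i,\ \cF_X(\{S_k\})\vee\mathcal G_k\}\le \beta_X(\norm{S_i-S_k}_2)
\]
for $i\ne k$. The first inequality is monotonicity of $\beta$ under sub-$\sigma$-fields. For the second, by \Cref{ass:time} the families $\mathcal G_i$, $\mathcal G_k$ and the field $X$ are mutually independent, since the $(T_{ij},\veps_{ij})$ are i.i.d. across $i$ and independent of $X$. I would then invoke the standard fact that for independent pairs $(\mathcal A_1,\mathcal B_1)$ and $(\mathcal A_2,\mathcal B_2)$ one has $\beta(\mathcal A_1\vee\mathcal A_2,\mathcal B_1\vee\mathcal B_2)\le\beta(\mathcal A_1,\mathcal B_1)+\beta(\mathcal A_2,\mathcal B_2)$. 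Take $\mathcal A_1=\cF_X(\{S_i\})$, $\mathcal B_1=\cF_X(\{S_k\})$, $\mathcal A_2=\mathcal G_i$, $\mathcal B_2=\mathcal G_k$. Here $\beta(\mathcal G_i,\mathcal G_k)=0$, because $\mathcal G_i$ and $\mathcal G_k$ are independent. The required independence is that of $(\mathcal A_1\vee\mathcal B_1)$ from $(\mathcal A_2\vee\mathcal B_2)$, which holds. I would either cite this (Bradley 2005) or prove it via the coupling characterization: couple $X$ at the two sites using Berbee's lemma, with the errors and times left untouched. This gives $\beta\le\beta_X(\norm{S_i-S_k}_2)$.

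For the diagonal term $k=i$, use $|\Cov(Z_i,Z_i)|\le\norm{Z_i}_2^2\le\norm{Z_i}_4^2$. Summing, I get
\[
\sum_k|\Cov(Z_i,Z_k)|\le C\Bigl\{1+\sum_{k\ne i}\beta_X(\norm{S_i-S_k}_2)^{1/2}\Bigr\}\sup_\ell\norm{Z_\ell}_4^2\le C\Theta_n\sup_\ell\norm{Z_\ell}_4^2 ,
\]
which is the first claim.

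For \eqref{eq:var-sum}, I would not use the supremum but instead write
\[
\Var\Bigl(\sum_i Z_i\Bigr)\le\sum_{i,k}|\Cov(Z_i,Z_k)|\le C\sum_{i,k}w_{ik}\norm{Z_i}_4\norm{Z_k}_4 ,
\]
with $w_{ii}=1$ and $w_{ik}=\beta_X(\norm{S_i-S_k}_2)^{1/2}$ for $i\ne k$. Then apply $ab\le(a^2+b^2)/2$ and the symmetry $w_{ik}=w_{ki}$ to obtain
\[
\sum_{i,k}w_{ik}\norm{Z_i}_4\norm{Z_k}_4\le\sum_i\norm{Z_i}_4^2\sum_k w_{ik}\le\Theta_n\sum_i\norm{Z_i}_4^2 .
\]
This is a Schur-test bound, since the row sums of $w$ are at most $\Theta_n$.
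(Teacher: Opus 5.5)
Your proposal is correct and follows essentially the same route as the paper. Both arguments use Rio's covariance inequality with exponent $1/2$ (via $\alpha\le\beta$), the fact that adjoining the independent $\sigma$-fields $\mathcal G_i,\mathcal G_k$ does not raise the pairwise coefficient above $\beta_X(\norm{S_i-S_k}_2)$, the diagonal bound $\Var(Z_i)\le\norm{Z_i}_4^2$, and the symmetrization $ab\le(a^2+b^2)/2$ together with the row-sum bound $\Theta_n$ for the variance inequality. Your appeal to Bradley's subadditivity result for independent pairs of $\sigma$-fields makes precise a step the paper states in one line.
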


\begin{proof}
By \MainCref{ass:time}, the sigma-fields $\mathcal G_i$ are independent across sites and jointly independent of the latent field.  Adding these independent sigma-fields does not increase the absolute-regularity coefficient between the two point-site field sigma-fields.  Hence, for $i\ne k$, the strong-mixing coefficient between
$\cF_X\left(\left\{S_i\right\}\right)\vee\mathcal G_i$ and
$\cF_X\left(\left\{S_k\right\}\right)\vee\mathcal G_k$ is bounded by
$\beta_X\left(\|S_i-S_k\|_2\right)$.  Rio's covariance inequality \citep[Theorem~1.1]{Rio1993} therefore gives
\begin{equation*}
|\Cov\left(Z_i,Z_k\right)|
\le C\,\beta_X\!\left(\norm{S_i-S_k}_2\right)^{1/2}
\norm{Z_i}_4\norm{Z_k}_4,
\qquad i\ne k.
\end{equation*}
For $i=k$, $\Var\left(Z_i\right)\le\norm{Z_i}_4^2$.  Summing over $k$ and using \MainEqref{eq:Theta} proves the first assertion.  For the second, use $\norm{Z_i}_4\norm{Z_k}_4 \le (1/2)\left\{\norm{Z_i}_4^2+\norm{Z_k}_4^2\right\}$ in the double covariance sum and again apply \MainEqref{eq:Theta}.

\end{proof}

For $\left(t,t'\right)\in\left[0,1\right]^2$, define the tensor-product basis
$\bB_J^{\left(2\right)}\left(t,t'\right):=\bB_J\left(t\right)\otimes\bB_J\left(t'\right)\in\mathbb R^{K_J^2}$.

\begin{lemma}\label[lemma]{lem:Bspline-basic}
There exists a constant $C>0$, depending only on the spline order and the quasi-uniformity constants of the knot sequence, such that, for every $J$:

\begin{enumerate}[label=(\roman*),leftmargin=2.2em]
\item $\sup_{t\in\left[0,1\right]}\norm{\bB_J\left(t\right)}_1=1$ and $\sup_{\left(t,t'\right)\in\left[0,1\right]^2}\norm{\bB_J^{\left(2\right)}\left(t,t'\right)}_1=1$;
\item
For each fixed $t$, at most $p$ entries of $\bB_J\left(t\right)$ are nonzero; for each fixed $\left(t,t'\right)$, at most $p^2$ entries of $\bB_J^{\left(2\right)}\left(t,t'\right)$ are nonzero;

\item For all $t,s\in\left[0,1\right]$, $\norm{\bB_J\left(t\right)-\bB_J\left(s\right)}_1\le C J|t-s|$;
\item For all $\left(t,t'\right),\left(s,s'\right)\in\left[0,1\right]^2$, $\norm{\bB_J^{\left(2\right)}\left(t,t'\right)-\bB_J^{\left(2\right)}\left(s,s'\right)}_1\le C J\left(|t-s|+|t'-s'|\right)$.
\end{enumerate}
\end{lemma}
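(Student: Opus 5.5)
The plan is to prove the four assertions of \Cref{lem:Bspline-basic} directly from the standard structure of normalized B-splines on a quasi-uniform knot sequence; see \citet{deBoor2001}. I assume ``normalized'' means the partition-of-unity normalization, so each $B_{k,J}$ is nonnegative and $\sum_k B_{k,J}(t)=1$ on $[0,1]$. Under that reading, (i) is immediate for the univariate basis, since $\norm{\bB_J(t)}_1=\sum_k B_{k,J}(t)=1$. For the tensor product, the entries of $\bB_J^{(2)}(t,t')$ are the products $B_{k,J}(t)B_{k',J}(t')\ge0$. Their sum factorizes as $\{\sum_kB_{k,J}(t)\}\{\sum_{k'}B_{k',J}(t')\}=1$.

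For (ii), I use that each B-spline of order $p$ is supported on $p$ consecutive knot intervals. Hence at any $t$, at most $p$ of the functions $B_{k,J}$ are nonzero; at a knot one may count the half-open convention, which still gives at most $p$. An entry $B_{k,J}(t)B_{k',J}(t')$ of the tensor product is nonzero only if both factors are nonzero. Therefore at most $p\cdot p=p^2$ entries are nonzero.

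For (iii), I treat $p\ge2$ so that the splines are continuous, as in the paper. The derivative formula
\[
B_{k,J}'(t)=(p-1)\Bigl\{\frac{B_{k,J}^{[p-1]}(t)}{\tau_{k+p-1}-\tau_k}-\frac{B_{k+1,J}^{[p-1]}(t)}{\tau_{k+p}-\tau_{k+1}}\Bigr\}
\]
holds almost everywhere, with lower-order B-splines $B^{[p-1]}$ that also form a partition of unity. Quasi-uniformity gives that every knot spacing is of order $J^{-1}$, uniformly. Consequently $\norm{\bB_J'(t)}_1\le 2(p-1)CJ\sum_k B^{[p-1]}_{k,J}(t)\le CJ$ for almost every $t$. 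Each $B_{k,J}$ is Lipschitz, in particular absolutely continuous, so I can write $\bB_J(t)-\bB_J(s)=\int_s^t\bB_J'(x)\,dx$ componentwise. Taking $\ell_1$ norms inside the integral gives $\norm{\bB_J(t)-\bB_J(s)}_1\le CJ|t-s|$.

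For (iv), I use the bilinear splitting
\[
\bB_J(t)\otimes\bB_J(t')-\bB_J(s)\otimes\bB_J(s')=\{\bB_J(t)-\bB_J(s)\}\otimes\bB_J(t')+\bB_J(s)\otimes\{\bB_J(t')-\bB_J(s')\}.
\]
I combine it with the identity $\norm{a\otimes b}_1=\norm a_1\norm b_1$. Part (i) bounds $\norm{\bB_J(t')}_1$ and $\norm{\bB_J(s)}_1$ by one, and part (iii) then bounds each term, giving $CJ(|t-s|+|t'-s'|)$.

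The only point needing care is the uniform constant in (iii). I must check that the constant in the derivative bound depends only on $p$ and the quasi-uniformity ratio, and in particular survives at the boundary. At the boundary the coincident knots shorten the support of a B-spline, but the denominators $\tau_{k+p-1}-\tau_k$ remain bounded below by one knot spacing whenever the corresponding lower-order spline is nonzero. Terms whose lower-order spline vanishes identically are interpreted as zero, by the usual $0/0=0$ convention.
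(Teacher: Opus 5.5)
Your proof is correct and follows the paper's argument: partition of unity and local support for (i)--(ii), an $O(J)$ derivative bound for (iii), and the same bilinear splitting with $\norm{a\otimes b}_1=\norm{a}_1\norm{b}_1$ for (iv). The only variation is in (iii). There you bound $\norm{\bB_J'(x)}_1\le CJ$ directly via the derivative recurrence and the partition of unity of the order-$(p-1)$ splines and then integrate, whereas the paper bounds each $|B_{a,J}'|$ by $CJ$ and sums over the at most $2p$ indices that are nonzero at $t$ or $s$; both give a constant depending only on $p$ and the mesh ratio.
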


\begin{proof}
Properties (i) and (ii) follow from the partition-of-unity property and local support of normalized B-splines \citep[Chapters~IX--XI]{deBoor2001}.
On a quasi-uniform mesh, the derivative of each basis function is bounded by $CJ$; hence $|B_{a,J}\left(t\right)-B_{a,J}\left(s\right)|\le CJ|t-s|$.
Summing over the at most $2p$ basis functions that are nonzero at either $t$ or $s$ proves (iii).
For the tensor-product basis,
\begin{equation*}
\bB_J^{\left(2\right)}\left(t,t'\right)-\bB_J^{\left(2\right)}\left(s,s'\right)
=
\left\{\bB_J\left(t\right)-\bB_J\left(s\right)\right\}\otimes \bB_J\left(t'\right)
+
\bB_J\left(s\right)\otimes\left\{\bB_J\left(t'\right)-\bB_J\left(s'\right)\right\},
\end{equation*}
therefore, (iv) follows from (i) and (iii).

\end{proof}

\begin{lemma}\label[lemma]{lem:gram-pop}
Let
\begin{equation*}
Q_{m,J}:=\E\left\{\bB_J\left(T\right)\bB_J\left(T\right)^\top\right\},
\qquad
Q_{G,J}:=\E\left\{\bB_J^{\left(2\right)}\left(T_1,T_2\right)\bB_J^{\left(2\right)}\left(T_1,T_2\right)^\top\right\},
\end{equation*}
where $T,T_1,T_2$ are independent and identically distributed (i.i.d.) with density $f_T$ from \MainCref{ass:time}.
Then there exist constants $0<c<C<\infty$ such that, for every $J$,
\begin{equation*}
cJ^{-1}\le \lambda_{\min}\left(Q_{m,J}\right)\le \lambda_{\max}\left(Q_{m,J}\right)\le CJ^{-1},
\end{equation*}
\begin{equation*}
cJ^{-2}\le \lambda_{\min}\left(Q_{G,J}\right)\le \lambda_{\max}\left(Q_{G,J}\right)\le CJ^{-2}.
\end{equation*}
Moreover, $\oinf{Q_{m,J}^{-1}}\le CJ, \qquad \oinf{Q_{G,J}^{-1}}\le CJ^2$. \end{lemma}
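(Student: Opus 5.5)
The plan is to reduce both Gram matrices to the classical eigenvalue equivalence for normalized B-splines, and then pass to the inverse bounds through banded-matrix arguments. Write $\norm{\cdot}_{L^2}$ for the Lebesgue $L^2[0,1]$ norm.

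For $Q_{m,J}$, fix $\bbeta\in\mathbb R^{K_J}$ and write
\[
\bbeta^\top Q_{m,J}\bbeta=\int_0^1\{\bB_J(t)^\top\bbeta\}^2f_T(t)\,dt .
\]
By \MainCref{ass:time} this lies between $c_f\norm{\bB_J^\top\bbeta}_{L^2}^2$ and $C_f\norm{\bB_J^\top\bbeta}_{L^2}^2$. The standard stability of the B-spline basis on quasi-uniform knots \citep[Chapter~XI]{deBoor2001} gives
\[
c\sum_a h_a\beta_a^2\le\norm{\textstyle\sum_a\beta_aB_{a,J}}_{L^2}^2\le C\sum_a h_a\beta_a^2 ,
\]
where $h_a\asymp J^{-1}$ is the length of the support of $B_{a,J}$. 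Concretely, the upper bound follows from \Cref{lem:Bspline-basic}(i),(ii) and Cauchy--Schwarz on each knot interval. The lower bound uses de Boor's dual functionals $\lambda_a$, which satisfy $|\beta_a|\le C h_a^{-1/2}\norm{s}_{L^2(\mathrm{supp}B_{a,J})}$; summing with bounded overlap finishes it. This yields $\lambda_{\min},\lambda_{\max}\asymp J^{-1}$.

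For $Q_{G,J}$, independence of $T_1$ and $T_2$ gives $Q_{G,J}=Q_{m,J}\otimes Q_{m,J}$, because
\[
\E\{(\bB(T_1)\otimes\bB(T_2))(\bB(T_1)\otimes\bB(T_2))^\top\}=\E\{\bB\bB^\top(T_1)\}\otimes\E\{\bB\bB^\top(T_2)\}.
\]
The eigenvalues of a Kronecker product are the pairwise products of the factors' eigenvalues, so they are $\asymp J^{-2}$.

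The main obstacle is the $\ell^\infty$ bounds on the inverses, since eigenvalue bounds alone only give $\oinf{Q^{-1}}\le\sqrt{K_J}\,\norm{Q^{-1}}_2$. The plan is to use that $Q_{m,J}$ is banded with bandwidth $p-1$, because entries vanish when supports are disjoint. Its normalized version $JQ_{m,J}$ is symmetric positive definite with spectrum in $[c,C]$. By Demko's theorem (Demko, Moss and Smith), the inverse of such a banded matrix has exponentially decaying entries:
\[
|(JQ_{m,J})^{-1}_{ab}|\le C'\rho^{|a-b|}\quad\text{with }\rho<1 ,
\]
where $C'$ and $\rho$ depend only on $c$, $C$ and $p$. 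Summing each row gives a bound uniform in $J$, hence $\oinf{Q_{m,J}^{-1}}\le CJ$. As an alternative, one could cite de Boor's bound on the $L^\infty$-norm of the $L^2$ spline projector, but Demko's route is self-contained.

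For $Q_{G,J}$, note that $(Q_{m,J}\otimes Q_{m,J})^{-1}=Q_{m,J}^{-1}\otimes Q_{m,J}^{-1}$. The induced $\ell^\infty$ norm, being the maximum absolute row sum, is multiplicative over Kronecker products. This gives $\oinf{Q_{G,J}^{-1}}\le C^2J^2$.

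Throughout, all constants depend only on $p$, $c_f$, $C_f$ and the quasi-uniformity constants, as required.
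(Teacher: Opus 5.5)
Your proposal is correct and follows the paper's proof essentially step for step: you use $f_T$-equivalence plus B-spline coefficient stability for the eigenvalues of $Q_{m,J}$, then Demko--Moss--Smith exponential decay for the banded matrix $JQ_{m,J}$ to get $\oinf{Q_{m,J}^{-1}}\le CJ$, and finally $Q_{G,J}=Q_{m,J}\otimes Q_{m,J}$ with the multiplicativity of $\oinf{\cdot}$ under Kronecker products. The one small difference is how you get the eigenvalue bounds for $Q_{G,J}$. You read them off the Kronecker spectrum, whereas the paper reruns tensor-product stability on $[0,1]^2$; the two are equivalent, and your route is slightly more self-contained.
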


\begin{proof}
For the one-dimensional basis,
\begin{equation*}
x^\top Q_{m,J}x=
\E\left(\sum_{a=1}^{K_J}x_aB_{a,J}\left(T\right)\right)^2
=\int_0^1 \left(\sum_{a=1}^{K_J}x_aB_{a,J}\left(t\right)\right)^2 f_T\left(t\right)\,dt.
\end{equation*}
Because $f_T$ is bounded above and bounded away from zero, this quadratic form is equivalent to the $L^2$ norm of the spline $g_x\left(t\right)=\sum_a x_a B_{a,J}\left(t\right)$.
For normalized B-splines on a quasi-uniform mesh, coefficient stability \citep[Chapter~XI]{deBoor2001} gives
\begin{equation*}
cJ^{-1}\norm{x}_2^2\le \int_0^1 g_x\left(t\right)^2\,dt\le CJ^{-1}\norm{x}_2^2.
\end{equation*}
This proves the eigenvalue bounds for $Q_{m,J}$.
When needed, the same argument on $\left[0,1\right]^2$ gives $cJ^{-2}\norm{x}_2^2\le x^\top Q_{G,J}x\le CJ^{-2}\norm{x}_2^2$. The matrix $Q_{m,J}$ is banded, with bandwidth depending only on $p$, and the inverse of a banded positive-definite matrix has exponentially decaying entries \citep{DemkoMossSmith1984}.  Its eigenvalue scale $J^{-1}$ therefore gives $\oinf{Q_{m,J}^{-1}}\le CJ$.  Independence and the tensor basis give $Q_{G,J}=Q_{m,J}\otimes Q_{m,J}$, so $Q_{G,J}^{-1}=Q_{m,J}^{-1}\otimes Q_{m,J}^{-1}$ and $\oinf{Q_{G,J}^{-1}}=\oinf{Q_{m,J}^{-1}}^2$.  Consequently, $\oinf{Q_{m,J}^{-1}}\le CJ, \qquad \oinf{Q_{G,J}^{-1}}\le CJ^2$. \end{proof}

\begin{lemma}\label[lemma]{lem:spatial-spline-discrete-norm}
Assume \MainCref{ass:balanced,ass:spatial-spline-design}, write
\[
\pi_{i,n}:=\frac{N_i}{M_{1,n}},
\qquad
g_v(u):=\bA_s(u)^\top v,
\]
and let $v\in\mathbb R^{K_s}$.  There are constants $0<c<C<\infty$, independent of $n,J_s$, such that
\begin{equation}\label{eq:spatial-discrete-norm}
c\|g_v\|_{L^2([0,1]^d)}^2
\le
\sum_{i=1}^{m_n}\pi_{i,n}g_v(U_i)^2
\le
C\|g_v\|_{L^2([0,1]^d)}^2,
\end{equation}
and hence
\begin{equation}\label{eq:spatial-coefficient-norm}
cJ_s^{-d}\|v\|_2^2
\le
\sum_{i=1}^{m_n}\pi_{i,n}g_v(U_i)^2
\le
CJ_s^{-d}\|v\|_2^2.
\end{equation}
For every spatial basis coordinate $a$,
\begin{equation}\label{eq:spatial-coordinate-load}
\sum_{i=1}^{m_n}\pi_{i,n}A_a(U_i)\le CJ_s^{-d},
\qquad
\sum_{i=1}^{m_n}\pi_{i,n}A_a(U_i)^2\le CJ_s^{-d}.
\end{equation}
\end{lemma}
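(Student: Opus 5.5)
We prove \eqref{eq:spatial-discrete-norm} first and then deduce \eqref{eq:spatial-coefficient-norm} and \eqref{eq:spatial-coordinate-load} from it. By \MainCref{ass:balanced}, $\pi_{i,n}=N_i/M_{1,n}\asymp 1/m_n$ uniformly in $i$. So it suffices to show that the unweighted discrete norm $m_n^{-1}\sum_i g_v(U_i)^2$ is equivalent to $\|g_v\|_{L^2}^2$ uniformly over the spline space.

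\emph{Cellwise comparison.} Work cell by cell in the tensor-product knot partition. Each cell $R$ is a box of side $\asymp J_s^{-1}$ and volume $\asymp J_s^{-d}$. By \MainCref{ass:spatial-spline-design}, $h_{U,n}\asymp q_{U,n}\asymp m_n^{-1/d}$ and $J_sh_{U,n}\to0$. A volume-packing argument then shows that, for large $n$, each $R$ contains $\#\{i:U_i\in R\}\asymp m_n|R|$ sites. The lower count uses the fill distance: every sub-box of side $2h_{U,n}$ contains a site. The upper count uses disjoint balls of radius $q_{U,n}$. Boundary cells need care: clip to $[0,1]^d$ and use that a cell of side $\gg h_{U,n}$ still contains a sub-box of the required size.

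\emph{Local polynomial norm equivalence.} On $R$, $g_v$ is a polynomial of fixed coordinate degree $p_s-1$. Affinely rescale $R$ to the unit cube. The rescaled sites have fill distance $\asymp J_sh_{U,n}\to0$ and are quasi-uniform. For the finite-dimensional polynomial space $\mathcal P$ on $[0,1]^d$ we have two facts:
\begin{itemize}
\item Markov's inequality gives $\|\nabla P\|_\infty\le C\|P\|_\infty\le C'\|P\|_{L^2}$.
\item Hence the Riemann-sum error satisfies $\bigl|\frac1{n_R}\sum_{U_i\in R}P(U_i)^2-\int P^2\bigr|\le C\,\bar h\,\|P\|_{L^2}^2$, where $\bar h\asymp J_sh_{U,n}$ is the rescaled fill distance.
\end{itemize}
This is the step I expect to be the main obstacle. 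The Riemann-sum claim needs a Voronoi or cell-partition argument with near-equal weights, and the weights are only comparable, not equal. I would circumvent this by proving only two-sided comparability rather than asymptotic equality, as follows. Partition $R$ (rescaled) into sub-boxes of side $\kappa$ for a small fixed constant $\kappa$. Each sub-box contains $\asymp$ its share of sites. On each sub-box, $P^2$ varies by at most $C\kappa\|P\|_\infty^2$ by the Markov bound. Taking $\kappa$ small, compactness of the unit sphere of $\mathcal P$ yields the bounds $c\int_R g_v^2\le |R|\,n_R^{-1}\sum_{U_i\in R}g_v(U_i)^2\le C\int_R g_v^2$. Summing over cells, and using $n_R\asymp m_n|R|$ together with $\pi_{i,n}\asymp m_n^{-1}$, gives \eqref{eq:spatial-discrete-norm}.

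\emph{Coefficient form and coordinate loads.} For \eqref{eq:spatial-coefficient-norm}, combine \eqref{eq:spatial-discrete-norm} with tensor-product B-spline coefficient stability, $\|g_v\|_{L^2}^2\asymp J_s^{-d}\|v\|_2^2$. This is the $d$-dimensional analogue of the argument in \Cref{lem:gram-pop} \citep[Chapter~XI]{deBoor2001}. For \eqref{eq:spatial-coordinate-load}, $A_a$ is a normalized B-spline with $0\le A_a\le1$, supported on $O(1)$ cells, so its support has volume $\le CJ_s^{-d}$. Then $A_a^2\le A_a\le \mathbf 1\{U_i\in\operatorname{supp}A_a\}$. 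The site-count bound above gives $\sum_i\pi_{i,n}\mathbf 1\{U_i\in\operatorname{supp}A_a\}\le Cm_n^{-1}\cdot m_nJ_s^{-d}$, which is the required $CJ_s^{-d}$ for both sums in \eqref{eq:spatial-coordinate-load}.
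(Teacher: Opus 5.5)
Your proof is correct. For \eqref{eq:spatial-coefficient-norm} and \eqref{eq:spatial-coordinate-load} it coincides with the paper's argument: coefficient stability for tensor-product B-splines, and a packing count of sites in $\operatorname{supp}A_a$ together with $0\le A_a^2\le A_a\le 1$ and $\pi_{i,n}\le C/m_n$.

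For the key equivalence \eqref{eq:spatial-discrete-norm}, however, you take a genuinely different route.
\begin{itemize}
\item \textbf{The paper's route is global.} It takes the Voronoi cells $V_{i,n}$ of the rescaled sites and shows $|V_{i,n}|\asymp m_n^{-1}\asymp\pi_{i,n}$, boundary cells included. This gives $\sum_i\pi_{i,n}g(U_i)^2\asymp\|P_ng\|_{L^2}^2$ for the piecewise-constant sampler $P_ng$. It then proves $\|P_ng-g\|_{L^2}\le CJ_sh_{U,n}\|g\|_{L^2}$, using a piecewise mean-value inequality along segments that cross boundedly many mesh cells and local and global spline inverse inequalities. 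The triangle inequality finishes.
\item \textbf{Your route is local.} You work cell by cell, split each rescaled knot cell into boxes of a fixed small side $\kappa$, and use only three facts:
\begin{enumerate}
\item each box holds about $m_n$ times its volume in sites, once $J_sh_{U,n}$ is below a $\kappa$-dependent threshold;
\item Markov's inequality bounds the oscillation of $P^2$ on a box by $C\kappa\|P\|_\infty^2$;
\item $\|P\|_\infty\le C\|P\|_{L^2}$ holds on the fixed polynomial space.
\end{enumerate}
Fixing $\kappa$ small absorbs the oscillation error and yields two-sided comparability.
\end{itemize}

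Your route is more elementary. It needs no Voronoi tessellation, no volume bounds for boundary Voronoi cells, and no gradient inverse inequality, and it delivers exactly the comparability used downstream. The paper's route buys a quantitative refinement: the Voronoi-volume-weighted discrete norm equals $\|g\|_{L^2}^2$ up to a relative error $O(J_sh_{U,n})$.

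You were also right not to rely on your second bullet. Under quasi-uniformity alone, the unweighted average $n_R^{-1}\sum_{U_i\in R}P(U_i)^2$ need not approach $|R|^{-1}\int_RP^2$, because the local site density may vary by a bounded factor. The paper sidesteps this by weighting with $|V_{i,n}|$; your sub-box argument sidesteps it by claiming only comparability.
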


\begin{proof}
Let $V_{i,n}$ be the Voronoi cell of $U_i$, intersected with $[0,1]^d$.  The fill-distance bound gives $\operatorname{diam}(V_{i,n})\le2h_{U,n}$.  The separation radius implies that $V_{i,n}$ contains the intersection of $[0,1]^d$ with a ball of radius $q_{U,n}$ centered at $U_i$, whereas the fill-distance bound places it inside a ball of radius $h_{U,n}$.  Since $h_{U,n}/q_{U,n}\le C$ and $h_{U,n}\asymp q_{U,n}\asymp m_n^{-1/d}$, $c m_n^{-1}\le |V_{i,n}|\le C m_n^{-1}$ uniformly, including boundary cells.  Balanced counts give $c/m_n\le\pi_{i,n}\le C/m_n$, and hence $\pi_{i,n}\asymp|V_{i,n}|$.

Define the piecewise-constant sampler $P_ng$ by $P_ng(u)=g(U_i)$ on $V_{i,n}$.  Then
\[
\sum_i\pi_{i,n}g(U_i)^2\asymp\|P_ng\|_{L^2}^2.
\]
Let $\mathcal Q_{J_s}$ be the quasi-uniform tensor-product spline mesh, whose cell diameters are of order $H_s\asymp J_s^{-1}$.  Since $h_{U,n}/H_s=J_sh_{U,n}\to0$, a segment joining $u\in V_{i,n}$ to $U_i$ crosses only a fixed number of neighboring mesh cells for all sufficiently large $n$.  The piecewise mean-value inequality therefore gives
\[
|g(U_i)-g(u)|
\le C h_{U,n}\max_{x\in Q^+}\|\nabla g(x)\|_2
\]
whenever $u$ lies in a mesh cell $Q$, where $Q^+$ is a fixed enlargement of $Q$.  Summing over the Voronoi partition and grouping by mesh cells yields
\[
\|P_ng-g\|_{L^2}^2
\le C h_{U,n}^2
\sum_{Q\in\mathcal Q_{J_s}}|Q|
\|\nabla g\|_{L^\infty(Q^+)}^2.
\]
For fixed-order splines, the local polynomial inverse inequality \citep[Chapters~3 and~12]{Schumaker2007} gives $|Q|\|\nabla g\|_{L^\infty(Q^+)}^2 \le C\|\nabla g\|_{L^2(Q^{++})}^2$, with another fixed enlargement $Q^{++}$.  These enlargements have uniformly bounded overlap, and the global spline inverse inequality gives
\[
\|P_ng-g\|_{L^2}^2
\le C h_{U,n}^2\|\nabla g\|_{L^2}^2
\le C(J_sh_{U,n})^2\|g\|_{L^2}^2.
\]
Thus $\|P_ng-g\|_{L^2}=o(\|g\|_{L^2})$ uniformly over the spline space.  The triangle inequality proves $\|P_ng\|_{L^2}\asymp\|g\|_{L^2}$ and hence \eqref{eq:spatial-discrete-norm}.  Coefficient stability for normalized tensor-product B-splines \citep[Chapter~12]{Schumaker2007} gives $\|g_v\|_{L^2}^2\asymp J_s^{-d}\|v\|_2^2$, proving \eqref{eq:spatial-coefficient-norm}.

Finally, each $A_a$ is bounded by one and supported on a rectangle with volume $O(J_s^{-d})$ and side lengths of order $J_s^{-1}$.  A packing argument using the separation radius shows that the number of sites in this support is at most
\[
C\left(1+\frac{J_s^{-1}}{q_{U,n}}\right)^d
\le C m_nJ_s^{-d},
\]
where the last inequality uses $J_sh_{U,n}\to0$ and $q_{U,n}\asymp m_n^{-1/d}$.  Since $\pi_{i,n}\le C/m_n$ and $0\le A_a\le1$, both bounds in \eqref{eq:spatial-coordinate-load} follow.
\end{proof}

{
\begin{lemma}
\label[lemma]{lem:mean-hilbert-energy}
Let $Z$ be a centered random element of a separable real Hilbert space $\mathbb H$, with $E\|Z\|^2<\infty$ and covariance operator $\Sigma$. Suppose that, for every $h\in\mathbb H$,
\begin{align*}
E\exp\{\langle h,Z\rangle\}
\le
\exp\{K^2\langle h,\Sigma h\rangle/2\}.
\end{align*}
Then, for every $x>0$,
\begin{align*}
P\!\left(
\|Z\|^2>
2K^2\operatorname{tr}(\Sigma)
+
4K^2\|\Sigma\|_{\mathrm{op}}x
\right)
\le e^{-x}.
\end{align*}
\end{lemma}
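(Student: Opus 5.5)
The plan is a Hanson--Wright-type bound obtained by Gaussian decoupling, that is, by integrating the sub-Gaussian moment generating function against an auxiliary Gaussian vector. I would first reduce to finite dimensions. Let $P_k$ be the orthogonal projection onto the span of the first $k$ eigenvectors of $\Sigma$, or onto an increasing sequence of finite-dimensional subspaces exhausting $\mathbb H$. Then $\|P_kZ\|^2\uparrow\|Z\|^2$ almost surely, $\operatorname{tr}(P_k\Sigma P_k)\le\operatorname{tr}(\Sigma)$ and $\|P_k\Sigma P_k\|_{\mathrm{op}}\le\|\Sigma\|_{\mathrm{op}}$. The hypothesis, applied with $h\in P_k\mathbb H$, is inherited by $P_kZ$. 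It therefore suffices to prove the tail bound in $\R^k$ and pass to the limit by monotone convergence of the events. In $\R^k$, write $\Sigma$ for the covariance matrix of $Z$.

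Let $g\sim N(0,I_k)$ be independent of $Z$, and fix $0\le\lambda<1/(2K^2\|\Sigma\|_{\mathrm{op}})$. Conditioning on $Z$ gives
\begin{align*}
E\exp\{\lambda\|Z\|^2/2\}=E_ZE_g\exp\{\sqrt\lambda\langle g,Z\rangle\}.
\end{align*}
Swapping the order by Fubini and applying the hypothesis with $h=\sqrt\lambda g$ for fixed $g$ yields the bound $E_g\exp\{\lambda K^2 g^\top\Sigma g/2\}$. This is an explicit Gaussian integral equal to $\prod_\ell(1-\lambda K^2\mu_\ell)^{-1/2}$, where $\mu_\ell$ are the eigenvalues of $\Sigma$. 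Next I would use $-\log(1-x)\le x/(1-x)$, or more conveniently $-\tfrac12\log(1-x)\le x$ for $x\le1/2$. With the choice $\lambda=1/(2K^2\|\Sigma\|_{\mathrm{op}})$, every factor then satisfies $\lambda K^2\mu_\ell\le1/2$. This gives
\begin{align*}
E\exp\{\lambda\|Z\|^2/2\}\le\exp\{\lambda K^2\operatorname{tr}(\Sigma)\}.
\end{align*}

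By Markov's inequality,
\begin{align*}
P(\|Z\|^2>u)\le\exp\{\lambda K^2\operatorname{tr}(\Sigma)-\lambda u/2\}.
\end{align*}
With $\lambda=1/(2K^2\|\Sigma\|_{\mathrm{op}})$ and $u=2K^2\operatorname{tr}(\Sigma)+4K^2\|\Sigma\|_{\mathrm{op}}x$, the exponent equals
\begin{align*}
\frac{\operatorname{tr}\Sigma}{2\|\Sigma\|_{\mathrm{op}}}-\frac{\operatorname{tr}\Sigma}{2\|\Sigma\|_{\mathrm{op}}}-x=-x,
\end{align*}
which is exactly the claimed bound. The constants match the statement precisely, which confirms this choice of $\lambda$. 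The case $\Sigma=0$ is trivial, since then $Z=0$ almost surely.

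The main obstacle is the measure-theoretic bookkeeping rather than the inequality itself. One must justify Fubini for the nonnegative integrand, which is automatic. One must also check that the infinite-dimensional limit preserves the strict-inequality event, since $\{\|P_kZ\|^2>u\}\uparrow\{\|Z\|^2>u\}$. I would also note that the hypothesis forces $\operatorname{tr}\Sigma<\infty$, which is given.
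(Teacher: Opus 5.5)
Your proposal is correct and is essentially the paper's own argument: Gaussian decoupling via Tonelli, the determinant formula, the bound $-\log(1-u)\le 2u$ on $[0,1/2]$, and the same Markov parameter (your $\lambda/2$ is the paper's $t=(4K^2\|\Sigma\|_{\mathrm{op}})^{-1}$), all applied on finite-dimensional projections with a monotone-convergence limit. The only differences are cosmetic. The paper passes the MGF bound to the limit before applying Markov, whereas you pass the tail events to the limit. Also, your restriction on $\lambda$ should be non-strict, since you then take the endpoint; this is harmless because the Gaussian integral only needs $\lambda K^2\mu_\ell<1$.
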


\begin{proof}
The bound follows from \citet[Proposition~2.2]{MollenhauerSchillings2023} with variance proxy $K^2\Sigma$. We include a direct proof.
If $\|\Sigma\|_{\mathrm{op}}=0$, then $Z=0$ almost surely.
Otherwise, fix an orthonormal basis and let $P_r$ be projection onto
its first $r$ coordinates. Write $\Sigma_r=P_r\Sigma P_r$ on that
finite-dimensional space. For an independent $g\sim N(0,I_r)$,
Tonelli's theorem and the assumed linear moment-generating function (MGF) bound give
\begin{align*}
E\exp\{t\|P_rZ\|^2\}
&=
E_gE_Z\exp\{\sqrt{2t}\langle g,P_rZ\rangle\}\le
E_g\exp\{K^2t\langle g,\Sigma_rg\rangle\}=
\det(I_r-2K^2t\Sigma_r)^{-1/2}.
\end{align*}
For $0\le t\le(4K^2\|\Sigma\|_{\mathrm{op}})^{-1}$, every eigenvalue $u$ of $2K^2t\Sigma_r$ lies in $[0,1/2]$. Using $-\log(1-u)\le2u$, $E\exp\{t\|P_rZ\|^2\} \le \exp\{2K^2t\operatorname{tr}(\Sigma)\}$. Since $\|P_rZ\|^2\uparrow\|Z\|^2$, monotone convergence yields the
same inequality for $Z$.
Apply Markov's inequality with
$t=(4K^2\|\Sigma\|_{\mathrm{op}})^{-1}$ to obtain the result.
\end{proof}

\begin{lemma}
\label[lemma]{lem:mean-direct-max}
Suppress the array index $n$.
Assume \MainCref{ass:time,ass:tails} and the stated measurability convention. For fixed $N_i\ge1$, let $X_i(t)=X(S_i,t)$. Suppose that, for every $i,k,t,t'$, $|\operatorname{Cov}\{X_i(t),X_k(t')\}| \le C_X\kappa_{ik}$, where $(\kappa_{ik})$ is symmetric and nonnegative,
$\kappa_{ii}=1$, and $\sup_i\sum_k\kappa_{ik}\le\Theta$. Let $J\ge1$. For $q\le p$, let $B_q$ be deterministic functions satisfying
\begin{align*}
\sup_q\|B_q\|_\infty\le C_B,\qquad
\int |B_q|f_T+\int B_q^2f_T\le C_B/J.
\end{align*}
Let $w_{iq}$ be deterministic weights and let $d_{iq}$ be deterministic
uniformly bounded functions. Define
\begin{align*}
S_q=
\sum_iw_{iq}\sum_{j=1}^{N_i}B_q(T_{ij})
\{d_{iq}(T_{ij})+X_i(T_{ij})
+\sigma_i(T_{ij})\varepsilon_{ij}\}
-\sum_iw_{iq}N_i\int B_q(t)d_{iq}(t)f_T(t)\,dt.
\end{align*}
Put
\begin{align*}
M_1=M_{1,n},\quad
H_q=\sum_iw_{iq}^2N_i,\quad
F_q=\sum_iw_{iq}^2N_i^2,\quad
E_q=\max_iw_{iq}^2N_i,\quad
w_*=\max_{i,q}|w_{iq}|,
\end{align*}
and
\begin{align*}
V_q=
\frac{H_q}{J}
+
\frac{F_q-H_q+(\Theta-1)F_q}{J^2},
\qquad
\ell=\log(e+p),\qquad
L=\{\log(e+M_1+p)\}^{1/2}.
\end{align*}
For arrays with $M_1+p\to\infty$,
\begin{align*}
\max_{q\le p}|S_q|
=
O_p\!\left(
\max_q\sqrt{V_q\ell}
+
\ell\max_q\sqrt{\Theta E_q/J}
+
w_*\ell L
\right).
\end{align*}
The assertion permits fixed $p$.
\end{lemma}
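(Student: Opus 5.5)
We decompose each coordinate $S_q$ into three centered pieces and bound their maxima separately. Write $S_q=S_q^{(d)}+S_q^{(X)}+S_q^{(\veps)}$, where
\[
S_q^{(d)}=\sum_iw_{iq}\sum_{j}\Bigl\{B_q(T_{ij})d_{iq}(T_{ij})-\int B_qd_{iq}f_T\Bigr\},\quad
S_q^{(X)}=\sum_iw_{iq}\sum_jB_q(T_{ij})X_i(T_{ij}),
\]
and $S_q^{(\veps)}=\sum_iw_{iq}\sum_jB_q(T_{ij})\sigma_i(T_{ij})\veps_{ij}$. Each piece is centered by \Cref{ass:time}.

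\textbf{The pieces $S_q^{(d)}$ and $S_q^{(\veps)}$.} These are sums of independent bounded, respectively conditionally sub-Gaussian, terms over the pairs $(i,j)$. Their variances are at most $C\sum_iw_{iq}^2N_i\int B_q^2f_T\le CH_q/J$. Bernstein's inequality, applied conditionally on $\{T_{ij}\}$ for the error part and followed by a union bound over $q\le p$, gives the maximum
\[
O_p\bigl(\max_q\sqrt{H_q\ell/J}+w_*\ell L\bigr).
\]
The $\psi_2$ tails of $\veps_{ij}$ cost at most a factor $L$ in the linear term: truncate at level $L$, or use the sub-exponential Bernstein inequality.

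\textbf{The latent piece $S_q^{(X)}$.} Condition on the field $X$ and split
\[
S_q^{(X)}=\sum_iw_{iq}N_i\int B_qX_if_T+R_q,
\]
where $R_q$ is centered given $X$.

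For the first term, $G_q:=\sum_iw_{iq}N_i\int B_qX_if_T$, the variance is bounded using the covariance bound $C_X\kappa_{ik}$ and $\bigl(\int|B_q|f_T\bigr)^2\le C/J^2$. This gives $\Var(G_q)\le C\Theta F_q/J^2$. Since $G_q$ is a linear functional of $X$, the covariance-relative sub-Gaussian condition \eqref{eq:joint-subGaussian-field} (passed to integrals by a Riemann-sum limit, using joint measurability) gives $\|G_q\|_{\psi_2}\le C\sqrt{\Theta F_q}/J$. A union bound then yields $\max_q|G_q|=O_p(\max_q\sqrt{\Theta F_q\ell}/J)$.

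For $R_q$, conditional on $X$ it is a sum over sites of independent centered terms. Its conditional variance is
\[
\sum_iw_{iq}^2\Bigl[N_i\Var\bigl(B_qX_i(T)\mid X\bigr)\Bigr].
\]
This variance, together with the within-site products, produces the $H_q/J$ contribution. The diagonal $N_i(N_i-1)$ pairs are what account for $F_q-H_q$ in $V_q$.

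I would implement this cleanly by writing $V_q$ as the variance of $S_q$ computed directly:
\begin{itemize}[leftmargin=2em]
\item the same-observation terms give $H_q/J$;
\item pairs $j\ne j'$ within a site give $(F_q-H_q)/J^2$;
\item pairs across sites give $(\Theta-1)F_q/J^2$ via $\kappa_{ik}$.
\end{itemize}

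\textbf{Main obstacle.} The hard part is the tail bound for $R_q$. Its summands $w_{iq}\sum_jB_q(T_{ij})X_i(T_{ij})$ are only conditionally bounded, by $w_{iq}N_i\sup_t|X_i(t)|$, and are dependent across $i$ through $X$. I would condition on $X$ and apply Bernstein's inequality on the event where $\sup_{i,t}|X_i(t)|\lesssim L$. That event has high probability from the $\psi_2$ bounds, with a net over $t$ if needed, or I would replace the sup by a pointwise moment. The conditional variance $\sum_iw_{iq}^2N_i\int B_q^2X_i^2f_T$ has mean at most $CH_q/J$ and must be shown to concentrate. Its fluctuation is controlled by \Cref{lem:cov-sum}-type covariance sums, giving a deviation of order $\sqrt{\Theta\sum_iw_{iq}^4N_i^2}/J\le\sqrt{\Theta E_q H_q}/J$. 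Substituting this into Bernstein's inequality produces exactly the middle term $\ell\sqrt{\Theta E_q/J}$, via $\sqrt{ab}\le a+b$. The linear Bernstein term gives $w_*\ell L$, after absorbing $N_i$ into the within-site independent-pair structure by treating each observation separately.

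Collecting the three pieces with the union bound over $q\le p$ gives the claim.
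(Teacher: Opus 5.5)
\textbf{There is a genuine gap.} Your skeleton matches the paper's: Bernstein plus a union bound for the design and measurement-error parts, the split $G_q+R_q$ with $G_q=\sum_iw_{iq}N_i\int B_qX_if_T$ handled by sub-Gaussianity, and conditional Bernstein for $R_q$. The gap is in making the conditional Bernstein bound usable for the maximum over $q$.

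\textbf{The main gap: uniform control of the conditional variance.} You need the random conditional variance $Q_q=\sum_iw_{iq}^2N_i\int B_q^2X_i^2f_T$ to satisfy $Q_q\le C(H_q/J+\Theta E_qx/J)$ \emph{simultaneously for all} $q\le p$, outside an event of probability $O(pe^{-x})$. Your fluctuation bound of order $\sqrt{\Theta\sum_iw_{iq}^4N_i^2}/J$ is only a second-moment statement. Through Chebyshev and a union bound over $p\to\infty$ coordinates, it forces thresholds of order $\sqrt p$ standard deviations, not $\log p$.

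That fluctuation bound is also not available under the lemma's hypotheses:
\begin{itemize}
\item No mixing is assumed here, so \Cref{lem:cov-sum} does not apply.
\item The assumed bound on $\Cov\{X_i(t),X_k(t')\}$ says nothing about $\Cov\{X_i(t)^2,X_k(t')^2\}$. For example, independent pairs $(Z_r,\epsilon_rZ_r)$ with Rademacher $\epsilon_r$ satisfy \eqref{eq:joint-subGaussian-field} with $\kappa_{ik}=\mathbf 1(i=k)$, yet the squares within a pair are correlated.
\end{itemize}
A symptom of the problem: your heuristic produces $\sqrt{\Theta E_q\ell/J}$, whereas the statement has $\ell\sqrt{\Theta E_q/J}$. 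The extra $\sqrt\ell$ is exactly the cost of an exponential deviation term $\Theta E_qx/J$ with $x\asymp\ell$.

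\textbf{The missing idea: a Hanson--Wright-type tail bound for $Q_q$.} Write $Q_q=\|Z_q\|^2$ with $Z_q=(|w_{iq}|\sqrt{N_i}\,B_qX_i)_i\in\bigoplus_iL^2(f_T)$. Each $\langle h,Z_q\rangle$ is an integrated linear functional of the field. So, once \eqref{eq:joint-subGaussian-field} is extended to integrals, $Z_q$ is sub-Gaussian with variance proxy $K^2\Sigma_q$. Then $P(\|Z_q\|^2>2K^2\operatorname{tr}\Sigma_q+4K^2\|\Sigma_q\|_{\mathrm{op}}x)\le e^{-x}$ (\Cref{lem:mean-hilbert-energy}).
\begin{itemize}
\item The trace satisfies $\operatorname{tr}\Sigma_q\le CH_q/J$.
\item Cauchy--Schwarz gives $\int|B_qv_i|f_T\le CJ^{-1/2}\|v_i\|$; combined with the row sums of $\kappa$, this yields $\|\Sigma_q\|_{\mathrm{op}}\le C\Theta E_q/J$.
\end{itemize}
This uses only the linear covariance bound. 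Inserted into conditional Bernstein, it gives exactly $\sqrt{H_qx/J}+x\sqrt{\Theta E_q/J}$.

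\textbf{Two further steps also fail as written.}
\begin{itemize}
\item The event $\{\sup_{i,t}|X_i(t)|\lesssim L\}$ cannot be obtained. No path regularity is assumed, so there is no net over $t$, and the supremum can be infinite for a jointly measurable, pointwise sub-Gaussian field. Instead, clip pointwise at $L_0\asymp\{\log(e+M_1+p)\}^{1/2}$ and apply Bernstein to the clipped $R_q^{(L_0)}$: its summands are bounded by $Cw_*L_0$ and its conditional variance is at most $Q_q$. The clipping error is then bounded without a factor $p$, since $|B_q|\le C_B$ uniformly in $q$. Fubini, exogeneity of the $T_{ij}$ and pointwise $\psi_2$ tails give $E\max_q|R_q-R_q^{(L_0)}|\le Cw_*M_1e^{-cL_0^2}$.
\item Riemann sums need not converge for merely measurable paths. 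To pass \eqref{eq:joint-subGaussian-field} to integrals, use empirical averages over sampling points drawn from $f_T$ independently of the field. These are unbiased with $L^2(P)$ error $O(r^{-1/2})$, so you can fix good deterministic points and conclude with Fatou's lemma.
\end{itemize}
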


\begin{proof}
We first justify the use of the joint linear sub-Gaussian condition
for integrated functionals. For deterministic $c_i\in L^2(f_T)$,
consider $I=\sum_i\int c_i(t)X_i(t)f_T(t)\,dt$. Introduce auxiliary independent samples from $f_T$, independent of
the field, and approximate each integral by an empirical average
of size $r$. Conditional on the field these averages are unbiased.
The expected squared approximation error, averaged over the auxiliary
samples and the field, is at most $(C/r)\sum_i\|c_i\|_{L^2(f_T)}^2$. Consequently, for every $r$, deterministic sampling points can be
selected so that the resulting finite-point linear combinations
converge to $I$ in $L^2(P)$.
The original joint variance-relative condition implies a centered
linear MGF bound with a uniform constant. Passing along an almost
surely convergent subsequence and using Fatou's lemma, together with
convergence of variances, proves the same variance-relative MGF bound
for $I$. No sample-path continuity is used.

Decompose the latent part of $S_q$ as $G_q+R_q$, where
\begin{align*}
G_q=\sum_iw_{iq}N_i\int B_q(t)X_i(t)f_T(t)\,dt
\end{align*}
and
\begin{align*}
R_q=\sum_iw_{iq}
\left\{
\sum_{j=1}^{N_i}B_q(T_{ij})X_i(T_{ij})
-
N_i\int B_q(t)X_i(t)f_T(t)\,dt
\right\}.
\end{align*}
The covariance bound, the $L^1$ support bound, and
$2|ab|\le a^2+b^2$ imply $\operatorname{Var}(G_q) \le C\Theta F_q/J^2 \le CV_q$, where the last inequality follows from $J\ge1$.
The integrated linear MGF bound therefore gives, for $x\ge1$,
\begin{align*}
P\left(\max_q|G_q|>C\max_q\sqrt{V_qx}\right)
\le Cp e^{-cx}.
\end{align*}

Conditional on the latent information $\mathcal H_n$, all observation
times remain independent. Moreover,
\begin{align*}
\operatorname{Var}(R_q\mid\mathcal H_n)
\le
Q_q:=
\sum_iw_{iq}^2N_i
\int B_q(t)^2X_i(t)^2f_T(t)\,dt.
\end{align*}
Let $Z_q= \bigl(|w_{iq}|\sqrt{N_i}\,B_qX_i\bigr)_i$ be a random element of
$\mathbb H=\bigoplus_iL^2(f_T)$, and let $\Sigma_q$ denote its covariance
operator. Then $Q_q=\|Z_q\|^2$ and $\operatorname{tr}(\Sigma_q)\le CH_q/J$. For $v=(v_i)_i\in\mathbb H$, Cauchy--Schwarz gives $\int|B_qv_i|f_T\le CJ^{-1/2}\|v_i\|_{L^2(f_T)}$. Thus, by the covariance bound and its row-sum bound,
\begin{align*}
\langle v,\Sigma_qv\rangle \le \frac{C}{J} \sum_{i,k}\kappa_{ik} |w_{iq}|\sqrt{N_i}|w_{kq}|\sqrt{N_k} \|v_i\|\,\|v_k\| \le \frac{C\Theta E_q}{J}\sum_i\|v_i\|^2.
\end{align*}
Hence $\|\Sigma_q\|_{\mathrm{op}}\le C\Theta E_q/J$. The integrated linear MGF argument established above applies to every
$\langle v,Z_q\rangle$. Lemma~\ref{lem:mean-hilbert-energy} and a union
bound therefore yield
\begin{align*}
P\left(
\exists q:\ Q_q>C\{H_q/J+\Theta E_qx/J\}
\right)
\le p e^{-x}.
\end{align*}

Set $L_0=C_0\{\log(e+M_1+p)\}^{1/2}$ and let $X_i^{(L_0)}$ be $X_i$ clipped pointwise to $[-L_0,L_0]$.
Define $R_q^{(L_0)}$ by replacing both the sampled values and the
integral in $R_q$ by their clipped counterparts.
Conditional on $\mathcal H_n$, its centered summands are independent,
bounded by $Cw_*L_0$, and have total variance at most $Q_q$.
Conditional Bernstein's inequality \citep[Theorem~2.10]{BoucheronLugosiMassart2013}, followed by the preceding norm
bound, gives
\begin{align*}
P\left(
\max_q|R_q^{(L_0)}|
>
C\left\{
\max_q\sqrt{H_qx/J}
+x\max_q\sqrt{\Theta E_q/J}
+w_*L_0x
\right\}
\right)
\le Cp e^{-cx}.
\end{align*}

The clipping replacement is uniform over coordinates without a factor
$p$. Indeed, boundedness of every $B_q$ implies
\begin{align*}
\max_q|R_q-R_q^{(L_0)}|
\le Cw_*\Bigg[
&\sum_{i,j}|X_i(T_{ij})-X_i^{(L_0)}(T_{ij})|+\sum_iN_i\int|X_i(t)-X_i^{(L_0)}(t)|f_T(t)\,dt
\Bigg].
\end{align*}
Pointwise sub-Gaussian tails and exogeneity of the times give $E\max_q|R_q-R_q^{(L_0)}| \le Cw_*M_1e^{-cL_0^2}=o(w_*)$ when $C_0$ is sufficiently large. Markov's inequality makes this
replacement negligible. No supremum-norm tail assumption on the
sample paths, and no union bound over a continuum of times, is used.

The centered deterministic-design contribution is a sum of independent
bounded variables with variance at most $CH_q/J$.
For the measurement-error contribution, independence over $(i,j)$,
bounded $\sigma_i$, the support bound on $B_q$, and the sub-Gaussian
error moments imply the Bernstein moment bounds with variance proxy $CH_q/J$ and scale $Cw_*$. A union bound therefore controls these two contributions by $C\left\{\max_q\sqrt{H_qx/J}+w_*x\right\}$ outside an event of probability at most $Cp e^{-cx}$.

Finally, fix $\varepsilon>0$ and take
$x=C_\varepsilon\log(e+p)$ with $C_\varepsilon$ sufficiently large.
The preceding exceptional probabilities are at most $\varepsilon+o(1)$.
Since $H_q/J\le V_q$, combining all contributions proves the asserted
$O_p$ bound, including when $p$ is fixed.
\end{proof}

\begin{corollary}
\label[corollary]{cor:mean-direct-absorption}
Under \MainCref{ass:design,ass:beta,ass:time,ass:tails}, apply \Cref{lem:mean-direct-max} with $p=p_n$, $M_1=M_{1,n}$, and $\kappa_{ik}=\beta_X(\|S_i-S_k\|_2)^{1/2}$ for $i\ne k$, $\kappa_{ii}=1$. Let $D_n=\delta_n^{-d},\quad N_*=\max_iN_i,\quad \Lambda_n=w_*N_*$. Use $b_n$ from \MainCref{sec:theory-stationary} with this value of $p_n$. Then \Cref{lem:mean-direct-max} gives
\begin{align*}
\max_{q\le p_n}|S_q|
=
O_p\!\left(
\max_q\sqrt{V_q\log(e+p_n)}
+
D_n\Lambda_nb_n^d\log(e+p_n)
\right).
\end{align*}
\end{corollary}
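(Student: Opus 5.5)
The plan is to verify that \MainCref{ass:design,ass:beta,ass:time,ass:tails} deliver the hypotheses of \Cref{lem:mean-direct-max} with $\Theta=C\Theta_n$. Once that is done, the task reduces to showing that the two remainder terms of that lemma,
\[
\ell\max_q\sqrt{\Theta E_q/J}
\qquad\text{and}\qquad
w_*\ell L,
\]
are each dominated by $D_n\Lambda_nb_n^d\log(e+p_n)$. The leading term $\max_q\sqrt{V_q\log(e+p_n)}$ is carried over unchanged.

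\emph{Hypotheses.} Fix $i\ne k$ and $t,t'$. The variables $X_i(t)$ and $X_k(t')$ are measurable with respect to $\cF_X(\{S_i\})$ and $\cF_X(\{S_k\})$, respectively. Their $\psi_2$ norms are uniformly bounded by \MainCref{ass:tails}, so their fourth moments are uniformly bounded. Rio's inequality, used exactly as in \Cref{lem:cov-sum}, then gives
\[
|\Cov\{X_i(t),X_k(t')\}|\le C_X\,\beta_X(\|S_i-S_k\|_2)^{1/2}=C_X\kappa_{ik}.
\]
On the diagonal, $|\Cov\{X_i(t),X_i(t')\}|\le K_0^2$, so the same bound holds with $\kappa_{ii}=1$ after enlarging $C_X$. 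By \eqref{eq:Theta}, the row sums satisfy $\sup_i\sum_k\kappa_{ik}\le\Theta_n$. \Cref{lem:Theta-order} further gives $\Theta_n\le CD_n$. Since $\delta_n=O(1)$, we also have $D_n\ge c>0$.

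\emph{First remainder term.} We have $E_q=\max_iw_{iq}^2N_i\le w_*^2N_*$. Using $J\ge1$, $N_*\ge1$ and $\Theta_n\le CD_n$,
\[
\sqrt{\Theta_nE_q/J}\le C D_n^{1/2}w_*N_*^{1/2}\le C D_n\Lambda_n.
\]
Since $b_n\ge1$, multiplying by $\ell=\log(e+p_n)$ gives a quantity bounded by $CD_n\Lambda_nb_n^d\log(e+p_n)$.

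\emph{Second remainder term.} It suffices to show
\[
L=\{\log(e+M_{1,n}+p_n)\}^{1/2}\le C D_nN_*b_n^d .
\]
The approach is to split the logarithm multiplicatively. \MainCref{ass:design} gives $m_n\asymp V_nD_n$, and \Cref{lem:Theta-order} gives $V_n\le Cm_{\mathrm{eff},n}$. Hence
\[
M_{1,n}\le m_nN_*\le C\,m_{\mathrm{eff},n}D_nN_*.
\]
It follows that $e+M_{1,n}+p_n\le C(e+p_n+m_{\mathrm{eff},n})\,(e+D_n)\,(e+N_*)$. Applying $\sqrt{a+b+c}\le\sqrt a+\sqrt b+\sqrt c$ to the logarithm of this product leaves three pieces to control:
\begin{itemize}
\item $\{\log(e+p_n+m_{\mathrm{eff},n})\}^{1/2}\le C(e+p_n+m_{\mathrm{eff},n})^{2d/\alpha_\beta}=Cb_n^d$, because every positive power of $x$ eventually dominates $\sqrt{\log x}$;
\item $\{\log(e+D_n)\}^{1/2}\le CD_n$, using $D_n\ge c$;
\item $\{\log(e+N_*)\}^{1/2}\le CN_*$, using $N_*\ge1$.
\end{itemize}
Each of $D_n$, $N_*$ and $b_n^d$ is bounded below by a positive constant, so the sum of the three pieces is bounded by $CD_nN_*b_n^d$. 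Multiplying by $w_*\ell$ gives $w_*\ell L\le CD_n\Lambda_nb_n^d\log(e+p_n)$. Combining this with the first remainder term and the leading term yields the asserted bound.

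The main obstacle is this second term, and specifically the need to absorb $\log M_{1,n}$ even though $M_{1,n}$ can be much larger than $m_{\mathrm{eff},n}$. The factorization $M_{1,n}\lesssim m_{\mathrm{eff},n}D_nN_*$ is what makes this possible: the excess logarithmic growth is charged to the polynomial factors $D_n$ and $N_*$ that are already present in $\Lambda_nD_n$. The remaining ingredients, namely the covariance verification and the $E_q$ bound, are routine.
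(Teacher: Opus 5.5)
Your proposal is correct and follows the paper's proof essentially step for step: Rio's inequality for the covariance hypothesis, $\Theta_n\le CD_n$ with $D_n\ge c$, the bound $E_q\le w_*^2N_*$ for the first remainder, and the factorization $M_{1,n}\le m_nN_*\le CD_nN_*m_{\mathrm{eff},n}$ to absorb $\log(e+M_{1,n}+p_n)$ into $b_n^d$ and the polynomial factors $D_nN_*$. The only difference is cosmetic: you take the square root piece by piece, whereas the paper bounds the whole logarithm by $CD_nN_*b_n^d$ and then takes the square root using $D_nN_*b_n^d\ge c$.
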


\begin{proof}
Rio's covariance inequality \citep[Theorem~1.1]{Rio1993} and \MainCref{ass:tails} give the required covariance bound. \Cref{lem:shell} and $\alpha_\beta>2d$ give $\Theta_n\le CD_n$. In the stated spatial regimes, $D_n\ge c>0$. Since $E_q\le w_*^2N_*$ and $J\ge1$, $\max_q\sqrt{\Theta_nE_q/J} \le Cw_*\sqrt{D_nN_*/J} \le CD_n\Lambda_n$. Also, $M_{1,n}\le m_nN_*=\Theta_nm_{\mathrm{eff},n}N_* \le CD_nN_*m_{\mathrm{eff},n}$. Write $R_n=e+p_n+m_{\mathrm{eff},n}$ and $\epsilon_0=2d/\alpha_\beta>0$, so $b_n^d=R_n^{\epsilon_0}$. Using $\log R\le C_{\epsilon_0}R^{\epsilon_0}$,
\begin{align*}
\log(e+M_{1,n}+p_n)
\le C\{\log R_n+\log(1+D_nN_*)\}
\le CD_nN_*b_n^d.
\end{align*}
Because $D_nN_*\ge c$ and $b_n\ge1$, it follows that $w_*\{\log(e+M_{1,n}+p_n)\}^{1/2} \le CD_n\Lambda_nb_n^d$. Substitution in Lemma~\ref{lem:mean-direct-max} proves the result.

\end{proof}

}

\begin{lemma}\label[lemma]{lem:mean-proj-bias}
Under \MainCref{ass:time,ass:mean-smooth},
\begin{equation*}
\sup_{t\in\left[0,1\right]}|m\left(t\right)-m_{J_m}\left(t\right)|\le C J_m^{-q_m^\ast}.
\end{equation*}
\end{lemma}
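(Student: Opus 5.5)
The plan is to show that the weighted $L^2(f_T)$ projection onto the spline space behaves like a quasi-interpolant in sup-norm. Two standard ingredients will be combined: a best-approximation bound in $\|\cdot\|_\infty$ for Hölder functions, and uniform boundedness of the $L^2(f_T)$ projector as an operator on $L^\infty[0,1]$. Together these give the Lebesgue-type inequality
\[
\sup_t|m(t)-m_{J_m}(t)|\le (1+\|P_{J_m}\|_{\infty\to\infty})\inf_{g\in\mathcal S_{J_m}}\|m-g\|_\infty ,
\]
where $\mathcal S_{J_m}$ is the spline space and $P_{J_m}f:=\bB_m^\top Q_{m,J_m}^{-1}\E\{\bB_m(T)f(T)\}$. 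The inequality follows because $P_{J_m}$ reproduces splines, so $m-P_{J_m}m=(I-P_{J_m})(m-g)$ for every $g\in\mathcal S_{J_m}$.

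First, by \MainCref{ass:mean-smooth}, $m\in C^{q_m,\nu_m}[0,1]$ and $p\ge q_m^\ast+1$. The classical Jackson-type estimate for splines of order $p$ on quasi-uniform knots \citep[Chapter~XII]{deBoor2001} then gives a spline $g^\ast$ with $\|m-g^\ast\|_\infty\le C\|m\|_{C^{q_m,\nu_m}}J_m^{-q_m^\ast}$. One can take a de Boor–Fix quasi-interpolant applied to the local Taylor polynomial of degree $q_m$, so that the remainder is controlled by the $\nu_m$-Hölder modulus of $m^{(q_m)}$ over cells of length $\asymp J_m^{-1}$.

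Second, I bound $\|P_{J_m}\|_{\infty\to\infty}$ uniformly in $J_m$. For $\|f\|_\infty\le1$, each entry of $\E\{\bB_m(T)f(T)\}$ is at most $\E B_{a,J}(T)\le CJ_m^{-1}$, since each basis function has support of length $O(J_m^{-1})$ and $f_T\le C_f$. By \Cref{lem:gram-pop}, $\oinf{Q_{m,J_m}^{-1}}\le CJ_m$, hence $\|Q_{m,J_m}^{-1}\E\{\bB_m f\}\|_\infty\le C$. By \Cref{lem:Bspline-basic}(i), $|P_{J_m}f(t)|\le\|\bB_m(t)\|_1\cdot C=C$. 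This step is the crux: a crude eigenvalue bound would lose a factor $J_m^{1/2}$, and the $\ell_\infty$ bound on the inverse Gram matrix, obtained via Demko–Moss–Smith decay for banded matrices, is what rescues the sup-norm. Since it is already recorded in \Cref{lem:gram-pop}, it can simply be invoked.

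Combining the two steps with the reproduction identity, applied with $g=g^\ast$, yields $\sup_t|m(t)-m_{J_m}(t)|\le (1+C)CJ_m^{-q_m^\ast}$. The constants depend only on $p$, $c_f$, $C_f$, the knot quasi-uniformity and $\|m\|_{C^{q_m,\nu_m}}$. The orthogonality relation \eqref{eq:mean-orthogonality} is exactly the statement that $m_{J_m}=P_{J_m}m$, so no further identification is needed.
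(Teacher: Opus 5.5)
Your proposal is correct and matches the paper's proof. The paper also takes a Jackson-type spline approximant $\tilde m_{J_m}$ and writes $\bbeta_{m,J_m}-\tilde\bbeta_{m,J_m}=Q_{m,J_m}^{-1}\E[\bB_m(T)\{m(T)-\tilde m_{J_m}(T)\}]$, which is your reproduction identity. It then bounds this using $\|\bB_m(t)\|_1\le 1$, $\oinf{Q_{m,J_m}^{-1}}\le CJ_m$ and $\E B_{a,m}(T)\le CJ_m^{-1}$, so the only difference is that you package these same estimates as the uniform bound $\|P_{J_m}\|_{\infty\to\infty}\le C$ rather than applying them directly to $m-\tilde m_{J_m}$.
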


\begin{proof}
The spline approximation theorem for quasi-uniform meshes \citep[Chapter~XII]{deBoor2001} gives a vector $\tilde \bbeta_{m,J_m}\in\R^{K_m}$ such that $\tilde m_{J_m}\left(t\right):=\bB_m\left(t\right)^\top \tilde\bbeta_{m,J_m}$ satisfies
\begin{equation}\label{eq:mean-approx0}
\sup_{t\in\left[0,1\right]}|m\left(t\right)-\tilde m_{J_m}\left(t\right)|\le C J_m^{-q_m^\ast}.
\end{equation}
Since $\bbeta_{m,J_m}=Q_{m,J_m}^{-1}r_{m,J_m}$,
\begin{equation*}
\bbeta_{m,J_m}-\tilde\bbeta_{m,J_m}
=
Q_{m,J_m}^{-1}
\E\left[\bB_m\left(T\right)\left\{m\left(T\right)-\tilde m_{J_m}\left(T\right)\right\}\right].
\end{equation*}
Thus, for each $t$,
\begin{align*}
|m_{J_m}\left(t\right)-\tilde m_{J_m}\left(t\right)|
&=
\left|\bB_m\left(t\right)^\top Q_{m,J_m}^{-1}\E\left[\bB_m\left(T\right)\left\{m\left(T\right)-\tilde m_{J_m}\left(T\right)\right\}\right]\right|
\\
&\le
\norm{\bB_m\left(t\right)}_1\,\oinf{Q_{m,J_m}^{-1}}\,
\left\|\E\left[\bB_m\left(T\right)\left\{m\left(T\right)-\tilde m_{J_m}\left(T\right)\right\}\right]\right\|_\infty.
\end{align*}
By \Cref{lem:Bspline-basic,lem:gram-pop}, $\norm{\bB_m\left(t\right)}_1\le1$ and $\oinf{Q_{m,J_m}^{-1}}\le C J_m$.
For each coordinate $a$,
\begin{equation*}
\left|\E\left[B_{a,m}\left(T\right)\left\{m\left(T\right)-\tilde m_{J_m}\left(T\right)\right\}\right]\right|
\le \sup_u|m\left(u\right)-\tilde m_{J_m}\left(u\right)|\,\E B_{a,m}\left(T\right).
\end{equation*}
Because $B_{a,m}$ is supported on an interval of length $O\left(J_m^{-1}\right)$ and $f_T$ is bounded, $\E B_{a,m}\left(T\right)\le C J_m^{-1}$.
Therefore,
\begin{equation*}
\sup_{t\in\left[0,1\right]} |m_{J_m}\left(t\right)-\tilde m_{J_m}\left(t\right)|
\le C J_m\cdot J_m^{-1}\cdot J_m^{-q_m^\ast}
\le C J_m^{-q_m^\ast}.
\end{equation*}
Combining this bound with \eqref{eq:mean-approx0} proves the lemma.

\end{proof}

\begin{lemma}\label[lemma]{lem:mean-gram}
Under \MainCref{ass:time},
\begin{equation*}
\mnorm{\hat Q_{m,J_m}-Q_{m,J_m}}
=
O_p\!\left(
\sqrt{\frac{\log K_m}{M_{1,n}J_m}}+\frac{\log K_m}{M_{1,n}}
\right).
\end{equation*}
Consequently, if
\begin{equation}\label{eq:mean-gram-cond}
J_m\sqrt{\frac{\log K_m}{M_{1,n}J_m}}+J_m\frac{\log K_m}{M_{1,n}}\to0,
\end{equation}
then $\oinf{\hat Q_{m,J_m}^{-1}}=O_p\left(J_m\right)$, and
\begin{equation*}
\oinf{\hat Q_{m,J_m}^{-1}-Q_{m,J_m}^{-1}}
=
O_p\!\left(
J_m^2\sqrt{\frac{\log K_m}{M_{1,n}J_m}}+J_m^2\frac{\log K_m}{M_{1,n}}
\right).
\end{equation*}
\end{lemma}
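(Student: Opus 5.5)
The plan is to treat $\hat Q_{m,J_m}-Q_{m,J_m}$ entrywise as a sum of independent bounded matrices, use Bernstein's inequality and a union bound over the nonzero entries, and then transfer the max-norm bound to $\oinf{\cdot}$ via bandedness and a Neumann series. By \Cref{ass:time}, the times $\{T_{ij}\}$ are i.i.d. with density $f_T$ and do not involve the latent field. Hence
\[
\hat Q_{m,J_m}-Q_{m,J_m}=M_{1,n}^{-1}\sum_{i,j}\bigl\{\bB_m(T_{ij})\bB_m(T_{ij})^\top-Q_{m,J_m}\bigr\}
\]
is an average of $M_{1,n}$ i.i.d. centered matrices, and no spatial dependence enters.

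For a fixed entry $(a,b)$, the summand $B_{a,m}(T)B_{b,m}(T)$ is bounded by $1$ by \Cref{lem:Bspline-basic}(i). It vanishes unless $|a-b|<p$, by the local support in (ii). Its variance is at most $\E B_{a,m}(T)^2\le CJ_m^{-1}$, since the support has length $O(J_m^{-1})$ and $f_T\le C_f$. Bernstein's inequality then gives, for every $x>0$,
\[
\PP\Bigl(|(\hat Q-Q)_{ab}|>C\bigl\{\sqrt{x/(M_{1,n}J_m)}+x/M_{1,n}\bigr\}\Bigr)\le 2e^{-x}.
\]
There are only $O(K_m)$ nonzero entries. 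A union bound with $x=C'\log K_m$ therefore yields the stated max-norm rate; the structurally zero entries contribute nothing.

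For the inverse, bandedness (band width $<p$) gives
\[
\oinf{\hat Q-Q}\le (2p-1)\mnorm{\hat Q-Q}=:\Delta_n.
\]
By \Cref{lem:gram-pop}, $\oinf{Q^{-1}}\le CJ_m$, so under \eqref{eq:mean-gram-cond} we get $\oinf{Q^{-1}(\hat Q-Q)}\le CJ_m\Delta_n=o_p(1)$. On the event where this is at most $1/2$, write $\hat Q=Q\{I+Q^{-1}(\hat Q-Q)\}$ and expand in a Neumann series in the submultiplicative norm $\oinf{\cdot}$. This gives
\[
\oinf{\hat Q^{-1}}\le 2\oinf{Q^{-1}}=O_p(J_m).
\]
The resolvent identity $\hat Q^{-1}-Q^{-1}=-\hat Q^{-1}(\hat Q-Q)Q^{-1}$ then gives
\[
\oinf{\hat Q^{-1}-Q^{-1}}\le \oinf{\hat Q^{-1}}\,\Delta_n\,\oinf{Q^{-1}}=O_p(J_m^2\Delta_n),
\]
which is the claimed rate. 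Invertibility of $\hat Q$ itself holds on the same event.

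The main point needing care is the passage from the max norm to the operator $\infty$-norm. This relies on $\hat Q$ sharing the band structure of $Q$, which holds deterministically because each summand $\bB_m(T)\bB_m(T)^\top$ is banded. It also relies on the $\oinf{Q^{-1}}\le CJ_m$ bound from \Cref{lem:gram-pop}. Without the banded structure one would lose a factor $K_m$.
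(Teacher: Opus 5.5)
Your proposal is correct and follows essentially the same route as the paper. Both arguments combine four steps:
\begin{itemize}
\item entrywise Bernstein bounds for independent bounded summands with variance $O(J_m^{-1})$;
\item a union bound over the $O(K_m)$ structurally nonzero band entries;
\item the fixed bandwidth, to pass from the max norm to the $\ell_\infty$ operator norm;
\item the Neumann-series and resolvent identities, combined with $\oinf{Q_{m,J_m}^{-1}}\le CJ_m$ from \Cref{lem:gram-pop}.
\end{itemize}
The differences are only cosmetic: you factor $\hat Q_{m,J_m}$ on the left rather than the right, and you choose the Bernstein level $x=C'\log K_m$ directly.
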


\begin{proof}
Fix a coordinate pair $\left(a,b\right)$.
\begin{equation*}
\left(\hat Q_{m,J_m}-Q_{m,J_m}\right)_{ab}
=
\frac1{M_{1,n}}\sum_{i=1}^{m_n}\sum_{j=1}^{N_i}
\left[B_{a,m}\left(T_{ij}\right)B_{b,m}\left(T_{ij}\right)-\E\left\{B_{a,m}\left(T\right)B_{b,m}\left(T\right)\right\}\right].
\end{equation*}
The summands are independent over all $\left(i,j\right)$ because the time points are i.i.d., and each is bounded in absolute value by $1$.
If $|a-b|>p-1$, local support makes the product identically zero.
Otherwise,
\begin{equation*}
\Var\left\{B_{a,m}\left(T\right)B_{b,m}\left(T\right)\right\}
\le \E\left\{B_{a,m}\left(T\right)^2B_{b,m}\left(T\right)^2\right\}
\le C J_m^{-1},
\end{equation*}
because the overlap support has length $O\left(J_m^{-1}\right)$.
Thus, Bernstein's inequality \citep[Corollary~2.11]{BoucheronLugosiMassart2013} gives, for every $x>0$,
\begin{equation*}
\PP\left(|\left(\hat Q_{m,J_m}-Q_{m,J_m}\right)_{ab}|>x\right)
\le 2\exp\left\{-\frac{C M_{1,n}x^2}{J_m^{-1}+x}\right\}.
\end{equation*}
There are only $O\left(K_m\right)$ potentially nonzero entries, so take
\begin{equation*}
x=A\left(\sqrt{\frac{\log K_m}{M_{1,n}J_m}}+\frac{\log K_m}{M_{1,n}}\right)
\end{equation*}
and apply a union bound to obtain
\begin{equation*}
\PP\left(\mnorm{\hat Q_{m,J_m}-Q_{m,J_m}}>x\right)
\le {C K_m e^{-c\min(A^2,A)\log K_m}}
\end{equation*}
{
For each $\varepsilon>0$, the last bound is at most $\varepsilon$ for sufficiently large $A$, uniformly for $K_m\ge2$. This proves the max-norm bound.
}
Because the Gram matrix has fixed bandwidth, its $\ell_\infty$ operator norm is bounded by a constant multiple of the max norm; hence $\oinf{\hat Q_{m,J_m}-Q_{m,J_m}}=o_p\left(J_m^{-1}\right)$ under \eqref{eq:mean-gram-cond}.
The Neumann-series inversion identity then gives
\begin{equation*}
\hat Q_{m,J_m}^{-1}
=
Q_{m,J_m}^{-1}\left\{I+\left(\hat Q_{m,J_m}-Q_{m,J_m}\right)Q_{m,J_m}^{-1}\right\}^{-1},
\end{equation*}
which implies $\oinf{\hat Q_{m,J_m}^{-1}}=O_p\left(J_m\right)$.
Finally,
\begin{equation*}
\hat Q_{m,J_m}^{-1}-Q_{m,J_m}^{-1}
=
Q_{m,J_m}^{-1}\left(Q_{m,J_m}-\hat Q_{m,J_m}\right)\hat Q_{m,J_m}^{-1},
\end{equation*}
and the preceding bounds yield the last display.

\end{proof}

\begin{lemma}\label[lemma]{lem:mean-centered}
Under \MainCref{ass:time}, $\E\left(\xi_{i,J_m}\right)=0$ for $1\le i\le m_n$.
\end{lemma}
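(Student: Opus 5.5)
The plan is to expand the score $\xi_{i,J_m}$ using the model \eqref{eq:model} and show that each resulting summand has mean zero. With $Y_{ij}=m(T_{ij})+X(S_i,T_{ij})+\sigma(S_i,T_{ij})\veps_{ij}$ we write
\begin{equation*}
\xi_{i,J_m}=\sum_{j=1}^{N_i}\bB_m(T_{ij})\{m(T_{ij})-m_{J_m}(T_{ij})\}
+\sum_{j=1}^{N_i}\bB_m(T_{ij})X(S_i,T_{ij})
+\sum_{j=1}^{N_i}\bB_m(T_{ij})\sigma(S_i,T_{ij})\veps_{ij}.
\end{equation*}
Because $N_i$ is deterministic, linearity reduces the claim to showing that each of the three sums has expectation zero. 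Integrability is not an issue: the basis functions are bounded by one (\Cref{lem:Bspline-basic}), $m-m_{J_m}$ is bounded, and $X$, $\veps$ and $\sigma$ have finite second moments by \MainCref{ass:tails}.

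\textbf{Bias term.} Each $T_{ij}$ has density $f_T$ by \MainCref{ass:time}. Hence $\E[\bB_m(T_{ij})\{m(T_{ij})-m_{J_m}(T_{ij})\}]$ is exactly the left-hand side of the orthogonality relation \eqref{eq:mean-orthogonality}, and it therefore vanishes.

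\textbf{Latent-field term.} The observation times are independent of $X$, and the map $(\omega,t)\mapsto X(S_i,t;\omega)$ is jointly measurable by the stated convention. We condition on $T_{ij}=t$ and apply Fubini--Tonelli, which gives
\begin{equation*}
\E\{\bB_m(T_{ij})X(S_i,T_{ij})\}=\int_0^1\bB_m(t)\,\E\{X(S_i,t)\}f_T(t)\,dt=0,
\end{equation*}
since $X$ is centered. Using Fubini here requires $\int\E|X(S_i,t)|f_T(t)\,dt<\infty$, and this follows from the uniform $\psi_2$ bound. Joint measurability combined with this integrability is the only delicate point in the whole argument.

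\textbf{Error term.} The error $\veps_{ij}$ is centered and independent of $T_{ij}$. The factor $\sigma(S_i,\cdot)$ is deterministic and bounded. It follows that $\E\{\bB_m(T_{ij})\sigma(S_i,T_{ij})\veps_{ij}\}=\E\{\bB_m(T_{ij})\sigma(S_i,T_{ij})\}\E\veps_{ij}=0$.

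Adding the three terms gives $\E\xi_{i,J_m}=0$.
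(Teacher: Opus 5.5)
Your proof is correct and is essentially the paper's argument: the paper compresses your three-way split into the single identity $\E(Y_{ij}\mid T_{ij})=m(T_{ij})$, obtained from exogeneity of the times, centering of $X$, and centered independent errors, and then applies the orthogonality relation \eqref{eq:mean-orthogonality} to get $\E\xi_{i,J_m}=N_i\E[\bB_m(T)\{m(T)-m_{J_m}(T)\}]=0$. You also invoke \Cref{ass:tails} and the measurability convention to justify Fubini; this detail is harmless (indeed necessary for integrability), and the paper leaves it implicit.
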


\begin{proof}
The exogeneity in \MainCref{ass:time}, the pointwise centering of $X$, and the centering and independence of the measurement errors imply $\E\left(Y_{ij}\mid T_{ij}\right)=m\left(T_{ij}\right)$.  Consequently, $\E\left(\xi_{i,J_m}\right)=N_i\E\left[\bB_m\left(T\right)\left\{m\left(T\right)-m_{J_m}\left(T\right)\right\}\right]=0$ by \MainEqref{eq:mean-orthogonality}.

\end{proof}

\begin{lemma}\label[lemma]{lem:mean-score-var}
{Under \MainCref{ass:design,ass:beta,ass:time,ass:tails,ass:mean-smooth}}, for every $1\le a\le K_m$,
\[
\Var(\xi_{i,J_m,a})
\le C\left\{\frac{N_i}{J_m}+\frac{N_i(N_i-1)}{J_m^2}\right\}.
\]
Define
\begin{equation}\label{eq:mean-score-variance-proxy}
v_{m,n}:=
\frac{1}{M_{1,n}J_m}
+
\frac{M_{2,n}+(\Theta_n-1)(M_{1,n}+M_{2,n})}{M_{1,n}^2J_m^2}.
\end{equation}
Then
\[
\Var\left(\frac1{M_{1,n}}\sum_{i=1}^{m_n}\xi_{i,J_m,a}\right)\le Cv_{m,n}.
\]
\end{lemma}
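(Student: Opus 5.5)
The plan is a direct second-moment computation. I split each score coordinate into a deterministic-bias part, a latent-field part and a measurement-error part. Within a site, only the latent part produces cross terms between different time points. Across sites, only the latent part produces nonzero covariances, and these are controlled by Rio's inequality, which is where $\Theta_n$ enters.

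\emph{Decomposition and single-site bound.} Write $d_J(t):=m(t)-m_{J_m}(t)$ and $X_i(t):=X(S_i,t)$, so that
\[
\xi_{i,J_m,a}=\sum_{j=1}^{N_i}B_{a,m}(T_{ij})\{d_J(T_{ij})+X_i(T_{ij})+\sigma(S_i,T_{ij})\veps_{ij}\}.
\]
By \Cref{lem:mean-proj-bias} and boundedness of $m$ (\MainCref{ass:mean-smooth}), $d_J$ is uniformly bounded. By \MainCref{ass:tails}, $\sup_{s,t}\E X(s,t)^2\le C$, $\sigma$ is bounded, and $\veps_{ij}$ has unit variance.

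For the diagonal terms $j=j'$, I condition on $T_{ij}$ and use independence of the times from $X$ and $\veps$ (\MainCref{ass:time}). This gives $\E[B_{a,m}(T)^2\{\cdots\}^2]\le C\,\E B_{a,m}(T)^2\le C/J_m$, since $B_{a,m}$ is bounded by one and has support of length $O(J_m^{-1})$.

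For the off-diagonal terms $j\ne j'$ the roles of the three parts differ:
\begin{itemize}
\item the bias and error parts are independent across $j$ (i.i.d.\ times, independent centered errors), so they contribute no covariance;
\item the cross terms between the latent part and the other two parts vanish as well;
\item the latent part contributes $\E[B_{a,m}(T)B_{a,m}(T')\,\E\{X_i(T)X_i(T')\mid T,T'\}]$ with $T,T'$ independent, which is at most $C(\E B_{a,m}(T))^2\le C/J_m^2$.
\end{itemize}
Summing the $N_i$ diagonal and $N_i(N_i-1)$ off-diagonal terms gives $\Var(\xi_{i,J_m,a})\le C\{N_i/J_m+N_i(N_i-1)/J_m^2\}$.

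\emph{Cross-site covariances.} For $i\ne k$, the time points and errors at the two sites are mutually independent and independent of $X$. The bias and error parts are therefore independent of everything at site $k$, and
\[
\Cov(\xi_{i,J_m,a},\xi_{k,J_m,a})=N_iN_k\,\E\bigl[B_{a,m}(T)B_{a,m}(T')\Cov\{X_i(T),X_k(T')\}\bigr],
\]
with $T,T'$ i.i.d.\ from $f_T$ and independent of $X$. For each fixed pair $(t,t')$, Rio's covariance inequality with the fourth moments guaranteed by the $\psi_2$ bound (as in \Cref{lem:cov-sum}) gives
\[
|\Cov\{X_i(t),X_k(t')\}|\le C\kappa_{ik},\qquad \kappa_{ik}:=\beta_X(\|S_i-S_k\|_2)^{1/2}.
\]
Hence $|\Cov(\xi_{i,J_m,a},\xi_{k,J_m,a})|\le CN_iN_k\kappa_{ik}/J_m^2$.

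\emph{Aggregate bound.} I use $2N_iN_k\le N_i^2+N_k^2$, the symmetry of $\kappa$, and the row-sum bound $\sup_i\sum_{k\ne i}\kappa_{ik}\le\Theta_n-1$ from \MainEqref{eq:Theta}. This gives
\[
\sum_{i\ne k}N_iN_k\kappa_{ik}\le(\Theta_n-1)\sum_iN_i^2=(\Theta_n-1)(M_{1,n}+M_{2,n}).
\]
Adding the single-site variances, whose sum is $C(M_{1,n}/J_m+M_{2,n}/J_m^2)$, and dividing by $M_{1,n}^2$ yields $Cv_{m,n}$ as defined in \eqref{eq:mean-score-variance-proxy}.

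The main obstacle is applying Rio's inequality to the field evaluated at random times. I would handle this by first conditioning on the (independent) times and applying the pointwise beta-mixing bound for fixed $(t,t')$, which is legitimate by the joint measurability convention. I would then integrate against $f_T\otimes f_T$ using Fubini, so no sample-path regularity of $X$ is needed.
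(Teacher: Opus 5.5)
Your proposal is correct and follows essentially the same route as the paper: same-site diagonal and off-diagonal bounds of orders $J_m^{-1}$ and $J_m^{-2}$, Rio's covariance inequality applied to the latent-field cross-site products to get $C N_iN_kJ_m^{-2}\beta_X(\|S_i-S_k\|_2)^{1/2}$, and the symmetrization $2N_iN_k\le N_i^2+N_k^2$ to reach $(\Theta_n-1)(M_{1,n}+M_{2,n})$. Conditioning on the times and using Fubini through the joint measurability convention only makes explicit what the paper leaves implicit, and your term-by-term vanishing of the within-site bias and error cross terms yields the same order as the paper's direct bound on $|\E\{B_a(T_{ij})B_a(T_{i\ell})R_{ij}R_{i\ell}\}|$.
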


\begin{proof}
Fix $a$, write $B_a(t)=B_{a,m}(t)$, and set $R_{ij}=Y_{ij}-m_{J_m}(T_{ij})$.  By \Cref{lem:mean-centered}, the score is centered.  Exogeneity and the uniform moment bounds give
$\E\{B_a(T_{ij})^2R_{ij}^2\}\le CJ_m^{-1}$.  For $j\ne\ell$, independence of the two observation times and bounded conditional second moments give
\[
\left|\E\{B_a(T_{ij})B_a(T_{i\ell})R_{ij}R_{i\ell}\}\right|
\le C\{\E B_a(T)\}^2\le CJ_m^{-2}.
\]
This proves the same-site variance bound.

For $i\ne k$, projection orthogonality and independence of the observation times and errors from the latent process remove all cross-site terms except the latent-field product.  Rio's covariance inequality \citep[Theorem~1.1]{Rio1993} and \MainCref{ass:tails} yield
\[
|\Cov(\xi_{i,J_m,a},\xi_{k,J_m,a})|
\le C N_iN_kJ_m^{-2}\beta_X(\|S_i-S_k\|_2)^{1/2}.
\]
Using $2N_iN_k\le N_i^2+N_k^2$,
\[
\sum_{i\ne k}N_iN_k\beta_X(\|S_i-S_k\|_2)^{1/2}
\le(\Theta_n-1)\sum_iN_i^2
=(\Theta_n-1)(M_{1,n}+M_{2,n}).
\]
Adding the diagonal and off-diagonal same-site terms gives
\[
\frac{C}{M_{1,n}^2}
\left\{
\frac{M_{1,n}}{J_m}
+
\frac{M_{2,n}+(\Theta_n-1)(M_{1,n}+M_{2,n})}{J_m^2}
\right\}=Cv_{m,n},
\]
{
for the full normalized variance.
}
\end{proof}

\begin{lemma}\label[lemma]{lem:mean-score-max}
{Under \MainCref{ass:design,ass:beta,ass:time,ass:tails,ass:mean-smooth,ass:mean-leverage}}, {
take $p_n=K_m$ in the definition of $b_n$
}.  Then
\begin{equation}\label{eq:mean-score-max}
\left\|\frac1{M_{1,n}}\sum_{i=1}^{m_n}\xi_{i,J_m}\right\|_\infty
=
O_p\left(
\sqrt{v_{m,n}\log K_m}
+
\frac{\delta_n^{-d}b_n^d\log K_m}{m_n}
\right),
\end{equation}
where $v_{m,n}$ is defined in \eqref{eq:mean-score-variance-proxy}.
If $\delta_n^{-d}b_n^dJ_m\left\{\log K_m/\left[m_n\left(\Theta_n+J_m/\bar N_n\right)\right]\right\}^{1/2}\to0$, then \MainCref{ass:mean-leverage} further gives
\begin{equation}\label{eq:mean-score-max-reduced}
J_m\left\|\frac1{M_{1,n}}\sum_{i=1}^{m_n}\xi_{i,J_m}\right\|_\infty
=O_p\left[
\sqrt{\log K_m\left\{
\frac{J_m}{m_n\bar N_n}+\frac{1}{m_{\mathrm{eff},n}}
\right\}}
\right].
\end{equation}
\end{lemma}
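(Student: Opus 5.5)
The plan is to read \eqref{eq:mean-score-max} off \Cref{cor:mean-direct-absorption}, applied coordinatewise to the score vector. For coordinate $q=a\in\{1,\dots,K_m\}$ I take $B_q=B_{a,m}$, weights $w_{iq}=M_{1,n}^{-1}$ for all $i$, and deterministic part $d_{iq}(t)=m(t)-m_{J_m}(t)$. Then $Y_{ij}-m_{J_m}(T_{ij})=d_{iq}(T_{ij})+X_i(T_{ij})+\sigma_i(T_{ij})\veps_{ij}$. By the orthogonality \eqref{eq:mean-orthogonality}, the centering term $\sum_iw_{iq}N_i\int B_q d_{iq}f_T$ vanishes, so $S_q=M_{1,n}^{-1}\sum_i\xi_{i,J_m,a}$ exactly.

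The hypotheses of \Cref{lem:mean-direct-max} then need checking.
\begin{itemize}
\item Normalized B-splines are bounded by one and supported on intervals of length $O(J_m^{-1})$. Since $f_T$ is bounded, this gives $\int|B_q|f_T+\int B_q^2f_T\le C/J_m$.
\item The functions $d_{iq}$ are uniformly bounded by \Cref{lem:mean-proj-bias}.
\item The covariance bound with $\kappa_{ik}=\beta_X(\|S_i-S_k\|_2)^{1/2}$ and row sums at most $\Theta_n$ is exactly the setup of \Cref{cor:mean-direct-absorption}.
\end{itemize}

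Next I compute the lemma's quantities with these weights. They are $H_q=M_{1,n}^{-1}$, $F_q=M_{1,n}^{-2}\sum_iN_i^2=M_{1,n}^{-2}(M_{1,n}+M_{2,n})$ and $F_q-H_q=M_{1,n}^{-2}M_{2,n}$. Hence $V_q=v_{m,n}$ from \eqref{eq:mean-score-variance-proxy}, which matches \Cref{lem:mean-score-var}.

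The remainder in the corollary is $D_n\Lambda_nb_n^d\log(e+K_m)$, with $D_n=\delta_n^{-d}$ and $\Lambda_n=w_*N_*=\max_iN_i/M_{1,n}$. \Cref{ass:mean-leverage} gives $\max_iN_i\le C M_{1,n}/m_n$, so $\Lambda_n\le C/m_n$. Since $K_m\to\infty$, $\log(e+K_m)\asymp\log K_m$. Together these give \eqref{eq:mean-score-max}.

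For \eqref{eq:mean-score-max-reduced} I multiply by $J_m$ and simplify $J_m^2v_{m,n}$ under the leverage condition. Bounding term by term:
\begin{itemize}
\item $J_m/M_{1,n}=J_m/(m_n\bar N_n)$.
\item $M_{2,n}\le \max_iN_i\,M_{1,n}\le C M_{1,n}^2/m_n$, so $M_{2,n}/M_{1,n}^2\le C/m_n\le C/m_{\mathrm{eff},n}$.
\item $(\Theta_n-1)(M_{1,n}+M_{2,n})/M_{1,n}^2\le C\Theta_n\{1/M_{1,n}+1/m_n\}\le C\Theta_n/m_n=C/m_{\mathrm{eff},n}$, using $\bar N_n\ge1$.
\end{itemize}
This yields $J_m\sqrt{v_{m,n}\log K_m}\lesssim\sqrt{\log K_m\{J_m/(m_n\bar N_n)+1/m_{\mathrm{eff},n}\}}$.

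The main obstacle is absorbing the remainder $J_m\delta_n^{-d}b_n^d\log K_m/m_n$ into this leading term. I will divide the remainder by the leading term. Note that
\[
\frac{J_m}{m_n\bar N_n}+\frac1{m_{\mathrm{eff},n}}=\frac{\Theta_n+J_m/\bar N_n}{m_n}.
\]
So the ratio equals $\delta_n^{-d}b_n^dJ_m\{\log K_m/[m_n(\Theta_n+J_m/\bar N_n)]\}^{1/2}$, which tends to zero by the stated hypothesis. The remainder is therefore negligible and \eqref{eq:mean-score-max-reduced} follows. The delicate points are the exact cancellation of the centering term and the algebraic identity for the sum. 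Both look routine, so I expect the proof to go through cleanly.
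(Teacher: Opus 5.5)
Your proposal is correct and matches the paper's proof. The paper also applies \Cref{cor:mean-direct-absorption} with $w_{i,a}=M_{1,n}^{-1}$, $B_a=B_{a,m}$ and $d_{i,a}=m-m_{J_m}$, identifies $V_a=v_{m,n}$, and uses \Cref{ass:mean-leverage} to get $w_*\max_iN_i\le C/m_n$; it then reduces via $\sum_iN_i^2\le CM_{1,n}^2/m_n$ and the extra condition, which is exactly your ratio identity. One small point: $\log(e+K_m)\asymp\log K_m$ holds simply because $K_m\ge p\ge2$, so you do not need $K_m\to\infty$.
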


\begin{proof}
{
In \Cref{cor:mean-direct-absorption}, take $J=J_m$, $w_{i,a}=M_{1,n}^{-1}$, $B_a=B_{a,m}$ and $d_{i,a}=m-m_{J_m}$. The score is centered by \Cref{lem:mean-centered}, and the residual is uniformly bounded by \Cref{lem:mean-proj-bias}. Here $H_a=M_{1,n}^{-1}$, $F_a=(M_{1,n}+M_{2,n})/M_{1,n}^2$, and $V_a=v_{m,n}$. By \MainCref{ass:mean-leverage}, $w_*\max_iN_i\le C/m_n$, proving \eqref{eq:mean-score-max}.
}
Moreover, \MainCref{ass:mean-leverage} implies $M_{1,n}+M_{2,n}=\sum_iN_i^2\le CM_{1,n}^2/m_n$. Applying this reduction to \eqref{eq:mean-score-variance-proxy}, multiplying \eqref{eq:mean-score-max} by $J_m$, and using the additional condition in the lemma prove \eqref{eq:mean-score-max-reduced}.
\end{proof}

For an interior point $u$, use the weights $W_i(u)$ and local observation mass $M_{1,n}(u)$ from \MainCref{sec:method-local}, and write
\begin{align*}
\bar m_n\left(u,t\right)
&:=\frac{1}{M_{1,n}\left(u\right)}\sum_{i=1}^{m_n}W_i\left(u\right)N_i\,m\left(S_i,t\right),\\
\bbeta_{m,J_m}^{\mathrm{loc}}\left(u\right)
&:=Q_{m,J_m}^{-1}\E\left\{\bB_m\left(T\right)\bar m_n\left(u,T\right)\right\},
\qquad
m_{J_m}^{\mathrm{loc}}\left(u,t\right)
:=\bB_m\left(t\right)^\top\bbeta_{m,J_m}^{\mathrm{loc}}\left(u\right),
\end{align*}
and let
\begin{align*}
\xi_{i,J_m}\left(u\right) :=W_i\left(u\right)\sum_{j=1}^{N_i}\bB_m\left(T_{ij}\right) \left\{Y_{ij}-m_{J_m}^{\mathrm{loc}}\left(u,T_{ij}\right)\right\}, \bar\xi_{i,J_m}\left(u\right) :=\xi_{i,J_m}\left(u\right)-\E\left\{\xi_{i,J_m}\left(u\right)\right\}.
\end{align*}
These quantities are used only in the deterministic and stochastic decompositions below.

\begin{lemma}\label[lemma]{lem:local-mean-det}
Under \MainCref{ass:local-mean,ass:time},
\begin{equation}\label{eq:local-mean-det1}
\sup_{t\in\left[0,1\right]}|\bar m_n\left(u_0,t\right)-M\left(u_0,t\right)|
\le C\left(a_n^{\gamma_u}+L_n^{-1}\right),
\end{equation}
\begin{equation}\label{eq:local-mean-det2}
\sup_{t\in\left[0,1\right]}|m_{J_m}^{\mathrm{loc}}\left(u_0,t\right)-\bar m_n\left(u_0,t\right)|
\le C J_m^{-q_m^\ast},
\end{equation}
and therefore
\begin{equation}\label{eq:local-mean-det3}
\sup_{t\in\left[0,1\right]}|m_{J_m}^{\mathrm{loc}}\left(u_0,t\right)-M\left(u_0,t\right)|
\le C\left(J_m^{-q_m^\ast}+a_n^{\gamma_u}+L_n^{-1}\right).
\end{equation}
\end{lemma}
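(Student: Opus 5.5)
The plan has three steps, one for each displayed bound: a direct averaging argument for \eqref{eq:local-mean-det1}, a reduction to \Cref{lem:mean-proj-bias} for \eqref{eq:local-mean-det2}, and the triangle inequality for \eqref{eq:local-mean-det3}.

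\emph{Proof of \eqref{eq:local-mean-det1}.} The function $\bar m_n(u_0,\cdot)$ is a convex combination of the curves $m(S_i,\cdot)$, with weights $W_i(u_0)N_i/M_{1,n}(u_0)$. These weights are nonnegative and sum to one. Write
\[
\bar m_n(u_0,t)-M(u_0,t)=\sum_i \frac{W_i(u_0)N_i}{M_{1,n}(u_0)}\bigl[\{m(S_i,t)-M(U_i,t)\}+\{M(U_i,t)-M(u_0,t)\}\bigr].
\]
The first bracket is bounded uniformly in $t$ by $CL_n^{-1}$, by \eqref{hol5} (\Cref{ass:local-mean}(iii)). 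The kernel is supported on $[-1,1]^d$, so $W_i(u_0)>0$ forces $\|U_i-u_0\|_2\le\sqrt d\,a_n$. On such sites the Hölder condition \Cref{ass:local-mean}(i) bounds the second bracket by $C(\sqrt d a_n)^{\gamma_u}$. Taking the convex combination of these two bounds gives \eqref{eq:local-mean-det1}.

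\emph{Proof of \eqref{eq:local-mean-det2}.} By definition, $m^{\mathrm{loc}}_{J_m}(u_0,\cdot)$ is the $L^2(f_T)$ projection of $g:=\bar m_n(u_0,\cdot)$ onto the spline space. So I would repeat the argument of \Cref{lem:mean-proj-bias} with $m$ replaced by $g$. That argument needs only two inputs: a sup-norm spline approximant $\tilde g$ with $\|g-\tilde g\|_\infty\le C\|g\|_{C^{q_m,\nu_m}}J_m^{-q_m^\ast}$, which follows from de Boor's Jackson-type theorem since $p\ge q_m^\ast+1$; and the stability bound $\|\bB_m(t)\|_1\|Q_{m,J_m}^{-1}\|_\infty \max_a\E B_{a,m}(T)\le C$, which follows from \Cref{lem:Bspline-basic,lem:gram-pop}. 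The only point needing care, and the mild obstacle, is a Hölder bound on $g$ that is uniform in $n$. Here \Cref{ass:local-mean}(iv) helps: each $m(S_i,\cdot)$ lies in a bounded set of $C^{q_m,\nu_m}[0,1]$, and the Hölder norm is convex. Hence the convex combination $g$ satisfies $\|g\|_{C^{q_m,\nu_m}}\le\sup_s\|m(s,\cdot)\|_{C^{q_m,\nu_m}}\le C$, independently of $n$ and $u_0$. This gives \eqref{eq:local-mean-det2}.

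\emph{Proof of \eqref{eq:local-mean-det3}.} Combine \eqref{eq:local-mean-det1} and \eqref{eq:local-mean-det2} by the triangle inequality. All constants depend only on $K$, $d$, the spline order, $f_T$ and the constants in \Cref{ass:local-mean}. Nothing depends on $u_0$ beyond its interiority, and interiority is used only implicitly, to guarantee $M_{1,n}(u_0)>0$.
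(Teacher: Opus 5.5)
Your proposal is correct and is essentially the paper's proof: the same convex-combination split of \eqref{eq:local-mean-det1} into the $L_n^{-1}$ local-stationarity term and the $a_n^{\gamma_u}$ H\"older term over the kernel support, the same reduction of \eqref{eq:local-mean-det2} to the projection-stability argument of \Cref{lem:mean-proj-bias}, and the triangle inequality for \eqref{eq:local-mean-det3}; your convexity-of-the-H\"older-norm argument just spells out the uniform $C^{q_m,\nu_m}$ bound on $\bar m_n(u_0,\cdot)$ that the paper asserts from \Cref{ass:local-mean}(ii),(iv). One small correction to your closing remark: interiority of $u_0$ by itself does not guarantee $M_{1,n}(u_0)>0$, since that requires sites receiving positive kernel weight; this is presupposed by the definition of $\bar m_n$ and supplied by \Cref{ass:local-mass}, not by interiority.
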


\begin{proof}
To prove \eqref{eq:local-mean-det1}, write
\begin{align*}
|\bar m_n\left(u_0,t\right)-M\left(u_0,t\right)|
&\le
\frac{1}{M_{1,n}\left(u_0\right)}
\sum_{i=1}^{m_n}W_i\left(u_0\right)N_i\,|m\left(S_i,t\right)-M\left(U_i,t\right)|
\\
&\quad+
\frac{1}{M_{1,n}\left(u_0\right)}
\sum_{i=1}^{m_n}W_i\left(u_0\right)N_i\,|M\left(U_i,t\right)-M\left(u_0,t\right)|.
\end{align*}
The first term is bounded by $CL_n^{-1}$ by \MainCref{ass:local-mean}(iii).
For the second term, $W_i\left(u_0\right)\neq0$ implies $\norm{U_i-u_0}_2\le C a_n$, and \MainCref{ass:local-mean}(i) bounds it by $Ca_n^{\gamma_u}$.
Thus, \eqref{eq:local-mean-det1} follows.

For \eqref{eq:local-mean-det2}, \MainCref{ass:local-mean}(ii),(iv) imply that $\bar m_n\left(u_0,\cdot\right)$ is uniformly bounded in $C^{q_m,\nu_m}\left[0,1\right]$, with time-spline order $p\ge q_m^\ast+1$.
The deterministic spline approximation theorem (again \citet[Chapter~12]{deBoor2001}) gives a spline $\widetilde m_{J_m}$ such that $\sup_{t\in\left[0,1\right]}|\bar m_n\left(u_0,t\right)-\widetilde m_{J_m}\left(t\right)|\le C J_m^{-q_m^\ast}$.
Projection onto the spline space under the population Gram inner product preserves this order by the argument used for \Cref{lem:mean-proj-bias}; therefore, \eqref{eq:local-mean-det2} follows.
Finally, \eqref{eq:local-mean-det3} follows by combining \eqref{eq:local-mean-det1} and \eqref{eq:local-mean-det2}.

\end{proof}

\begin{lemma}\label[lemma]{lem:local-mean-gram}
Under \MainCref{ass:time},
\begin{equation*}
\mnorm{\hat Q_{m,J_m}\left(u_0\right)-Q_{m,J_m}}
=
O_p\!\left(
\sqrt{\frac{\log K_m}{M_{1,n}\left(u_0\right)J_m}}
+
\frac{\log K_m}{M_{1,n}\left(u_0\right)}
\right).
\end{equation*}
If
\begin{equation}\label{eq:local-mean-gram-cond}
J_m\left(
\sqrt{\frac{\log K_m}{M_{1,n}\left(u_0\right)J_m}}
+
\frac{\log K_m}{M_{1,n}\left(u_0\right)}
\right)\to0,
\end{equation}
then
\begin{align*}
\oinf{\hat Q_{m,J_m}\left(u_0\right)^{-1}}
&=O_p\left(J_m\right),\\
\oinf{\hat Q_{m,J_m}\left(u_0\right)^{-1}-Q_{m,J_m}^{-1}}
&=
O_p\!\left(
J_m^2\sqrt{\frac{\log K_m}{M_{1,n}\left(u_0\right)J_m}}
+
J_m^2\frac{\log K_m}{M_{1,n}\left(u_0\right)}
\right).
\end{align*}
\end{lemma}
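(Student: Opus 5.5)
The proof copies the argument for \Cref{lem:mean-gram}, with the unweighted pooled sum replaced by the kernel-weighted one. Fix a coordinate pair $(a,b)$ and write
\begin{equation*}
\left(\hat Q_{m,J_m}(u_0)-Q_{m,J_m}\right)_{ab}
=\frac{1}{M_{1,n}(u_0)}\sum_{i=1}^{m_n}W_i(u_0)\sum_{j=1}^{N_i}
\left[B_{a,m}(T_{ij})B_{b,m}(T_{ij})-\E\left\{B_{a,m}(T)B_{b,m}(T)\right\}\right].
\end{equation*}
The exact centering holds because $M_{1,n}(u_0)=\sum_iW_i(u_0)N_i$ is the total weight. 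The weights $W_i(u_0)$ are deterministic, since the sites are fixed, and by \MainCref{ass:time} the times $T_{ij}$ are i.i.d. Hence the summands are independent across all $(i,j)$. Each is bounded in absolute value by $\sup K/M_{1,n}(u_0)$. Local support again kills all entries with $|a-b|>p-1$. For the remaining entries the variance of a single product is at most $CJ_m^{-1}$, so the total variance is at most
\begin{equation*}
CJ_m^{-1}\,\frac{\sum_iW_i(u_0)^2N_i}{M_{1,n}(u_0)^2}\le \frac{C\sup K}{J_m M_{1,n}(u_0)},
\end{equation*}
using $W_i(u_0)^2\le(\sup K)\,W_i(u_0)$.

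Next, Bernstein's inequality gives the tail bound
\begin{equation*}
2\exp\left[-cM_{1,n}(u_0)x^2/\{J_m^{-1}+x\}\right].
\end{equation*}
Take $x=A\left(\sqrt{\log K_m/(M_{1,n}(u_0)J_m)}+\log K_m/M_{1,n}(u_0)\right)$ and apply a union bound over the $O(K_m)$ nonzero entries. This yields the max-norm rate exactly as in \Cref{lem:mean-gram}. No spatial dependence enters, because the Gram matrix involves only the times.

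For the inverse, note that $\hat Q_{m,J_m}(u_0)-Q_{m,J_m}$ has fixed bandwidth depending only on $p$. Its $\ell_\infty$ operator norm is therefore at most a constant times its max norm, which is $o_p(J_m^{-1})$ under \eqref{eq:local-mean-gram-cond}. Combined with $\oinf{Q_{m,J_m}^{-1}}\le CJ_m$ from \Cref{lem:gram-pop}, the Neumann series for $Q_{m,J_m}^{-1}\{I+(\hat Q_{m,J_m}(u_0)-Q_{m,J_m})Q_{m,J_m}^{-1}\}^{-1}$ converges with probability tending to one and gives $\oinf{\hat Q_{m,J_m}(u_0)^{-1}}=O_p(J_m)$. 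The resolvent identity
\begin{equation*}
\hat Q^{-1}-Q^{-1}=Q^{-1}(Q-\hat Q)\hat Q^{-1}
\end{equation*}
then gives the stated bound $J_m^2$ times the max-norm rate.

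The only step needing care is the Bernstein variance and scale computation with kernel weights. I expect it to be routine because $K$ is bounded, so no appeal to \MainCref{ass:local-mass} is needed.
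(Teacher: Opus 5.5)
Your proposal is correct and follows the paper's proof essentially step for step. You apply Bernstein's inequality to each entry, using the variance bound $\sum_i W_i(u_0)^2N_i\le C M_{1,n}(u_0)$; the paper simply asserts this bound, and you justify it explicitly via $W_i(u_0)^2\le(\sup K)\,W_i(u_0)$. You then take a union bound over the $O(K_m)$ nonzero entries and reuse the fixed-bandwidth Neumann-series and resolvent argument of \Cref{lem:mean-gram} for the inverse bounds.
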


\begin{proof}
Fix coordinates $\left(a,b\right)$.
Then
\begin{align*}
\left\{\hat Q_{m,J_m}\left(u_0\right)-Q_{m,J_m}\right\}_{ab}
&=
\frac{1}{M_{1,n}\left(u_0\right)}
\sum_{i=1}^{m_n}\sum_{j=1}^{N_i}
W_i\left(u_0\right)\\
&\quad{}
\left[
B_{a,m}\left(T_{ij}\right)B_{b,m}\left(T_{ij}\right)-\E\left\{B_{a,m}\left(T\right)B_{b,m}\left(T\right)\right\}
\right].
\end{align*}
The summands are independent over $\left(i,j\right)$, uniformly bounded, and have variance bounded by $C W_i\left(u_0\right)^2J_m^{-1}$ when the supports overlap and by $0$ otherwise.
Since $\sum_i W_i\left(u_0\right)^2N_i\le C M_{1,n}\left(u_0\right)$, Bernstein's inequality \citep[Corollary~2.11]{BoucheronLugosiMassart2013} and a union bound over the $O\left(K_m\right)$ nonzero entries yield the displayed max-norm rate.
The inverse bound follows from the same Neumann-series argument as in \Cref{lem:mean-gram}.

\end{proof}

\begin{lemma}\label[lemma]{lem:local-mean-score}
Assume \MainCref{ass:design,ass:beta,ass:time,ass:tails,ass:balanced,ass:local-mean,ass:local-mass}.  Define
\begin{align*}
A_{1,n}(u_0)&:=\sum_iW_i(u_0)^2N_i,\\
A_{2,n}(u_0)&:=\sum_iW_i(u_0)^2N_i(N_i-1),\\
A_{0,n}(u_0)&:=\sum_iW_i(u_0)^2N_i^2=A_{1,n}(u_0)+A_{2,n}(u_0),
\end{align*}
and
\begin{equation}\label{eq:local-score-variance-proxy}
v_{\mathrm{loc},n}(u_0):=
\frac{1}{M_{1,n}(u_0)^2}
\left\{
\frac{A_{1,n}(u_0)}{J_m}
+
\frac{A_{2,n}(u_0)+(\Theta_n-1)A_{0,n}(u_0)}{J_m^2}
\right\}.
\end{equation}
For every coordinate $a$,
\[
\Var\left\{\frac{1}{M_{1,n}(u_0)}\sum_i\bar\xi_{i,J_m,a}(u_0)\right\}
\le Cv_{\mathrm{loc},n}(u_0),
\]
{
Moreover,
}
\begin{equation}\label{eq:local-score-variance-simple}
v_{\mathrm{loc},n}(u_0)
\le C\left\{
\frac{1}{m_na_n^d\bar N_nJ_m}
+
\frac{1}{m_{\mathrm{eff},n}a_n^dJ_m^2}
\right\}.
\end{equation}
{
With $p_n=K_m$ in the definition of $b_n$,
}
\begin{equation}\label{eq:local-mean-score-max}
\left\|
\frac{1}{M_{1,n}(u_0)}\sum_i\bar\xi_{i,J_m}(u_0)
\right\|_\infty
=
O_p\left[
\sqrt{\log K_m\left\{
\frac{1}{m_na_n^d\bar N_nJ_m}
+
\frac{1}{m_{\mathrm{eff},n}a_n^dJ_m^2}
\right\}}
+
\frac{\delta_n^{-d}b_n^d\log K_m}{m_na_n^d}
\right].
\end{equation}
If $\delta_n^{-d}b_n^dJ_m\left\{a_n^{-d}\log K_m/\left[m_n\left(\Theta_n+J_m/\bar N_n\right)\right]\right\}^{1/2}\to0$, then
\begin{equation}\label{eq:local-mean-score-max-reduced}
J_m\left\|
\frac{1}{M_{1,n}(u_0)}\sum_i\bar\xi_{i,J_m}(u_0)
\right\|_\infty
=O_p\left[
\sqrt{\log K_m\left\{
\frac{J_m}{m_na_n^d\bar N_n}+\frac{1}{m_{\mathrm{eff},n}a_n^d}
\right\}}
\right].
\end{equation}
\end{lemma}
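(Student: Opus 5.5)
The plan follows the proofs of \Cref{lem:mean-score-var,lem:mean-score-max}, with the local weights $W_i(u_0)$ in place of the constant weights $M_{1,n}^{-1}$.

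\textbf{Variance bound.} Fix a coordinate $a$, write $B_a=B_{a,m}$, and set $R_{ij}=Y_{ij}-m^{\mathrm{loc}}_{J_m}(u_0,T_{ij})$. The residual mean $m(S_i,\cdot)-m^{\mathrm{loc}}_{J_m}(u_0,\cdot)$ is uniformly bounded, by \MainCref{ass:local-mean}(iv) and \Cref{lem:local-mean-det}.
\begin{itemize}
\item \emph{Same site, same time.} Exogeneity, \MainCref{ass:tails} and the support of $B_a$ give $\E\{B_a(T_{ij})^2R_{ij}^2\}\le CJ_m^{-1}$.
\item \emph{Same site, different times.} Independence of the two observation times gives $|\E\{B_a(T_{ij})B_a(T_{i\ell})R_{ij}R_{i\ell}\}|\le CJ_m^{-2}$ for $j\neq\ell$.
\item \emph{Different sites.} After centering, only the latent-field product survives. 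Rio's inequality gives $|\Cov(\bar\xi_{i,a},\bar\xi_{k,a})|\le CW_iW_kN_iN_kJ_m^{-2}\beta_X(\|S_i-S_k\|_2)^{1/2}$.
\end{itemize}
For the cross-site sum, I use $2W_iN_iW_kN_k\le W_i^2N_i^2+W_k^2N_k^2$ together with definition \eqref{eq:Theta}, which bounds it by $(\Theta_n-1)A_{0,n}(u_0)$. Summing the three contributions and dividing by $M_{1,n}(u_0)^2$ gives $Cv_{\mathrm{loc},n}(u_0)$.

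\textbf{Simplification \eqref{eq:local-score-variance-simple}.} Under \MainCref{ass:balanced,ass:local-mass} and boundedness of $K$:
\begin{itemize}
\item $M_{1,n}(u_0)\asymp m_na_n^d\bar N_n$;
\item $A_{1,n}(u_0)\le Cm_na_n^d\bar N_n$;
\item $A_{0,n}(u_0)\asymp m_na_n^d\bar N_n^2$.
\end{itemize}
Hence the first term of $v_{\mathrm{loc},n}(u_0)$ is of order $(m_na_n^d\bar N_nJ_m)^{-1}$. For the second term, since $A_{2,n}\le A_{0,n}$ and $\Theta_n\ge1$,
\[
\frac{A_{2,n}+(\Theta_n-1)A_{0,n}}{M_{1,n}(u_0)^2}\le \frac{\Theta_nA_{0,n}}{M_{1,n}(u_0)^2}\le \frac{C\Theta_n}{m_na_n^d}=\frac{C}{m_{\mathrm{eff},n}a_n^d},
\]
which yields the $J_m^{-2}$ term.

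\textbf{Maximal inequality \eqref{eq:local-mean-score-max}.} I apply \Cref{cor:mean-direct-absorption}, which yields $\max_q\sqrt{V_q\log(e+p_n)}+D_n\Lambda_nb_n^d\log(e+p_n)$ with $\Lambda_n=w_*N_*$. The inputs are
\[
p=K_m,\quad J=J_m,\quad w_{ia}=W_i(u_0)/M_{1,n}(u_0),\quad d_{ia}=m(S_i,\cdot)-m^{\mathrm{loc}}_{J_m}(u_0,\cdot).
\]
The subtracted integral in $S_q$ equals exactly $\E\xi_{i,a}(u_0)$, so $S_a$ is the centered score. With these weights $H_a,F_a$ match the $A$'s, and $V_a$ coincides with $v_{\mathrm{loc},n}(u_0)$ up to constants, which gives the variance part via \eqref{eq:local-score-variance-simple}.

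The step I expect to be the main obstacle is the remainder term. It requires $w_*\max_iN_i\le C/(m_na_n^d)$, which follows from $W_i\le\|K\|_\infty$, $N_i\le C\bar N_n$ and $M_{1,n}(u_0)\gtrsim m_na_n^d\bar N_n$. Then $D_n\Lambda_nb_n^d\log K_m\le C\delta_n^{-d}b_n^d\log K_m/(m_na_n^d)$, and $\log(e+K_m)\asymp\log K_m$ since $K_m\to\infty$. This proves \eqref{eq:local-mean-score-max}.

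\textbf{Reduced form \eqref{eq:local-mean-score-max-reduced}.} Multiply \eqref{eq:local-mean-score-max} by $J_m$. The variance part becomes $\sqrt{\log K_m\{J_m/(m_na_n^d\bar N_n)+1/(m_{\mathrm{eff},n}a_n^d)\}}$. The square of this bracket equals $\log K_m\,\Theta_n^{-1}(m_na_n^d)^{-1}\{\Theta_n+J_m/\bar N_n\}\cdot\Theta_n/\Theta_n$, that is, $\log K_m/[m_na_n^d\,\Theta_n]\cdot(\Theta_n+J_m/\bar N_n)/\Theta_n$; I simply compare the two terms directly. The ratio of the remainder $J_m\delta_n^{-d}b_n^d\log K_m/(m_na_n^d)$ to the variance part is bounded by
\[
\delta_n^{-d}b_n^dJ_m\left\{\frac{a_n^{-d}\log K_m}{m_n(\Theta_n+J_m/\bar N_n)}\right\}^{1/2}\cdot\frac{\Theta_n+J_m/\bar N_n}{\Theta_n\,(1+J_m/(\Theta_n\bar N_n))}\cdot C,
\]
and the middle factor equals $1$. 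By the stated hypothesis this ratio tends to zero, so the remainder is absorbed into the variance part.
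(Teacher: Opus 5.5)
Your proposal is correct and follows the paper's proof step for step: the same-site diagonal/off-diagonal variance calculation, Rio's inequality plus symmetrization to get $(\Theta_n-1)A_{0,n}(u_0)$, the balanced-count and local-mass simplification, then \Cref{cor:mean-direct-absorption} with $w_{ia}=W_i(u_0)/M_{1,n}(u_0)$, $w_*\max_iN_i\le C/(m_na_n^d)$, and multiplication by $J_m$. The only blemish is the abandoned aside about the ``square of this bracket,'' which carries spurious factors of $\Theta_n^{-1}$. The bracket is simply $\log K_m(\Theta_n+J_m/\bar N_n)/(m_na_n^d)$, and your final ratio display, whose middle factor equals $1$, is correct.
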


\begin{proof}
Fix $a$.  The deterministic residual $m(S_i,t)-m_{J_m}^{\mathrm{loc}}(u_0,t)$ is uniformly bounded by \MainCref{ass:local-mean}(iv) and \Cref{lem:local-mean-det}.  The same-site diagonal/off-diagonal calculation gives
\[
\Var\{\bar\xi_{i,J_m,a}(u_0)\}
\le CW_i(u_0)^2\left\{\frac{N_i}{J_m}+\frac{N_i(N_i-1)}{J_m^2}\right\}.
\]
For $i\ne k$, centering does not change covariance, and only the latent-field product remains.  Thus
\[
|\Cov\{\bar\xi_{i,J_m,a}(u_0),\bar\xi_{k,J_m,a}(u_0)\}|
\le CW_i(u_0)W_k(u_0)N_iN_kJ_m^{-2}
\beta_X(\|S_i-S_k\|_2)^{1/2}.
\]
Symmetrization gives
\[
\sum_{i\ne k}W_iW_kN_iN_k\beta_X(\|S_i-S_k\|_2)^{1/2}
\le(\Theta_n-1)\sum_iW_i^2N_i^2.
\]
{
This proves the variance bound with \eqref{eq:local-score-variance-proxy}.
}

Balanced counts and \MainCref{ass:local-mass} imply
\[
M_{1,n}(u_0)\asymp m_na_n^d\bar N_n,
\quad
A_{1,n}(u_0)\le Cm_na_n^d\bar N_n,
\quad
A_{0,n}(u_0)\le Cm_na_n^d\bar N_n^2.
\]
Since $A_{2,n}\le A_{0,n}$ and $m_n=\Theta_nm_{\mathrm{eff},n}$, these bounds give \eqref{eq:local-score-variance-simple}.

{
For the maximal inequality, apply \Cref{cor:mean-direct-absorption} with $J=J_m$ and
}
\[
w_{i,a}=\frac{W_i(u_0)}{M_{1,n}(u_0)},\qquad
{B_a}=B_{a,m},\qquad
d_{i,a}(t)=m(S_i,t)-m_{J_m}^{\mathrm{loc}}(u_0,t).
\]
{
Here $H_a=A_{1,n}(u_0)/M_{1,n}(u_0)^2$, $F_a=A_{0,n}(u_0)/M_{1,n}(u_0)^2$, and $V_a=v_{\mathrm{loc},n}(u_0)$. Balanced counts and local mass give $w_*\max_iN_i\le C(m_na_n^d)^{-1}$. Substitution gives \eqref{eq:local-mean-score-max}.
}  Multiplication by $J_m$ and the additional condition in the lemma give \eqref{eq:local-mean-score-max-reduced}.
\end{proof}

\begin{lemma}\label[lemma]{lem:spatial-spline-mean-bias-gram}
Under \MainCref{ass:time,ass:local-mean,ass:spatial-spline-mean,ass:spatial-spline-design} {
and \MainCref{ass:balanced}
}, use $Q_{M,J_s,J_m}$, $r_{M,J_s,J_m}$, $\btheta_{M,J_s,J_m}^{\circ}$, and $M_{J_s,J_m}^{\circ}$ from \MainCref{sec:method-spatial-spline}. Then
\begin{equation}\label{eq:spatial-spline-mean-bias}
\sup_{u\in[0,1]^d}\sup_{t\in[0,1]}
|M_{J_s,J_m}^{\circ}(u,t)-M(u,t)|
\le C\{J_s^{-q_u^\ast}+J_m^{-q_m^\ast}+L_n^{-1}\}.
\end{equation}
Furthermore, $\|Q_{M,J_s,J_m}^{-1}\|_\infty\le CJ_s^dJ_m$, and the population normal equation is
\begin{equation}\label{eq:spatial-spline-mean-pop-orth}
\sum_iN_i\int_0^1\bZ_{J_s,J_m}(U_i,x)
\{m(S_i,x)-M_{J_s,J_m}^{\circ}(U_i,x)\}f_T(x)\,dx=0.
\end{equation}
\end{lemma}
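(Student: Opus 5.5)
The plan is to exploit the Kronecker structure of the population Gram matrix. Because $\bZ_{J_s,J_m}(U_i,x)\bZ_{J_s,J_m}(U_i,x)^\top=\{\bA_s(U_i)\bA_s(U_i)^\top\}\otimes\{\bB_m(x)\bB_m(x)^\top\}$ and the time integral does not involve $U_i$, we have
\[
Q_{M,J_s,J_m}=G_s\otimes Q_{m,J_m},\qquad G_s:=\sum_{i=1}^{m_n}\pi_{i,n}\bA_s(U_i)\bA_s(U_i)^\top,
\]
with $\pi_{i,n}=N_i/M_{1,n}$ as in \Cref{lem:spatial-spline-discrete-norm}. Then $Q_{M,J_s,J_m}^{-1}=G_s^{-1}\otimes Q_{m,J_m}^{-1}$, and the $\ell_\infty$ operator norm is multiplicative over Kronecker products. \Cref{lem:gram-pop} already gives $\oinf{Q_{m,J_m}^{-1}}\le CJ_m$, so it remains to show $\oinf{G_s^{-1}}\le CJ_s^d$.

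By \eqref{eq:spatial-coefficient-norm}, the eigenvalues of $G_s$ lie in $[cJ_s^{-d},CJ_s^{-d}]$, uniformly in $n$. Moreover, $(G_s)_{ab}=0$ unless the supports of $A_a$ and $A_b$ overlap, so $G_s$ is sparse with respect to the lattice graph on tensor indices. I would apply the Demko--Moss--Smith decay bound in its graph-distance form. Since $J_s^dG_s$ has bounded condition number, it gives $|(G_s^{-1})_{ab}|\le CJ_s^d\rho^{\mathrm{dist}(a,b)}$ for some $\rho<1$. The number of indices at graph distance $k$ grows only like $k^{d-1}$, so the row sums are bounded by $CJ_s^d$.

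This is the step I expect to be the main obstacle. In any linear ordering of the $d$-dimensional tensor indices the matrix is not banded with fixed bandwidth, so the one-dimensional argument of \Cref{lem:gram-pop} does not transfer directly. The graph-distance version is the form I would try first, with constants depending only on $d$, $p_s$, and the condition number.

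The normal equation \eqref{eq:spatial-spline-mean-pop-orth} is immediate from the definitions. Indeed, $Q_{M,J_s,J_m}\btheta^{\circ}_{M,J_s,J_m}=r_{M,J_s,J_m}$ by definition of $\btheta^{\circ}_{M,J_s,J_m}$, and multiplying this identity by $M_{1,n}$ and rearranging gives exactly \eqref{eq:spatial-spline-mean-pop-orth}.

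For the bias bound, I proceed in four steps.
\begin{itemize}
\item \textbf{Approximant.} By tensor-product spline approximation under \Cref{ass:spatial-spline-mean} (Schumaker, Ch.~12), there is $\tilde\btheta$ such that $\tilde M=\bZ^\top\tilde\btheta$ satisfies $\sup_{u,t}|M-\tilde M|\le C(J_s^{-q_u^\ast}+J_m^{-q_m^\ast})$.
\item \textbf{Coefficient difference.} Since $\tilde M$ lies in the spline space,
\[
\btheta^\circ_{M,J_s,J_m}-\tilde\btheta=Q_{M,J_s,J_m}^{-1}\,\frac{1}{M_{1,n}}\sum_{i=1}^{m_n}N_i\int_0^1\bZ_{J_s,J_m}(U_i,x)\{m(S_i,x)-\tilde M(U_i,x)\}f_T(x)\,dx .
\]
By \eqref{hol5} and the approximation step, the residual $m(S_i,x)-\tilde M(U_i,x)$ is bounded by $C(J_s^{-q_u^\ast}+J_m^{-q_m^\ast}+L_n^{-1})$ uniformly in $i$ and $x$.
\item \textbf{Coordinate bound.} The coordinate $(a,b)$ of the vector on the right is at most this residual bound times $\sum_i\pi_{i,n}A_a(U_i)\int B_{b,m}f_T$. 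By \eqref{eq:spatial-coordinate-load} and the support length of $B_{b,m}$, this factor is at most $CJ_s^{-d}J_m^{-1}$.
\item \textbf{Conclusion.} We have $\|\bZ_{J_s,J_m}(u,t)\|_1\le1$ by partition of unity in both directions, and $\oinf{Q_{M,J_s,J_m}^{-1}}\le CJ_s^dJ_m$ from above. Hence $\sup_{u,t}|M^\circ_{J_s,J_m}-\tilde M|\le C(J_s^{-q_u^\ast}+J_m^{-q_m^\ast}+L_n^{-1})$. The triangle inequality with the approximation step then yields \eqref{eq:spatial-spline-mean-bias}.
\end{itemize}
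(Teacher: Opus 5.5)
Your proposal is correct and follows the paper's proof essentially step for step. It uses the Kronecker factorization $Q_{M,J_s,J_m}=G_s\otimes Q_{m,J_m}$ (the paper writes $Q_{s,J_s,n}$ for your $G_s$) and the spectral bounds from \eqref{eq:spatial-coefficient-norm}. It then obtains exponential decay of the entries of $(J_s^dG_s)^{-1}$ in graph distance, via the polynomial-approximation argument the paper attributes to Benzi--Razouk (the graph form of Demko--Moss--Smith), and sums over vertices at each graph distance. The bias bound uses the same approximant, coefficient-difference and coordinate-load argument as the paper.
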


\begin{proof}
Tensor-product spline approximation \citep[Chapter~12]{Schumaker2007} gives a spline $\widetilde M_{J_s,J_m}(u,t)=\bZ_{J_s,J_m}(u,t)^\top\widetilde\btheta$ satisfying
\[
\sup_{u,t}|\widetilde M_{J_s,J_m}(u,t)-M(u,t)|
\le C\{J_s^{-q_u^\ast}+J_m^{-q_m^\ast}\}.
\]
Together with \MainCref{ass:local-mean}(iii),
\[
\max_i\sup_t|m(S_i,t)-\widetilde M_{J_s,J_m}(U_i,t)|
\le CR_n,
\qquad
R_n:=J_s^{-q_u^\ast}+J_m^{-q_m^\ast}+L_n^{-1}.
\]
The normal equations imply
\[
\btheta_{M,J_s,J_m}^{\circ}-\widetilde\btheta
=Q_{M,J_s,J_m}^{-1}\frac1{M_{1,n}}\sum_iN_i
\int\bZ_{J_s,J_m}(U_i,x)
\{m(S_i,x)-\widetilde M_{J_s,J_m}(U_i,x)\}f_T(x)\,dx.
\]
By \Cref{lem:spatial-spline-discrete-norm}, $M_{1,n}^{-1}\sum_iN_iA_a(U_i)\le CJ_s^{-d}$, while $\E B_{b,m}(T)\le CJ_m^{-1}$.  Hence the moment vector on the right is bounded coordinatewise by $CJ_s^{-d}J_m^{-1}R_n$.

Let
\[
Q_{s,J_s,n}:=\frac1{M_{1,n}}\sum_iN_i\bA_s(U_i)\bA_s(U_i)^\top.
\]
Equation \eqref{eq:spatial-coefficient-norm} gives $cJ_s^{-d}\|v\|_2^2\le v^\top Q_{s,J_s,n}v\le CJ_s^{-d}\|v\|_2^2$. Set $H_{s,n}=J_s^dQ_{s,J_s,n}$.  Its spectrum lies in a fixed interval $[c,C]\subset(0,\infty)$.  Let $G_{s,n}$ be its sparsity graph.  Local support of fixed-order tensor-product B-splines implies that every graph edge changes each tensor index by at most a fixed number; consequently, every graph ball of radius $r$ contains at most $C(1+r)^d$ vertices.

We use the polynomial approximation argument for sparse matrix functions in \citet{BenziRazouk2007}. Because $x\mapsto x^{-1}$ is analytic on $[c,C]$, there are constants $C_0<\infty$, $\rho\in(0,1)$, and polynomials $P_k$ of degree at most $k$ such that $\sup_{x\in[c,C]}|x^{-1}-P_k(x)|\le C_0\rho^k$. If the graph distance between basis indices $a$ and $a'$ is $r>k$, then $\{P_k(H_{s,n})\}_{aa'}=0$, because every power of degree at most $k$ connects only vertices at graph distance at most $k$.  Taking $k=r-1$ and applying the spectral calculus gives $|(H_{s,n}^{-1})_{aa'}|\le C_0\rho^{r-1}, \quad r\ge1$. The diagonal entries are bounded by $\|H_{s,n}^{-1}\|_2\le c^{-1}$, and entries between distinct connected components are zero.  Summing over graph-distance shells therefore yields
\[
\|H_{s,n}^{-1}\|_\infty
\le C\sum_{r=0}^\infty(1+r)^d\rho^r\le C,
\qquad
\|Q_{s,J_s,n}^{-1}\|_\infty\le CJ_s^d.
\]
Independence of $T$ from the site design gives
$Q_{M,J_s,J_m}=Q_{s,J_s,n}\otimes Q_{m,J_m}$.  Thus, by \Cref{lem:gram-pop},
\[
\|Q_{M,J_s,J_m}^{-1}\|_\infty
=\|Q_{s,J_s,n}^{-1}\|_\infty\|Q_{m,J_m}^{-1}\|_\infty
\le CJ_s^dJ_m.
\]
Combining this inverse bound with the coordinatewise moment bound proves \eqref{eq:spatial-spline-mean-bias}.  Equation \eqref{eq:spatial-spline-mean-pop-orth} is the defining normal equation.
\end{proof}

\begin{lemma}\label[lemma]{lem:spatial-spline-mean-score}
Assume \MainCref{ass:design,ass:beta,ass:time,ass:tails,ass:local-mean,ass:spatial-spline-mean,ass:spatial-spline-design} {
and \MainCref{ass:balanced}
}.  Let
\[
p_{M,n}:=K_sK_m,
\qquad
r_{M,Q,n}:=
\sqrt{\frac{J_s^{-d}\log p_{M,n}}{M_{1,n}J_m}}
+
\frac{\log p_{M,n}}{M_{1,n}},
\]
{
and take $p_n=p_{M,n}$ in the definition of $b_n$.
}  Then
$\|\hat Q_{M,J_s,J_m}-Q_{M,J_s,J_m}\|_{\max}=O_p(r_{M,Q,n})$.  If $J_s^dJ_mr_{M,Q,n}\to0$, then
\[
\|\hat Q_{M,J_s,J_m}^{-1}\|_\infty=O_p(J_s^dJ_m),
\qquad
\|\hat Q_{M,J_s,J_m}^{-1}-Q_{M,J_s,J_m}^{-1}\|_\infty
=O_p\{(J_s^dJ_m)^2r_{M,Q,n}\}=o_p(J_s^dJ_m).
\]
Use the centered score $\omega_{i,J_s,J_m}$ from \MainCref{sec:method-spatial-spline}. For $q=(a,b)$, set
\begin{align}
v_{\mathrm{sp},n,q}:={}&\frac{1}{M_{1,n}^2}\left[
\frac{1}{J_m}\sum_iA_a(U_i)^2N_i
+
\frac{1}{J_m^2}\left\{
\sum_iA_a(U_i)^2N_i(N_i-1)
\right.\right.\nonumber\\
&\left.\left.\hspace{4em}+(\Theta_n-1)\sum_iA_a(U_i)^2N_i^2
\right\}
\right].
\label{eq:spatial-score-variance-proxy}
\end{align}
{
The full normalized score variance is bounded by $Cv_{\mathrm{sp},n,q}$, and
}
\begin{equation}\label{eq:spatial-score-variance-simple}
v_{\mathrm{sp},n,q}
\le CJ_s^{-d}\left\{
\frac{1}{m_n\bar N_nJ_m}
+
\frac{1}{m_{\mathrm{eff},n}J_m^2}
\right\}.
\end{equation}
Moreover,
\begin{equation}\label{eq:spatial-spline-mean-score}
\left\|\frac1{M_{1,n}}\sum_i\omega_{i,J_s,J_m}\right\|_\infty
=
O_p\left[
\sqrt{J_s^{-d}\log p_{M,n}\left\{
\frac{1}{m_n\bar N_nJ_m}
+
\frac{1}{m_{\mathrm{eff},n}J_m^2}
\right\}}
+
\frac{\delta_n^{-d}b_n^d\log p_{M,n}}{m_n}
\right].
\end{equation}
If $\delta_n^{-d}b_n^dJ_m\left\{J_s^d\log\left(K_sK_m\right)/\left[m_n\left(\Theta_n+J_m/\bar N_n\right)\right]\right\}^{1/2}\to0$, then
\begin{equation}\label{eq:spatial-spline-mean-score-reduced}
J_s^dJ_m\left\|\frac1{M_{1,n}}\sum_i\omega_{i,J_s,J_m}\right\|_\infty
=O_p\left[
\sqrt{J_s^d\log p_{M,n}\left\{
\frac{J_m}{m_n\bar N_n}+\frac{1}{m_{\mathrm{eff},n}}
\right\}}
\right].
\end{equation}
\end{lemma}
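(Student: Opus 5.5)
The proof has four parts: the Gram matrix, its inverse, the score variance, and the score maximum. Each follows the stationary and local-kernel lemmas, with the spatial basis $\bA_s$ and the discrete-norm bounds of \Cref{lem:spatial-spline-discrete-norm} replacing the purely temporal calculations.

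\emph{Gram matrix.} For $q=(a,b)$ and $q'=(a',b')$, the entry $(\hat Q_{M,J_s,J_m}-Q_{M,J_s,J_m})_{qq'}$ equals
\[
M_{1,n}^{-1}\sum_{i,j}A_a(U_i)A_{a'}(U_i)\bigl[B_b(T_{ij})B_{b'}(T_{ij})-\E\{B_bB_{b'}(T)\}\bigr].
\]
The summands are independent over $(i,j)$ by \MainCref{ass:time}, bounded by one, and vanish unless the supports overlap. The variance of the sum is at most $CJ_m^{-1}M_{1,n}^{-2}\sum_iN_iA_a(U_i)^2\le CJ_s^{-d}/(M_{1,n}J_m)$ by \eqref{eq:spatial-coordinate-load}. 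Bernstein's inequality and a union bound over the $O(K_sK_m)$ nonzero entries then give the max-norm rate $r_{M,Q,n}$.

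\emph{Inverse.} Because of local support, both Gram matrices have a bounded number of nonzeros per row. Hence $\|\hat Q-Q\|_\infty\le C r_{M,Q,n}$. Combined with $\|Q^{-1}\|_\infty\le CJ_s^dJ_m$ from \Cref{lem:spatial-spline-mean-bias-gram} and the assumption $J_s^dJ_mr_{M,Q,n}\to0$, the Neumann series and the identity $\hat Q^{-1}-Q^{-1}=Q^{-1}(Q-\hat Q)\hat Q^{-1}$ give both inverse bounds, exactly as in \Cref{lem:mean-gram}.

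\emph{Score variance.} Fix $q=(a,b)$ and write $R_{ij}=Y_{ij}-M^\circ(U_i,T_{ij})$. The residual $m(S_i,\cdot)-M^\circ(U_i,\cdot)$ is uniformly bounded by \eqref{eq:spatial-spline-mean-bias} and \MainCref{ass:local-mean}(iv).
\begin{itemize}
\item The same-site diagonal terms contribute $A_a(U_i)^2N_iJ_m^{-1}$.
\item The same-site off-diagonal terms contribute $A_a(U_i)^2N_i(N_i-1)J_m^{-2}$, since $\E B_b(T)\le CJ_m^{-1}$.
\item For $i\ne k$, exogeneity leaves only the latent-field product. Rio's inequality bounds it by $CA_a(U_i)A_a(U_k)N_iN_kJ_m^{-2}\beta_X(\|S_i-S_k\|_2)^{1/2}$.
\end{itemize}
Symmetrizing with $2xy\le x^2+y^2$ and using \eqref{eq:Theta} bounds the cross terms by $(\Theta_n-1)\sum_iA_a^2N_i^2$, which yields $v_{\mathrm{sp},n,q}$. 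Then \MainCref{ass:balanced} and \eqref{eq:spatial-coordinate-load}, which gives $\sum_iA_a(U_i)^2\le Cm_nJ_s^{-d}$, together with $m_n=\Theta_nm_{\mathrm{eff},n}$, give \eqref{eq:spatial-score-variance-simple}.

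\emph{Score maximum.} Apply \Cref{cor:mean-direct-absorption} with $p_n=K_sK_m$, $J=J_m$, $B_q=B_{b,m}$, $w_{iq}=A_a(U_i)/M_{1,n}$ and $d_{iq}=m(S_i,\cdot)-M^\circ(U_i,\cdot)$. The subtracted mean in $S_q$ is exactly $\E_i$, so $S_q$ is the aggregate centered score. The quantities $H_q$ and $F_q$ reproduce $v_{\mathrm{sp},n,q}$, so $V_q\le Cv_{\mathrm{sp},n,q}$. Also $w_*N_*\le C/m_n$ by balanced counts, and this gives \eqref{eq:spatial-spline-mean-score}.

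\emph{Reduced bound.} Multiply \eqref{eq:spatial-spline-mean-score} by $J_s^dJ_m$. The first term becomes the displayed rate in \eqref{eq:spatial-spline-mean-score-reduced}. For the remainder $J_s^dJ_m\delta_n^{-d}b_n^d\log p/m_n$, observe that the main term is at least of order $\sqrt{J_s^d\log p\,(\Theta_n+J_m/\bar N_n)/m_n}$. The ratio of the remainder to this quantity is exactly the expression assumed to tend to zero, so the remainder is of smaller order.

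I expect the main obstacle to be the variance bookkeeping with spatial weights. One must check that the spatial coordinate load $\sum_i\pi_{i,n}A_a(U_i)^2\le CJ_s^{-d}$ enters both the observation-level and the curve-level terms identically, and that the dependence term is not inflated by the overlap of spatial supports. The symmetrization step handles this, since $\Theta_n$ is defined as a uniform row sum over all sites.
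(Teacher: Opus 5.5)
Your proposal is correct and follows the paper's proof essentially step for step. You use Bernstein's inequality with the spatial coordinate load for the Gram entries, the Neumann-series and resolvent identities with $\|Q_{M,J_s,J_m}^{-1}\|_\infty\le CJ_s^dJ_m$ for the inverse, same-site and cross-site Rio bounds with symmetrization for $v_{\mathrm{sp},n,q}$, and the direct-absorption corollary with $w_{iq}=A_a(U_i)/M_{1,n}$ for the maximum. One minor remark: in the reduced bound, your lower bound on the main term is in fact an identity, since $J_m/(m_n\bar N_n)+1/m_{\mathrm{eff},n}=(\Theta_n+J_m/\bar N_n)/m_n$.
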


\begin{proof}
For a nonzero Gram coordinate, the summand is bounded and its time-factor variance is $O(J_m^{-1})$. By \eqref{eq:spatial-coordinate-load}, the weighted sum of the spatial basis values is $O(J_s^{-d})$. Hence a normalized Gram coordinate has variance at most $CJ_s^{-d}/(M_{1,n}J_m)$. Bernstein's inequality \citep[Corollary~2.11]{BoucheronLugosiMassart2013} and a union bound over the $O(p_{M,n})$ nonzero entries give the stated max-norm rate. The fixed number of nonzero entries in each row, \Cref{lem:spatial-spline-mean-bias-gram}, the Neumann-series identity, and the resolvent identity give the inverse bounds.

For one score coordinate $q=(a,b)$, the deterministic residual is uniformly bounded by \Cref{lem:spatial-spline-mean-bias-gram}.  The same-site calculation gives
\[
\Var(\omega_{i,q})
\le CA_a(U_i)^2\left\{\frac{N_i}{J_m}+\frac{N_i(N_i-1)}{J_m^2}\right\}.
\]
For $i\ne k$, only the latent-field cross product remains. Rio's covariance inequality \citep[Theorem~1.1]{Rio1993} gives
\[
|\Cov(\omega_{i,q},\omega_{k,q})|
\le C|A_a(U_i)A_a(U_k)|N_iN_kJ_m^{-2}
\beta_X(\|S_i-S_k\|_2)^{1/2}.
\]
Symmetrization bounds the cross-site sum by
$(\Theta_n-1)\sum_iA_a(U_i)^2N_i^2$.  {
This proves the variance bound with \eqref{eq:spatial-score-variance-proxy}.
}

By \eqref{eq:spatial-coordinate-load},
$\sum_iA_a(U_i)^2N_i\le CM_{1,n}J_s^{-d}$.  Balanced counts then give
$\sum_iA_a(U_i)^2N_i^2\le C\bar N_nM_{1,n}J_s^{-d}$, and the same bound applies to the $N_i(N_i-1)$ sum.  Since $M_{1,n}\asymp m_n\bar N_n$ and $m_n=\Theta_nm_{\mathrm{eff},n}$, \eqref{eq:spatial-score-variance-simple} follows.

{
Finally, apply \Cref{cor:mean-direct-absorption} with $J=J_m$ and
}
\[
w_{i,q}=\frac{A_a(U_i)}{M_{1,n}},\qquad
{B_q}=B_{b,m},\qquad
d_{i,q}(t)=m(S_i,t)-M_{J_s,J_m}^{\circ}(U_i,t).
\]
{
The preceding spatial-load bounds give $H_q\le CJ_s^{-d}/(m_n\bar N_n)$, $F_q\le CJ_s^{-d}/m_n$, and $V_q=v_{\mathrm{sp},n,q}$. Bounded spatial B-splines and balanced counts give $w_*\max_iN_i\le C/m_n$. These bounds yield \eqref{eq:spatial-spline-mean-score}.
}  Multiplication by $J_s^dJ_m$ and the additional condition in the lemma give \eqref{eq:spatial-spline-mean-score-reduced}.
\end{proof}

\clearpage
\section{Proofs of main theorems}

\begin{proof}[Proof of \MainCref{thm:mean-rate}]

By \Cref{lem:mean-proj-bias},
\[
\sup_t|\hat m_{J_m}(t)-m(t)|
\le\sup_t|\hat m_{J_m}(t)-m_{J_m}(t)|+CJ_m^{-q_m^\ast}.
\]
The normal equations and \MainEqref{eq:mean-score-avg} give
\[
\hat\bbeta_{m,J_m}-\bbeta_{m,J_m}
=Q_{m,J_m}^{-1}\frac1{M_{1,n}}\sum_i\xi_{i,J_m}
+(\hat Q_{m,J_m}^{-1}-Q_{m,J_m}^{-1})\frac1{M_{1,n}}\sum_i\xi_{i,J_m}.
\]
Because $\sup_t\|\bB_m(t)\|_1=1$ and $\|Q_{m,J_m}^{-1}\|_\infty\le CJ_m$, \Cref{lem:mean-score-max} yields the leading stochastic bound
\[
O_p\left(
J_m\sqrt{v_{m,n}\log K_m}
+
\frac{J_m\delta_n^{-d}b_n^d\log K_m}{m_n}
\right).
\]
Using \eqref{eq:mean-score-variance-proxy},
\[
J_m^2v_{m,n}
=
\frac{J_m}{M_{1,n}}
+
\frac{M_{2,n}+(\Theta_n-1)(M_{1,n}+M_{2,n})}{M_{1,n}^2},
\]
which is the stochastic term in \MainEqref{eq:mean-main-rate}.  By \Cref{lem:mean-gram} and \eqref{eq:mean-gram-cond},
$\|\hat Q_{m,J_m}^{-1}-Q_{m,J_m}^{-1}\|_\infty=o_p(J_m)$; hence its product with the score is lower order.  Adding the approximation bias proves \MainEqref{eq:mean-main-rate}.

\end{proof}

\begin{proof}[Proof of \MainCref{cor:mean-phase}]

Under the additional condition in \MainCref{cor:mean-phase}, use the score bound in \eqref{eq:mean-score-max-reduced} and the bias bound in \Cref{lem:mean-proj-bias}. The inverse-Gram argument used for \MainCref{thm:mean-rate} then gives \MainEqref{eq:mean-phase-rate}. Under \MainCref{ass:balanced}, suppress logarithmic and lower-order terms. In the sparse time regime, balance
\[
J_m^{-q_m^\ast}\asymp\left(\frac{J_m}{m_n\bar N_n}\right)^{1/2},
\]
which gives $J_m\asymp(m_n\bar N_n)^{1/(2q_m^\ast+1)}$ and rate $(m_n\bar N_n)^{-q_m^\ast/(2q_m^\ast+1)}$.  This component meets the curve-level term $m_{\mathrm{eff},n}^{-1/2}$ when
\[
(m_n\bar N_n)^{-q_m^\ast/(2q_m^\ast+1)}
\asymp m_{\mathrm{eff},n}^{-1/2}.
\]
Since $m_n=\Theta_nm_{\mathrm{eff},n}$, this is equivalent to
$\bar N_n\asymp m_{\mathrm{eff},n}^{1/(2q_m^\ast)}/\Theta_n$.  Above the phase transition boundary, balancing $J_m^{-q_m^\ast}$ with $m_{\mathrm{eff},n}^{-1/2}$ gives $J_m\asymp m_{\mathrm{eff},n}^{1/(2q_m^\ast)}$ and the stated dense-time rate.

\end{proof}

\begin{proof}[Derivation of the local effective-sample-size equivalence]

Under \MainCref{ass:balanced,ass:local-mass},
\begin{align*}
M_{1,n}\left(u_0\right) \asymp \bar N_n\sum_iW_i\left(u_0\right) \asymp \bar N_nm_na_n^d, \sum_iW_i\left(u_0\right)^2N_i^2 \asymp \bar N_n^2\sum_iW_i\left(u_0\right)^2 \asymp \bar N_n^2m_na_n^d.
\end{align*}
Consequently, the definition gives $m_{u_0,\mathrm{eff},n}\asymp m_na_n^d/\Theta_n=m_{\mathrm{eff},n}a_n^d$.

\end{proof}

\begin{proof}[Proof of \MainCref{thm:local-mean-rate}]

By \Cref{lem:local-mean-det},
\[
\sup_t|m_{J_m}^{\mathrm{loc}}(u_0,t)-M(u_0,t)|
\le C\{J_m^{-q_m^\ast}+a_n^{\gamma_u}+L_n^{-1}\}.
\]
The local normal equations give
\begin{align*}
\hat\bbeta_{m,J_m}(u_0)-\bbeta_{m,J_m}^{\mathrm{loc}}(u_0)
&=Q_{m,J_m}^{-1}\frac1{M_{1,n}(u_0)}\sum_i\bar\xi_{i,J_m}(u_0)\\
&\quad+\{\hat Q_{m,J_m}(u_0)^{-1}-Q_{m,J_m}^{-1}\}
\frac1{M_{1,n}(u_0)}\sum_i\bar\xi_{i,J_m}(u_0).
\end{align*}
Since $\sup_t\|\bB_m(t)\|_1=1$ and $\|Q_{m,J_m}^{-1}\|_\infty\le CJ_m$, \eqref{eq:local-mean-score-max} gives
\[
O_p\left[
\sqrt{\log K_m\left\{
\frac{J_m}{m_na_n^d\bar N_n}
+
\frac{1}{m_{\mathrm{eff},n}a_n^d}
\right\}}
+
\frac{J_m\delta_n^{-d}b_n^d\log K_m}{m_na_n^d}
\right].
\]
By \Cref{lem:local-mean-gram} and \eqref{eq:local-mean-gram-cond}, the inverse-Gram difference is $o_p(J_m)$, so its score product is lower order.  Adding the deterministic bias proves \MainEqref{eq:local-mean-rate}; under the additional condition in \MainCref{cor:local-mean-phase}, \eqref{eq:local-mean-score-max-reduced} gives \MainEqref{eq:local-mean-rate-simple}.

\end{proof}

\begin{proof}[Proof of \MainCref{thm:local-mean-uniform}]
Use $\ell_n$ and $\mathcal U_n$ from the theorem, and write $\eta_n=a_n^{2d+2}/J_m$ for the mesh size. Then
\begin{equation*}
|\mathcal U_n|\le C\eta_n^{-d},
\qquad
\log\left\{e+K_m|\mathcal U_n|\right\}\le C\ell_n.
\end{equation*}
The uniform local-mass and balanced-count assumptions imply
$\inf_{v\in\mathcal U_n}m_{v,\mathrm{eff},n} \asymp m_{\mathrm{eff},n}a_n^d$.

{
Apply \Cref{cor:mean-direct-absorption} to the joint coordinates $q=(v,a)$ with $J=J_m$ and
}
\[
w_{i,q}=\frac{W_i(v)}{M_{1,n}(v)},\qquad
{B_q}=B_{a,m},\qquad
d_{i,q}(t)=m(S_i,t)-m_{J_m}^{\mathrm{loc}}(v,t).
\]
The uniform first- and second-order kernel-mass bounds in \MainCref{thm:local-mean-uniform}, together with balanced counts, give
{
$w_*\max_iN_i\le C(m_na_n^d)^{-1}$ uniformly over the grid. The bounds for $H_q$, $F_q$ and $V_q$ in \Cref{lem:local-mean-score} hold uniformly as well. Applying the corollary once with $p_n=K_m|\mathcal U_n|$ gives
}
\begin{align*}
\max_{v\in\mathcal U_n}
\left\|\frac{1}{M_{1,n}(v)}\sum_i\bar\xi_{i,J_m}(v)\right\|_\infty
=O_p\left[
\sqrt{\ell_n\left\{
\frac{1}{m_na_n^d\bar N_nJ_m}
+
\frac{1}{m_{\mathrm{eff},n}a_n^dJ_m^2}
\right\}}
+
\frac{\delta_n^{-d}b_n^d\ell_n}{m_na_n^d}
\right].
\end{align*}

Bernstein's inequality \citep[Corollary~2.11]{BoucheronLugosiMassart2013} and a union bound for the empirical Gram entries over the same grid give
\begin{equation*}
\max_{v\in\mathcal U_n}
\mnorm{\hat Q_{m,J_m}\left(v\right)-Q_{m,J_m}}
=
O_p\!\left(
\sqrt{\frac{\ell_n}{\left\{\inf_{u\in\mathcal U_\circ}M_{1,n}\left(u\right)\right\}J_m}}
+
\frac{\ell_n}{\inf_{u\in\mathcal U_\circ}M_{1,n}\left(u\right)}
\right).
\end{equation*}
Because these Gram matrices have fixed bandwidth, the uniform Gram condition in \MainCref{thm:local-mean-uniform} and the Neumann-series argument imply $\max_{v\in\mathcal U_n} \oinf{\hat Q_{m,J_m}\left(v\right)^{-1}}=O_p\left(J_m\right)$, with the inverse-Gram product lower order uniformly on the grid.  Combining this fact, the preceding score bound, and \Cref{lem:local-mean-det} proves the asserted rate on $\mathcal U_n$.

It remains to pass from the grid to all $u\in\mathcal U_\circ$.  For each $u$, choose $u^\sharp\in\mathcal U_n$ with $\|u-u^\sharp\|_2\le C\eta_n$.  Uniform local mass and balanced counts give $\inf_{u\in\mathcal U_\circ}M_{1,n}\left(u\right)\ge ca_n^dM_{1,n}$, while the Lipschitz property of $K$ yields $\max_i|W_i\left(u\right)-W_i\left(u^\sharp\right)|\le C\eta_n/a_n$.  Moreover, \MainCref{ass:tails} and Markov's inequality give $M_{1,n}^{-1}\sum_{i=1}^{m_n}\sum_{j=1}^{N_i}|Y_{ij}|=O_p\left(1\right)$.  Applying these bounds to the unnormalized Gram and moment sums, and also to the change in their normalizing local masses, gives
\begin{equation*}
\sup_{u\in\mathcal U_\circ}
\mnorm{\hat Q_{m,J_m}\left(u\right)-\hat Q_{m,J_m}\left(u^\sharp\right)}
=O_p\left(a_n/J_m\right),
\end{equation*}
\begin{equation*}
\sup_{u\in\mathcal U_\circ}
\left\|\hat r_{m,J_m}\left(u\right)-\hat r_{m,J_m}\left(u^\sharp\right)\right\|_\infty
=O_p\left(a_n/J_m\right).
\end{equation*}
Because every time-spline Gram matrix has fixed bandwidth, the first display also holds in the $\ell_\infty$ operator norm.  Indeed, the first normalization term is bounded by $C\eta_n/a_n^{d+1}$, and the largest term for the moment-vector normalization is $O_p\left(\eta_n/a_n^{2d+1}\right)$; the definition $\eta_n=a_n^{2d+2}/J_m$ makes both $O_p\left(a_n/J_m\right)$ or smaller.

The Gram interpolation bound and the grid inverse bound imply
\begin{equation*}
\sup_{u\in\mathcal U_\circ}
\oinf{\hat Q_{m,J_m}\left(u\right)^{-1}}=O_p\left(J_m\right).
\end{equation*}
Let $\rho_n$ denote the {stochastic terms} on the right-hand side of \MainEqref{eq:local-mean-uniform}.  The grid normal equations give $\max_{v\in\mathcal U_n}\|\hat\bbeta_{m,J_m}\left(v\right)\|_\infty =O_p\left(1+\rho_n\right)$. Indeed, the population local-projection coefficients are uniformly bounded because their moment vectors are $O\left(J_m^{-1}\right)$ coordinatewise and $\oinf{Q_{m,J_m}^{-1}}=O\left(J_m\right)$; the grid score bound controls the remaining coefficient error.  The identity
\begin{align*}
\hat\bbeta_{m,J_m}\left(u\right)-\hat\bbeta_{m,J_m}\left(u^\sharp\right)
&=
\hat Q_{m,J_m}\left(u\right)^{-1}\left[
\hat r_{m,J_m}\left(u\right)-\hat r_{m,J_m}\left(u^\sharp\right)
\right.
\\
&\left.\quad+
\left\{\hat Q_{m,J_m}\left(u^\sharp\right)-\hat Q_{m,J_m}\left(u\right)\right\}
\hat\bbeta_{m,J_m}\left(u^\sharp\right)
\right]
\end{align*}
therefore yields
\begin{equation*}
\sup_{u\in\mathcal U_\circ}\sup_{t\in\left[0,1\right]}
|\hat M_{J_m}\left(u,t\right)-\hat M_{J_m}\left(u^\sharp,t\right)|
=O_p\left\{a_n\left(1+\rho_n\right)\right\}.
\end{equation*}
Since $a_n\le a_n^{\gamma_u}$ and $a_n\rho_n\le\rho_n$, this interpolation term is absorbed by the displayed rate.  Finally, $\sup_{t\in\left[0,1\right]}|M\left(u,t\right)-M\left(u^\sharp,t\right)| \le C \eta_n^{\gamma_u}\le Ca_n^{\gamma_u}$ by \MainCref{ass:local-mean}(i).  Combining the grid and interpolation bounds proves \MainEqref{eq:local-mean-uniform}.

It remains to verify the inverse-Gram assertion away from the net.  The grid Gram bound, fixed bandwidth, and the interpolation bound above imply
\begin{align*}
\sup_{u\in\mathcal U_\circ}
\oinf{\hat Q_{m,J_m}\left(u\right)-Q_{m,J_m}}
&=o_p\left(J_m^{-1}\right),
\end{align*}
because the uniform Gram condition controls the grid deviation and $a_n/J_m=o\left(J_m^{-1}\right)$.  The resolvent identity and the uniform inverse bound therefore give
\begin{align*}
\sup_{u\in\mathcal U_\circ}
\oinf{\hat Q_{m,J_m}\left(u\right)^{-1}-Q_{m,J_m}^{-1}}
&=o_p\left(J_m\right).
\end{align*}
Define $S_n\left(u\right):=\hat r_{m,J_m}\left(u\right)-\hat Q_{m,J_m}\left(u\right)\bbeta_{m,J_m}^{\mathrm{loc}}\left(u\right)$.  This is the normalized local score.  For $\lambda_i\left(u\right):=W_i\left(u\right)N_i/M_{1,n}\left(u\right)$, the normalized-weight calculation used above gives $\sup_u\sum_i|\lambda_i\left(u\right)-\lambda_i\left(u^\sharp\right)|\le C\eta_n/a_n^{d+1}=O\left(a_n/J_m\right)$.  Since the population moment coordinates are $O\left(J_m^{-1}\right)$ and $\oinf{Q_{m,J_m}^{-1}}=O\left(J_m\right)$, it follows that
\begin{align*}
\sup_{u\in\mathcal U_\circ}
\left\|\bbeta_{m,J_m}^{\mathrm{loc}}\left(u\right)-
\bbeta_{m,J_m}^{\mathrm{loc}}\left(u^\sharp\right)\right\|_\infty
&=O\left(a_n/J_m\right).
\end{align*}
Together with those interpolation bounds and the grid score bound, this yields
\begin{align*}
\sup_{u\in\mathcal U_\circ}\left\|S_n\left(u\right)\right\|_\infty
&=O_p\left\{\left(\rho_n+a_n\right)/J_m\right\}.
\end{align*}
Since $\sup_t\|\bB_m\left(t\right)\|_1=1$, the off-net inverse-Gram product is consequently $o_p\left(\rho_n+a_n\right)$, which is absorbed by \MainEqref{eq:local-mean-uniform} because $a_n\le a_n^{\gamma_u}$.

\end{proof}

\begin{proof}[Proof of \MainCref{cor:local-mean-phase}]

Suppress logarithmic and lower-order terms.  The observation-limited balance
\[
J_m^{-q_m^\ast}\asymp\left(\frac{J_m}{m_na_n^d\bar N_n}\right)^{1/2}
\]
gives $J_m\asymp(m_na_n^d\bar N_n)^{1/(2q_m^\ast+1)}$ and rate $(m_na_n^d\bar N_n)^{-q_m^\ast/(2q_m^\ast+1)}$.  Equating this rate with the curve-level term $(m_{\mathrm{eff},n}a_n^d)^{-1/2}$ and using $m_n=\Theta_nm_{\mathrm{eff},n}$ gives the boundary in \MainEqref{eq:local-mean-phase-boundary}.  Above it, balancing the time-spline bias with the curve-level term gives $J_m\asymp(m_{\mathrm{eff},n}a_n^d)^{1/(2q_m^\ast)}$.

\end{proof}

\begin{proof}[Proof of \MainCref{thm:spatial-spline-mean-rate}]

By \Cref{lem:spatial-spline-mean-bias-gram},
\[
\sup_{u,t}|M_{J_s,J_m}^{\circ}(u,t)-M(u,t)|
\le C\{J_s^{-q_u^\ast}+J_m^{-q_m^\ast}+L_n^{-1}\}.
\]
The population normal equation and the least-squares normal equations give
\[
\hat\btheta-\btheta^\circ
=Q^{-1}\frac1{M_{1,n}}\sum_i\omega_i
+(\hat Q^{-1}-Q^{-1})\frac1{M_{1,n}}\sum_i\omega_i.
\]
Since tensor-product B-splines form a partition of unity and $\|Q^{-1}\|_\infty\le CJ_s^dJ_m$, \eqref{eq:spatial-spline-mean-score} gives the leading stochastic term
\[
O_p\left[
\sqrt{J_s^d\log(K_sK_m)\left\{
\frac{J_m}{m_n\bar N_n}+\frac{1}{m_{\mathrm{eff},n}}
\right\}}
+
\frac{J_s^dJ_m\delta_n^{-d}b_n^d\log(K_sK_m)}{m_n}
\right].
\]
The inverse-difference bound in \Cref{lem:spatial-spline-mean-score} is $o_p(J_s^dJ_m)$ under the displayed Gram condition, so its score product is lower order.  Adding the deterministic approximation proves \MainEqref{eq:spatial-spline-mean-rate}.  Under the additional condition in \MainCref{cor:spatial-spline-mean-phase}, \eqref{eq:spatial-spline-mean-score-reduced} gives the same bound without the final {remainder term}.  Boundary knots on the full cube avoid any interior restriction.

\end{proof}

\begin{proof}[Proof of \MainCref{cor:spatial-spline-mean-phase}]

Fix $J_s$ and suppress logarithmic and lower-order terms.  In the observation-limited regime, balance
\[
J_m^{-q_m^\ast}\asymp
\left(\frac{J_s^dJ_m}{m_n\bar N_n}\right)^{1/2},
\]
which gives $J_m\asymp(m_n\bar N_n/J_s^d)^{1/(2q_m^\ast+1)}$ and the stated rate.  Equating it with the curve-level term $(J_s^d/m_{\mathrm{eff},n})^{1/2}$ yields
$\bar N_n\asymp(m_{\mathrm{eff},n}/J_s^d)^{1/(2q_m^\ast)}/\Theta_n$.  Above the boundary, balancing the time bias with the curve-level term gives $J_m\asymp(m_{\mathrm{eff},n}/J_s^d)^{1/(2q_m^\ast)}$.

For joint sparse-time minimization, balance
\[
J_s^{-q_u^\ast},\qquad J_m^{-q_m^\ast},\qquad
\left(\frac{J_s^dJ_m}{m_n\bar N_n}\right)^{1/2}.
\]
Writing the common rate as $R$ gives $J_s\asymp R^{-1/q_u^\ast}$ and $J_m\asymp R^{-1/q_m^\ast}$, hence
$R^{2+d/q_u^\ast+1/q_m^\ast}\asymp(m_n\bar N_n)^{-1}$, which is the stated anisotropic exponent.  In the dense-time regime, balancing $J_s^{-q_u^\ast}$ with $(J_s^d/m_{\mathrm{eff},n})^{1/2}$ gives the stated spatial rate.

\end{proof}

\begin{proof}[Derivation of the local-kernel and spatial-spline comparison]

Set $a_n\asymp J_s^{-1}$.  Apart from logarithmic factors, the squared stochastic terms in \MainEqref{eq:local-mean-rate} become
\begin{align*}
\frac{J_m}{m_na_n^d\bar N_n}+\frac{1}{m_{\mathrm{eff},n}a_n^d}
&\asymp
J_s^d\left(\frac{J_m}{m_n\bar N_n}+\frac{1}{m_{\mathrm{eff},n}}\right).
\end{align*}
For the uniform local rate, $\ell_n=\log K_m+\log\left(a_n^{-1}\right)\asymp\log\left(K_sK_m\right)$ because $K_s\asymp J_s^d$.  {
The comparison concerns the leading stochastic terms after the respective remainder conditions are imposed; $b_n$ depends on the coordinate dimension and need not have the same order for the two procedures.
}  The local phase transition boundary likewise becomes
$\bar N_n\asymp\left(m_{\mathrm{eff},n}/J_s^d\right)^{1/(2q_m^\ast)}/\Theta_n$, the spatial-spline phase transition boundary.  The {
leading stochastic terms
} therefore agree at matched spatial resolution, while the spatial approximation terms remain $a_n^{\gamma_u}$ and $J_s^{-q_u^\ast}$, respectively.

\end{proof}

\end{document}